\documentclass[11pt,oneside]{amsart}
\usepackage{amsmath}
\usepackage{amssymb}
\usepackage{eucal}
\usepackage{amsthm}
\usepackage{amscd}
\usepackage{hyperref}
\usepackage{soul}
\usepackage{url}
\usepackage{skull}
\usepackage{adjustbox}
\usepackage{mathrsfs}

\usepackage{tikz}
\usetikzlibrary{shapes.geometric, arrows}
\usetikzlibrary{shapes,arrows}
\usepackage[latin1]{inputenc}
\usepackage{tikz}
\usetikzlibrary{shapes,arrows}
\usetikzlibrary{shapes.multipart}
\usepackage{verbatim}

\usepackage{todonotes}

\setlength{\textwidth}{6in} \setlength{\textheight}{8in}
\setlength{\oddsidemargin}{0.1in}

\usepackage{amssymb}
\usepackage[]{amsmath, amsthm, amsfonts,graphicx, amscd,}
\usepackage[all,cmtip]{xy}
\input amssym.def \input amssym
\usepackage{forest}
\usepackage{mdframed}
\usepackage{framed}

\interfootnotelinepenalty=10000

\begin{document}

\newtheorem{thm}{Theorem}[section]
\newtheorem{lthm}{Lost Theorem}
\renewcommand{\thelthm}{of Chazy \& Painlev\'e}
\newtheorem*{srp}{The Shelah reflection principle}
\newtheorem{claim}[thm]{Claim}
\newtheorem {fact}[thm]{Fact}
\newtheorem{con}[thm]{Conjecture}
\newtheorem*{corrB}{Corollary B$^*$}

\newtheorem*{thmstar}{Theorem}
\newtheorem*{prob}{Problem}
\newtheorem{prop}[thm]{Proposition}
\newtheorem{cor}[thm]{Corollary}
\newtheorem*{propstar}{Proposition}
\newtheorem {lem}[thm]{Lemma}
\newtheorem*{lemstar}{Lemma}
\newtheorem{conj}[thm]{Conjecture}
\newtheorem{question}[thm]{Question}
\newtheorem*{questar}{Question}
\newtheorem{ques}[thm]{Question}
\newtheorem*{conjstar}{Conjecture}
\newtheorem{fct}[thm]{Fact}
\theoremstyle{remark}
\newtheorem{rem}[thm]{Remark}
\newtheorem{exmp}[thm]{Example}
\newtheorem{cond}[thm]{Condition}
\newtheorem{np*}{Non-Proof}
\newtheorem*{remstar}{Remark}
\theoremstyle{definition}
\newtheorem{defn}[thm]{Definition}
\newtheorem*{defnstar}{Definition}
\newtheorem{exam}[thm]{Example}
\newtheorem*{examstar}{Example}
\newtheorem{assump}[thm]{Assumption}
\newtheorem{Thm}[thm]{Theorem}

\theoremstyle{plain}
\newtheorem{thmx}{Theorem}
\renewcommand{\thethmx}{\Alph{thmx}}
\newtheorem{corx}[thmx]{Corollary}

\newtheorem{innerimportantthm}{Theorem}
\newenvironment{importantthm}[1]
  {\renewcommand\theinnerimportantthm{#1}\innerimportantthm}
  {\endinnerimportantthm}
  
\newtheorem{innerimportantcor}{Corollary}
\newenvironment{importantcor}[1]
  {\renewcommand\theinnerimportantcor{#1}\innerimportantcor}
  {\endinnerimportantcor}

\def \ta {\tau_{\mathcal{D}/\Delta}}
\def \D {\Delta}
\def \DD {\mathcal D}

\newcommand{\gen}[1]{\left\langle#1\right\rangle}

\newcommand{\pd}[2]{\frac{\partial #1}{\partial #2}}

\newcommand{\codim}{\text{codim}\,}
\newcommand{\td}{\text{tr.deg.}}
\newcommand{\pp}{\partial }
\newcommand{\pdtwo}[2]{\frac{\partial^2 #1}{\partial #2^2}}
\newcommand{\od}[2]{\frac{d #1}{d #2}}
\def\Ind{\setbox0=\hbox{$x$}\kern\wd0\hbox to 0pt{\hss$\mid$\hss} \lower.9\ht0\hbox to 0pt{\hss$\smile$\hss}\kern\wd0}
\def\Notind{\setbox0=\hbox{$x$}\kern\wd0\hbox to 0pt{\mathchardef \nn=12854\hss$\nn$\kern1.4\wd0\hss}\hbox to 0pt{\hss$\mid$\hss}\lower.9\ht0 \hbox to 0pt{\hss$\smile$\hss}\kern\wd0}
\def\ind{\mathop{\mathpalette\Ind{}}}
\def\nind{\mathop{\mathpalette\Notind{}}}
\numberwithin{equation}{section}

\def\id{\operatorname{id}}
\def\Frac{\operatorname{Frac}}
\def\Const{\operatorname{Const}}
\def\spec{\operatorname{Spec}}
\def\span{\operatorname{span}}
\def\exc{\operatorname{Exc}}
\def\Div{\operatorname{Div}}
\def\cl{\operatorname{cl}}
\def\mer{\operatorname{mer}}
\def\trdeg{\operatorname{trdeg}}
\def\ord{\operatorname{ord}}

\newcommand{\m}{\mathbb }
\newcommand{\mf}{\mathfrak }
\newcommand{\is}{^{p^ {-\infty}}}
\newcommand{\QQ}{\mathbb Q}
\newcommand{\fh}{\mathfrak h}
\newcommand{\CC}{\mathbb C}
\newcommand{\RR}{\mathbb R}
\newcommand{\ZZ}{\mathbb Z}
\newcommand{\tp}{\operatorname{tp}}
\newcommand{\SL}{\operatorname{SL}}
\subjclass[2010]{11F03, 12H05, 03C60}

\title[A differential approach to Ax-Schanuel, I]{A differential approach to Ax-Schanuel, I}

\author[D. Bl\'azquez-Sanz]{David Bl\'azquez-Sanz}
\address{David Bl\'azquez-Sanz, Universidad Nacional de Colombia - Sede Medell\'in, Facultad de Ciencias, Departamento de Matem\'aticas, Colombia}
\email{ dblazquezsa@unal.edu.co}

\author[G. Casale]{Guy Casale}
\address{Guy Casale, Univ Rennes, CNRS, IRMAR-UMR 6625, F-35000 Rennes, France}
\email{guy.casale@univ-rennes1.fr}

\author[J. Freitag]{James Freitag}
\address{James Freitag, University of Illinois Chicago, Department of Mathematics, Statistics,
and Computer Science, 851 S. Morgan Street, Chicago, IL, USA, 60607-7045.}
\email{jfreitag@uic.edu}

\author[J. Nagloo]{Joel Nagloo}
\address{Joel Nagloo, University of Illinois Chicago, Department of Mathematics, Statistics,
and Computer Science, 851 S. Morgan Street, Chicago, IL, USA, 60607-7045.}
\email{jnagloo@uic.edu}

\thanks{G. Casale is partially funded by CAPES-COFECUB project MA 932/19 and Centre Henri Lebesgue, program ANR-11-LABX-0020-0. J. Freitag is partially supported by NSF CAREER award 1945251. J. Nagloo is partially supported by NSF grant DMS-2203508 and DMS-2348885. D. Bl\'azquez-Sanz is supported by UNAL project Hermes-60920 during the final stages of this research. Some of the work in this paper was completed while J. Nagloo was supported by an AMS Centennial Fellowship.}
\date{\today}
\maketitle

\maketitle

\begin{abstract} In this paper, we prove several Ax-Schanuel type results for uniformizers of geometric structures; our general results describe the differential algebraic relations between the solutions of the partial differential equations satisfied by the uniformizers. In particular, we give a proof of the full Ax-Schanuel Theorem with derivatives for some uniformizers of simple projective structure on curves including unifomizers of any Fuchsian group of the first kind and any genus.

Combining our techniques with those of Ax, we give a strong Ax-Schanuel result for the combination of the derivatives of the $j$-function and the exponential function. In the general setting of Shimura varieties, we obtain an Ax-Schanuel theorem for the derivatives of uniformizing maps.

Our techniques combine tools from differential geometry, differential algebra and the model theory of differentially closed fields. 
\end{abstract}

\section{Introduction}

In this paper we use techniques from differential geometry, differential algebra and the model theory of differentially closed fields to prove several functional transcendence results. Our main result is stated in the context of a $G$-principal bundle $\pi:P \to Y$ with $Y$ a {irreducible} complex algebraic variety and $G$ a complex algebraic group. We explain the notation of our main theorem after its statement: 


\begin{thmx}\label{thmA}
Let $\nabla$ be a  (possibly irregular) $G$-principal flat connection on $P \to Y$ with sparse Galois group ${\rm Gal}(\nabla)=G$.  Let $V$ be an irreducible algebraic subvariety of $P$, $\mathcal{L}\subset P$  a $\nabla$-horizontal leaf and $\mathcal{V} \subset V\cap \mathcal L$ an irreducible analytic subvariety of $P$. If $\dim V < \dim \mathcal V + \dim G$ 
then the projection of $\mathcal V$ in $Y$ is contained in a $\nabla$-special subvariety.
\end{thmx} 

By a {\it $\nabla$-special subvariety} $X \subset Y$, we mean an irreducible maximal subvariety such that ${\rm Gal}(\nabla|_X)$ is a strict subgroup of $G={\rm Gal}(\nabla)$. 
 
The hypothesis  ${\rm Gal}(\nabla)=G$ means that the horizontal leaves are Zariski dense in $P$. Hence $\nabla$-special subvarieties $X \subset Y$ are those (maximal) subvarieties where the horizontal leaves of the restriction $\nabla_{|X}$ are not Zariski dense in $P_{|X}$. {\it Sparse groups} are introduced and defined on page \pageref{def:NZDS} in Definition \ref{def:NZDS}. Roughly speaking, these are groups for which the proper analytic subgroups are never Zariski dense. For instance, semi-simple groups are sparse, while algebraic tori are not.

\begin{rem}
As $\mathcal L \subset P$ is an immersed analytic submanifold (but not an analytic subvariety) of codimension $q = \dim G$, it might intersect algebraic subvarieties along a codimension $q$ immersed analytic submanifold. The condition in Theorem \ref{thmA} means that $V$ is atypical with respect to $\nabla$. 
\end{rem}

Theorem \ref{thmA} can be used to prove most of the Ax-Schanuel type resuts recently obtained in the context of variations of mixed $\mathbb Z$-Hodge structure.

In this context, the connection $\nabla$ is singular-regular and using a theorem of Andr\'e-Deligne, Chiu proved in \cite[Theorem 3.2]{Chiu}, that the $\nabla$-special subvarieties are then \emph{weakly special} ones. Then from Theorem \ref{thmA} one can recover the main result of Bakker and Tsimermann \cite[Theorem 1.1]{BakkerTsimermann2} on the Ax-Schanuel conjecture for variations of Hodge structures. Similarly, using Theorem A, one can recover the main result of Gao and Klinger \cite[Theorem 1.1]{GaoKlinger}, and Chiu \cite[Theorem 1.2]{Chiu0} on the Ax-Schanuel conjecture for variations of mixed Hodge structures. In fact, Chiu derives the more general variants of these results including derivatives \cite[Theorem 1.2 and Corollary 1.4]{Chiu} using Theorem \ref{thmA} together with the results of \cite{BBT}.

Theorem \ref{thmA} also implies many recently proved functional transcendence results for uniformizing functions associated with discrete groups, many fitting into a generalization of the setting of \cite{Scanlon}. To that end, we give a consequence of Theorem A suited to covering maps by defining the notion of a $(G, G/B)$-structure on an algebraic variety $Y$ in Section 3.2. 



For instance, these geometric structures includes  algebraic varieties that are quotients of a bounded symmetric domain $\Omega$ by a lattice $\Gamma\subset G(\m R)$ in its group of holomorphic automorphisms. It also includes the differential equations satisfied by conformal mappings of circular polygons \cite[Chapter 4]{circpoly}. In any case, we will show that attached to any $(G, G/B)$-structure is a $G$-principal connection $\nabla$ and so are able to apply Theorem \ref{thmA} and obtain the following result.

\begin{corx}\label{corB}
Let $\upsilon$ be the uniformization of an irreducible $(G,G/B)$-structure on an algebraic variety $Y$ with $G$ sparse.
Assume $W \subset G/B \times Y$ is an irreducible algebraic subvariety intersecting the graph of $\upsilon$. Let $U$ be an irreducible complex analytic component of this intersection such that
$$
\dim W < \dim U + \dim Y.
$$
Then the projection of $U$ to $Y$ is contained in a $\nabla$-special subvariety of $Y$.
\end{corx}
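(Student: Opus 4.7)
The proof is a direct reduction to Theorem \ref{thmA} using the geometric dictionary supplied by the $(G,G/B)$-structure. By the discussion preceding the statement, any such structure on $Y$ determines a $G$-principal bundle $\pi\colon P\to Y$ equipped with a $G$-principal flat connection $\nabla$; the uniformization $\upsilon$ is obtained by integrating $\nabla$, and irreducibility of the structure, combined with the sparseness of $G$, ensures ${\rm Gal}(\nabla)=G$. Moreover, the graph of $\upsilon$ inside $G/B\times Y$ is, locally on $Y$, the image of a horizontal leaf $\mathcal L\subset P$ under the composition
$$\Phi\colon\; P\longrightarrow P/B\longrightarrow G/B\times Y,$$
where the second arrow is furnished by the developing map associated with the $(G,G/B)$-structure.

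First I would pull back $W$ along $\Phi$, setting $V:=\Phi^{-1}(W)\subset P$. Since $\Phi$ has fibres of dimension $\dim B$, the preimage $V$ is irreducible of dimension $\dim W+\dim B$. The leaf $\mathcal L$ projects isomorphically onto the graph of $\upsilon$ under $\Phi$, so the given component $U$ of the intersection of $W$ with the graph of $\upsilon$ lifts uniquely to a component of $V\cap\mathcal L$ of the same dimension $\dim U$.

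Second, a $(G,G/B)$-structure forces $\dim Y=\dim G/B$, so $\dim G=\dim Y+\dim B$. The hypothesis $\dim W<\dim U+\dim Y$ therefore rewrites as
$$\dim V \;=\; \dim W+\dim B \;<\; \dim U+\dim Y+\dim B \;=\; \dim(V\cap\mathcal L)+\dim G,$$
which is exactly the hypothesis of Theorem \ref{thmA}. Applying it gives that the projection of $V\cap\mathcal L$ to $Y$ is contained in a $\nabla$-special subvariety of $Y$; since $\Phi$ commutes with the projections to $Y$, this projection coincides with the projection of $U$ to $Y$, yielding the desired conclusion.

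The step I expect to be the most delicate is making precise the geometric dictionary between the multi-valued uniformization $\upsilon$ and the horizontal foliation of $\nabla$ on $P$, and verifying that the notion of $\nabla$-special subvariety used in Theorem \ref{thmA} agrees with the one being claimed here for $(G,G/B)$-structures on $Y$. Once this dictionary is in place---essentially a matter of tracking the $B$-reduction together with the developing map, and unwinding what ${\rm Gal}(\nabla|_X)\subsetneq G$ means on the base---the dimension arithmetic above completes the reduction without further work.
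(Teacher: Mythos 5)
Your argument is correct and is essentially the paper's own proof: both pull $W$ back along the $0$-jet projection from the $G$-principal bundle $\mathscr{C}$ to $G/B\times Y$ (your $\Phi$), raising the dimension by $\dim B$, and then conclude via Theorem \ref{thmA} using $\dim G=\dim G/B+\dim B=\dim Y+\dim B$. The one small imprecision is that $\Phi$ is the $0$-jet map and does not literally factor through the naive quotient $P/B$ --- the fibre of $j^0$ over a point $(y,x)$ is an orbit of the stabiliser of $x$ in $G$, a conjugate of $B$ depending on $x$ --- but this plays no role beyond supplying the fibre dimension $\dim B$, so the reduction to Theorem \ref{thmA} stands as written.
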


The classical and most studied examples of such structures come from Shimura varieties which we describe next, but our results apply to more general situations. Take $G$ to be a connected semi-simple algebraic $\mathbb{Q}$-group. Take $K$ to be a maximal compact subgroup of $G(\m R)$ and suppose that  $\Omega=G(\mathbb{R})/K$ is a bounded symmetric domain (this is quite rare). Then the compact dual $\check\Omega$ of $\Omega$ is given as the quotient $\check\Omega=G(\mathbb{C})/ \mathcal P$ for a parabolic subgroup $\mathcal P$ of $G$. This quotient $\check\Omega$ is a homogeneous projective variety and $\Omega$ is a semi-algebraic subset (if we assume $K\subset \mathcal P$). Given an arithmetic lattice $\Gamma\subset G(\m Q)$, the analytic quotient $Y:=\Gamma \backslash \Omega=\Gamma \backslash G(\mathbb{R})/K$ has the structure of an algebraic variety. The above data defines the pure connected Shimura variety, $Y$. As detailed in Subsection \ref{sectionUniformization}, a $(G, G/B)$-structure on $Y$ where $B=\mathcal P$ can be taken to be the system of partial differential equations satisfied by a uniformization function $\upsilon:\Omega\rightarrow Y :=\Gamma \backslash \Omega$. This system or its solution set is denoted by $\mathscr Y$.

There, the Ax-Schanuel theorem follows from Corollary \ref{corB} and allows one to recover the main result of Mok, Pila and Tsimerman \cite[Theorem 1.1]{ASmpt}\footnote{Additional consequences of the Corollary \ref{corB} are detailed in Section 4.}:

\begin{corrB}\label{corB1}
Let $v: \Omega \rightarrow Y = \Gamma \backslash \Omega$ be the uniformization function of a Shimura variety $Y= \Gamma \backslash G( \m R) /K$. Assume $W \subset G/B \times Y$ is an irreducible algebraic subvariety intersecting the graph of $v$. Let $U$ be an irreducible complex analytic component of this intersection such that
$$
\dim W < \dim U + \dim Y.
$$
Then the projection of $U$ to $Y$ is contained in a weakly special subvariety of $Y$.
\end{corrB}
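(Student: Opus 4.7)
The plan is to obtain this corollary by directly specializing Corollary~\ref{corB} to the Shimura setting, using \cite[Theorem~3.2]{Chiu} to translate $\nabla$-special subvarieties into weakly special ones. So the proof is essentially a matter of checking hypotheses and invoking previously established results.

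First I would verify that a Shimura variety $Y = \Gamma\backslash G(\mathbb{R})/K$ carries a natural irreducible $(G,G/B)$-structure whose uniformization is precisely $v:\Omega \to Y$. The compact dual $\check{\Omega} = G(\mathbb{C})/B$ plays the role of $G/B$, and, as recalled in the introduction and developed in Subsection~\ref{sectionUniformization}, the system of partial differential equations satisfied by $v$ equips $Y$ with exactly such a structure. Next I would verify the sparseness hypothesis: for a Shimura datum, $G$ is a connected semi-simple algebraic $\mathbb{Q}$-group, and the paper explicitly records that semi-simple groups are sparse (see the remarks following Theorem~\ref{thmA} and Definition~\ref{def:NZDS}). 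Thus both hypotheses of Corollary~\ref{corB} are met with $X = Y$.

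Now I would apply Corollary~\ref{corB} to the given $W \subset G/B \times Y$ and the irreducible component $U$ of $W \cap \text{graph}(v)$ satisfying $\dim W < \dim U + \dim Y$. The conclusion is that the projection of $U$ to $Y$ lies inside some $\nabla$-special subvariety $X_0 \subset Y$. Enlarging $X_0$ if necessary, I may assume $X_0$ is a \emph{maximal} $\nabla$-special subvariety containing this projection (this enlargement terminates because $Y$ is Noetherian and each step strictly increases the $\nabla$-special subvariety). Finally I invoke \cite[Theorem~3.2]{Chiu}: in the setting of (mixed) Shimura varieties, every maximal $\nabla$-special subvariety is weakly special. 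Hence $X_0$ is weakly special and contains the projection of $U$, as required.

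The main (and essentially only) obstacle is the translation between $\nabla$-special and weakly special subvarieties, since the definition of $\nabla$-special is internal to the differential-Galois setup of this paper while weakly special is a Hodge-theoretic notion intrinsic to Shimura varieties. That identification is not obvious, but it is exactly the content of Chiu's theorem, so after plugging everything in, the deduction from Corollary~\ref{corB} is immediate.
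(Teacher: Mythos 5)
Your proposal takes essentially the same route as the paper: specialize Corollary~\ref{corB} to the Shimura setting (the $(G,G/B)$-structure comes from the uniformizing PDE as in Subsection~\ref{sectionUniformization}, sparseness holds because $G$ is semi-simple), and then convert the resulting $\nabla$-special subvariety into a weakly special one via Chiu's theorem. That is exactly the one-line deduction the paper records, both in the introduction and again in Subsection~\ref{comparisonsection}.

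One small wrinkle: your enlargement-to-a-maximal-$\nabla$-special-subvariety step is justified by appealing to Noetherianity of $Y$, but Noetherianity gives the \emph{descending} chain condition on Zariski-closed subsets, not the ascending one, so that justification as written does not hold (an ascending chain of closed subsets of $Y$ need not stabilize). This is easily repaired and is in fact unnecessary: the paper's own phrasing of Chiu's result in Subsection~\ref{comparisonsection} is that \emph{every} $\nabla$-special subvariety is contained in a proper weakly special subvariety, which applies directly to your $X_0$ with no maximality needed. With that adjustment, your argument is the paper's argument.
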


If we assume in addition that the $(G, G/B)$-structure is {\em simple} (see Definition \ref{simple}), a natural assumption, then we are also able to obtain an Ax-Schanuel Theorem (including with derivatives) for products of $Y$. This gives a result which applies to more general situations than existing Ax-Schanuel type results (e.g. the main theorem of \cite{ASmpt}):

\begin{thmx}\label{thmC}
Let $(Y, \mathscr Y)$ be a simple $(G,G/B)$-structure on $Y$, $\hat p_1, \ldots , \hat p_n$ be $n$ formal parameterizations of (formal) neighborhoods of points $p_1, \ldots, p_n$ in $G/B$ and $\upsilon_1, \ldots , \upsilon_n$ be solutions of $\mathscr Y$ defined in a neighborhood of $p_1, \ldots, p_n$ respectively.
If $$
{\rm tr.deg.}_{\mathbb C} \mathbb C\left( \hat p_i , (\partial^\alpha \upsilon_i)(\hat p_i) : 1 \leq i \leq n,\; \alpha \in \mathbb N^{\dim G/B} \right) < \dim Y + n\dim G$$
then there exist $i<j$ such that 
$$
{\rm tr.deg.}_{\mathbb C} \mathbb C (\upsilon_i(\hat p_i), \upsilon_j(\hat p_j)) = {\rm tr.deg.}_{\mathbb C} \mathbb C(\upsilon_i(\hat p_i) ) = {\rm tr.deg.}_{\mathbb C} \mathbb C(\upsilon_j(\hat p_j)) = \dim G/B.$$
\end{thmx}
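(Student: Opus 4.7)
The plan is to apply Theorem \ref{thmA} to the $n$-fold product connection. First I would form the product principal bundle $P^n \to Y^n$ equipped with the product flat connection $\nabla^{\boxtimes n}$, whose structure group $G^n$ is sparse because $G$ is (any proper analytic subgroup of a product of sparse groups projects into a proper analytic subgroup of some factor, and hence cannot be Zariski dense). Each solution $\upsilon_i$ of $\mathscr Y$ lifts via $\nabla$ to a formal horizontal section $\ell_i$ of $P$ near $p_i$, and the product $\mathcal L := \mathcal L_1 \times \cdots \times \mathcal L_n$ of the corresponding formal horizontal leaves is a formal horizontal leaf of $\nabla^{\boxtimes n}$ in $P^n$.

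Next I would translate the transcendence-degree hypothesis into a dimension inequality suitable for Theorem \ref{thmA}. Because $\nabla$ is flat, every higher derivative $(\partial^\alpha \upsilon_i)(\hat p_i)$ is algebraic over the first-jet data $\ell_i(\hat p_i) \in P$, so the field generated by the $\hat p_i$ and all jets of the $\upsilon_i$ coincides (up to algebraic extension) with the function field of the Zariski closure $V$ of the formal leaf $\mathcal L$ in $P^n$. The hypothesis $\dim V < \dim Y + n \dim G$, together with the elementary bound $\dim(V \cap \mathcal L) \geq \dim \mathcal L \geq \dim Y$, yields the defect $\dim V - \dim(V \cap \mathcal L) < n \dim G = \dim G^n$ required by Theorem \ref{thmA}. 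Applying it produces a $\nabla^{\boxtimes n}$-special subvariety $X \subsetneq Y^n$ containing the projection of $V \cap \mathcal L$.

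The crucial step is then to exploit the simplicity of $(Y, \mathscr Y)$ to classify such $\nabla^{\boxtimes n}$-special subvarieties of $Y^n$. For a simple $(G, G/B)$-structure, the only proper algebraic subgroups of $G^n$ arising as Galois groups on subvarieties must pair up two factors via the diagonal embedding $G \hookrightarrow G \times G$ (possibly composed with a $G$-automorphism); equivalently, the restriction of $\nabla^{\boxtimes n}$ to $X$ forces the projection of $X$ to some pair $Y_i \times Y_j$ to lie in the graph of a $G$-equivariant algebraic correspondence $C \subset Y \times Y$ of codimension $\dim G/B$. Establishing this classification is the main obstacle of the proof: one must rigorously show that simplicity precludes every other proper reduction of structure group in the product, such as higher-multiplicity diagonals or exotic embeddings of $G$ into $G^n$.

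Once such a pairwise correspondence is extracted, it furnishes an algebraic relation of codimension $\dim G/B$ between $\upsilon_i(\hat p_i)$ and $\upsilon_j(\hat p_j)$ in $Y \times Y$, which forces $\trdeg_\CC \CC(\upsilon_i(\hat p_i), \upsilon_j(\hat p_j)) = \dim G/B$. Meanwhile, simplicity guarantees that each uniformizer $\upsilon_i$ has Zariski-dense image in $Y$, giving $\trdeg_\CC \CC(\upsilon_i(\hat p_i)) = \trdeg_\CC \CC(\upsilon_j(\hat p_j)) = \dim G/B = \dim Y$. This yields the full conclusion.
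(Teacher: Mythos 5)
You have the right strategy, and you correctly identify the crux of the matter, but you do not close the gap you flag. You form the product connection $\nabla^{\boxtimes n}$, note $G^n$ is sparse, and reduce the transcendence hypothesis to the dimension inequality of Theorem \ref{thmA} — all of this matches the paper. The problem is the structural step: you write that ``establishing this classification is the main obstacle of the proof: one must rigorously show that simplicity precludes every other proper reduction of structure group in the product,'' and then you assert the conclusion without proving it. That is exactly the content the paper supplies via a three-lemma chain inside Theorem \ref{Binary}. Concretely, one first uses Lemma \ref{lm:total_galois} (the Galois group of $\mathscr C$ equals $G$, so $\mathscr C$ admits no proper $\mathcal D$-subvariety) to force the Galois group of the restriction to the special subvariety to project \emph{onto} each factor $G_i$; only then can one invoke the Goursat--Kolchin lemma to conclude that the subgroup is, up to centers, the graph of an isomorphism between two of the factors; and finally Lemma \ref{lm:projection} isolates a pair $(i,j)$ so that the relation lives in $Y_i \times Y_j$. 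Without the ``projects onto each factor'' step, Goursat--Kolchin does not apply, and without Goursat--Kolchin your assertion that only pairwise diagonals can occur has no justification. So this is a genuine gap, not merely a stylistic omission.

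A second, smaller gap: in your final paragraph you conclude that the pairwise relation is a \emph{correspondence}, hence yields $\mathrm{tr.deg.}_{\mathbb C}\,\mathbb C(\upsilon_i(\hat p_i),\upsilon_j(\hat p_j)) = \dim G/B$. But a priori the Galois-theoretic argument only produces a proper special subvariety $X_{ij}\subset Y\times Y$ with both projections dominant and Galois group a twisted diagonal modulo centers; to upgrade this to a finite-to-finite correspondence of dimension $\dim Y$ one must rule out positive-dimensional fibers, which is where condition (3) of Definition \ref{simple} (finitely many maximal irreducible $Z$-special subvarieties, $Z$ the center) enters, as in part (c) of Theorem \ref{Binary}. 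Your remark that ``simplicity guarantees that each uniformizer has Zariski-dense image'' does not by itself deliver this. You should make explicit that the hypothesis also requires the formal parameterizations $\hat p_i$ to have full rank $\dim G/B$ in order for the claimed equalities with $\dim G/B$ to hold; otherwise the transcendence degrees can drop below $\dim Y$ even when the correspondence is present.
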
 

By construction $
{\rm tr.deg.}_{\mathbb C} \mathbb C\left( \hat p_i , (\partial^\alpha \upsilon_i)(\hat p_i) : 1 \leq i \leq n,\; \alpha \in \mathbb N^{\dim G/B} \right) \leq n(\dim Y + \dim G)$ so the hypothesis imposes $(n-1)\dim Y +1$ additional algebraic relations. Theorem \ref{thmC} does not (in full generality) give any details about the kinds of special subvarieties or correspondences that can occur - this is a problem we address in several concrete situations of interest in this paper and in additional cases in the sequel. Nevertheless, it will be crucial in giving a model theoretic analysis of the relevant partial differential equations solving several cases of a problem posed by Scanlon \cite{Scanlon}. For example, as a consequence we are able to show that the sets $\mathscr{Y}$ defined by the equations (along with some natural inequalities), in a differentially closed field, are strongly minimal and geometrically trivial. 

Building on this model theoretic analysis, we are also able to give a complete analysis of algebraic relations in the case of hyperbolic curves. Let $\Gamma\subset {\rm PSL}_2(\mathbb{R})$ be a Fuchsian group of the first kind and let $j_{\Gamma}$ be a uniformizing function for $\Gamma$. Notice here that $G={\rm PSL}_2(\mathbb C)$, $\Omega=\mathbb{H}$ and $\check\Omega=\mathbb{CP}_1$. Let $\hat p_1,\ldots,\hat p_n$ be formal parameterizations of neighborhoods of points $p_1, \ldots, p_n$ in $\m H$. We write $\hat p_i(s_1, \ldots, s_\ell)$ for these non constant formal power series in $\ell$ variables. We prove the Ax-Schanuel Theorem with derivatives for $j_{\Gamma}$:

\begin{thmx}\label{thmd}
Assume that $\hat p_1,\ldots,\hat p_n$ are geodesically independent, namely $\hat p_i$ is nonconstant for $i =1, \ldots , n$ and there are no relations of the form $\hat p_i = \gamma \hat p_j$ for $i \neq j$, $i,j \in \{1, \ldots , n \}$ and $\gamma$ is an element of  ${\rm Comm}(\Gamma)$, the commensurator of $\Gamma$. Then
$${\rm tr.deg.}_{\mathbb{C}}\mathbb{C}(\hat p_1,j_{\Gamma}(\hat p_1),j'_{\Gamma}(\hat p_1),j''_{\Gamma}(\hat p_1),\ldots,\hat p_n,j_{\Gamma}(\hat p_n),j'_{\Gamma}(\hat p_n),j''_{\Gamma}(\hat p_n))\geq 3n + {\rm rank}\left(\frac{\partial \hat p_j}{\partial s_i}\right).$$
\end{thmx}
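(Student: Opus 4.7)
The plan is to combine Theorem~\ref{thmC} applied to the $(\text{PSL}_2,\m{CP}^1)$-structure on $Y=\Gamma\backslash\m H$ with the strong minimality and geometric triviality of the solution set $\mathscr Y$ derived from it. Since $j_\Gamma$ satisfies a third-order Schwarzian-type ODE coming from $\mathscr Y$, every $j_\Gamma^{(k)}$ with $k\ge 3$ is rational in $z,j_\Gamma,j_\Gamma',j_\Gamma''$, so the transcendence degree to bound below equals
$$\operatorname{tr.deg.}_{\m C}\m C\bigl(\hat p_i,(\partial^\alpha j_\Gamma)(\hat p_i):1\le i\le n,\,\alpha\in\m N\bigr).$$
Because $\hat p_1,\ldots,\hat p_n\in\m C[[s_1,\ldots,s_\ell]]$ have Jacobian of rank $r$, we have $\operatorname{tr.deg.}_{\m C}\m C(\hat p_1,\ldots,\hat p_n)=r$, and it suffices to show that the triples $(j_\Gamma(\hat p_i),j_\Gamma'(\hat p_i),j_\Gamma''(\hat p_i))$ contribute the maximal $3n$ transcendence degrees over $\m C(\hat p_1,\ldots,\hat p_n)$.

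\emph{Pairwise-to-full reduction via $\mathscr Y$.} Each $j_\Gamma(\hat p_i)$ is a realization of the set $\mathscr Y$, which by the discussion following Theorem~\ref{thmC} is a strongly minimal, geometrically trivial type in the appropriate differentially closed field; a single generic realization contributes transcendence degree exactly $3$ (the order of the defining ODE, encoding the fibre datum $\dim G=3$). Geometric triviality implies that, over the base $\m C(\hat p_1,\ldots,\hat p_n)$, the tuple $(j_\Gamma(\hat p_1),\ldots,j_\Gamma(\hat p_n))$ is Morley-independent as soon as it is pairwise independent. The desired $3n$ estimate thus reduces to the $n=2$ case: for $i\neq j$ with $\hat p_i,\hat p_j$ nonconstant, the points $j_\Gamma(\hat p_i)$ and $j_\Gamma(\hat p_j)$ should be independent in $\mathscr Y$.

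\emph{Pairwise case and commensurator obstruction.} A nontrivial dependence between $j_\Gamma(\hat p_i)$ and $j_\Gamma(\hat p_j)$ in $\mathscr Y$ corresponds, via Theorem~\ref{thmC} applied with $n=2$ (or directly Theorem~\ref{thmA} on the underlying $\text{PSL}_2$-bundle), to an algebraic correspondence on $Y\times Y$ whose pullback to $\m H\times\m H$ contains the graph of a holomorphic map sending $\hat p_j$ to $\hat p_i$. By the classical identification of algebraic correspondences on a Fuchsian quotient $Y$ with Hecke correspondences---via Margulis' arithmeticity in the arithmetic case, and by rigidity/analytic continuation in the non-arithmetic case---any such map is the action of some $\gamma\in\text{Comm}(\Gamma)$. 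Hence $\hat p_i=\gamma\hat p_j$, contradicting the geodesic independence hypothesis.

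The main obstacle will be the rigorous execution of the pairwise-to-full step, since Theorem~\ref{thmC} by itself yields only the cruder bound $\dim Y+n\dim G=1+3n$ and leaves the sharpening to $3n+r$ for arbitrary parameter rank to the model-theoretic analysis of $\mathscr Y$: geometric triviality is what actually licenses the reduction from full-tuple independence to pairwise independence over the base $\m C(\hat p_1,\ldots,\hat p_n)$, and strong minimality is what forces the pairwise dichotomy. The final conversion of algebraic correspondences on Fuchsian quotients into elements of $\text{Comm}(\Gamma)$ (as opposed to mere geodesic identifications of their lifts to $\m H$) is the other delicate ingredient; combining it cleanly with the multivariable setup is what makes the conclusion work in the full generality stated.
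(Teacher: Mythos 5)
Your overall architecture tracks the paper's, but the allocation of difficulty is inverted and two of the key steps are either unjustified or appeal to machinery that does not actually deliver what is claimed.

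The pairwise-to-full reduction is not the delicate step. The paper achieves it directly from Theorem~\ref{Binary} (specialized to curves as Theorem~\ref{MainSL2}), which asserts that whenever the Zariski closure $V$ of the parameterized leaf has $\dim V < 3n + {\rm rank}(\ker\Omega|_V)$ there already exist a pair $i<j$ and a correspondence $X_{ij}\subset Y_i\times Y_j$ carrying a $\nabla$-preserving bundle isomorphism. Geometric triviality of $\mathscr Y$ is the model-theoretic shadow of this, but using it as you propose requires care: the strong minimality and triviality proved in Theorem~\ref{Urank1} live over a base $K\subset\m C(t)^{\rm alg}$ with the single derivation $\partial_t$ satisfying $\partial_t t=1$, whereas $j_\Gamma(\hat p_i)\in\m C[[s_1,\ldots,s_\ell]]$ is viewed with the derivations $\partial_{s_j}$, under which it does not satisfy the Schwarzian ODE at all (only a chain-rule distortion of it). You cannot simply declare the tuple $(j_\Gamma(\hat p_1),\ldots,j_\Gamma(\hat p_n))$ to be a Morley sequence in $\mathscr Y$ over $\m C(\hat p_1,\ldots,\hat p_n)$ without first fixing a derivation in which the objects actually realize $\mathscr Y$.

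The genuinely hard step, which your proposal elides, is upgrading the pairwise correspondence on $Y\times Y$ supplied by Theorem~\ref{thmC}/\ref{MainSL2} to the relation $\hat p_j = h\,\hat p_i$ for some $h\in{\rm PSL}_2(\m C)$. Theorem~\ref{thmC} concludes only that $\upsilon(\hat p_i)$ and $\upsilon(\hat p_j)$ are interalgebraic over $\m C$; it says nothing about the parameters. The paper's Theorem~\ref{Ax-Schaneul-curve} supplies exactly this missing implication, and the mechanism is not geometric triviality but a different piece of differential algebra: normalize by introducing the derivation $\delta_1$ with $\delta_1\hat p_i = 1$, so $\upsilon(\hat p_i)$ becomes a literal solution of the Schwarzian equation, and then observe that $\upsilon(\hat p_j)$ satisfies a ``fiber'' equation $S_{\delta_1}(y)+2R(y)(\delta_1 y)^2 = S_{\delta_1}(\hat p_j)$. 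Strong minimality of these fiber equations (Lemma~\ref{AfterFreSca}, a Freitag--Scanlon style argument) then forces $S_{\delta_1}(\hat p_j)\in\m C$ and ultimately $S_{\delta_1}(\hat p_j)=0$, whence $\hat p_j = h\hat p_i$. Your sentence ``to an algebraic correspondence on $Y\times Y$ whose pullback to $\m H\times\m H$ contains the graph of a holomorphic map sending $\hat p_j$ to $\hat p_i$'' presupposes the conclusion of this step rather than establishing it.

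Finally, the conversion of the resulting $h\in{\rm PSL}_2(\m C)$ into an element of ${\rm Comm}(\Gamma)$ does not use Margulis arithmeticity or rigidity, arithmetic case or not. The paper's argument (the unnumbered Proposition before Corollary~\ref{independence}, citing \cite{CasFreNag}) is elementary: writing $\upsilon_i=\upsilon(g_i t)$ via Seidenberg and observing that the polynomial relation $P(\upsilon(g_1 t),\upsilon(g_2 t))=0$ has finitely many branches forces the double coset $\Gamma\,g_2 g_1^{-1}\,\Gamma$ to meet finitely many left $\Gamma$-cosets, which is precisely the definition of $g_2 g_1^{-1}\in{\rm Comm}(\Gamma)$. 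Margulis's theorem (arithmeticity $\Leftrightarrow$ $[{\rm Comm}(\Gamma):\Gamma]=\infty$) only enters the paper in a side corollary characterizing $\omega$-categoricity, not in the commensurator identification.
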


Notice that the statement given above, in the spirit of Ax's original paper, is both more general and stronger than the one found in \cite{AXS} whereby $\hat p_1,\ldots,\hat p_n$ are assumed to be the coordinate functions of some complex {\em analytic} subvariety of some open subset $D\subset \m H^n$.  For instance, Theorem \ref{thmd} strengthens the Ax-Lindemann-Weierstrass theorem of \cite{CasFreNag}, and also generalizes the setup by dropping the assumption that the quotient is genus zero. The technique of the proof is also completely different. 

Finally, using Theorem \ref{thmd} and the techniques developed in the paper, we obtain an Ax-Schanuel Theorem with derivatives for the exponential function together with any uniformizing function $j_{\Gamma}$ of some Fuchsian group $\Gamma\subset {\rm PSL}_2(\mathbb{R})$ of the first kind. 

\begin{thmx}\label{ThmE}
Let $\hat p_1,\ldots,\hat p_{k+\ell}$ be formal parameterizations (in variables $s_1, \ldots, s_m$ as above) of neighborhoods of points $p_1, \ldots, p_{k+\ell}$ in $\m H$. If 
$${\rm tr.deg.}_{\mathbb C}\mathbb C \left(\hat p_1, \ldots \hat p_{k+\ell}, \exp(\hat p_1), \ldots\exp(\hat p_k), j_{\Gamma} (\hat p_{k+1}), \ldots j_{\Gamma} ''(\hat p_{k+\ell})\right) < k + 3\ell+{\rm rank}\left(\frac{\partial \hat p_j}{\partial s_i}\right)$$ 
then 
\begin{itemize}
\item there exists $n\in \mathbb Z^k\setminus \{0\}$ such that $\sum_{i=1}^k n_i \hat p_i \in \mathbb C$; or
\item there exist $k+1\leq i < j\leq k+\ell$ and $\gamma \in {\rm Comm}(\Gamma)$ such that $\hat p_i = \gamma\hat p_j$.
\end{itemize}
\end{thmx}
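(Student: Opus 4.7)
The plan is to combine Theorem~\ref{thmd} with Ax's classical theorem for the exponential, leveraging the model-theoretic analysis of $\mathscr Y$ developed earlier in the paper. The crucial input is that for a simple $(G,G/B)$-structure such as the one associated to $j_\Gamma$, the system $\mathscr Y$ defines a set which, in a suitable differentially closed field, is strongly minimal and geometrically trivial (as noted after Theorem~\ref{thmC}).

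First, reduce to the generic case where neither dichotomy alternative holds: the $\hat p_1, \ldots, \hat p_k$ admit no non-trivial $\mathbb Z$-linear combination in $\mathbb{C}$, and $\hat p_{k+1}, \ldots, \hat p_{k+l}$ are geodesically independent. Writing $r_1$ and $r_2$ for the ranks of the Jacobians $(\partial \hat p_j/\partial s_i)$ restricted respectively to the blocks $j \leq k$ and $j > k$, Ax's theorem (in its multi-derivation form) gives
\begin{equation*}
{\rm tr.deg.}_{\mathbb C}\, \mathbb{C}\bigl(\hat p_1, \ldots, \hat p_k, \exp(\hat p_1), \ldots, \exp(\hat p_k)\bigr) \geq k + r_1,
\end{equation*}
while Theorem~\ref{thmd} applied to $\hat p_{k+1}, \ldots, \hat p_{k+l}$ yields
\begin{equation*}
{\rm tr.deg.}_{\mathbb C}\, \mathbb{C}\bigl(\hat p_{k+1}, \ldots, \hat p_{k+l}, j_\Gamma(\hat p_{k+1}), \ldots, j_\Gamma''(\hat p_{k+l})\bigr) \geq 3l + r_2.
\end{equation*}

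The heart of the argument is to show that these two bounds combine additively, i.e., that the fields $F_1$ and $F_2$ they respectively generate are algebraically independent over $\mathbb{C}$. To that end, embed the whole configuration into a differentially closed field $\mathcal U$ equipped with the commuting derivations $\partial/\partial s_1, \ldots, \partial/\partial s_\ell$. In $\mathcal U$, the solution set of $\mathscr Y$ is strongly minimal and geometrically trivial, whereas the solution set of the exponential equation carries the group law of $\mathbb{G}_m$ and is strongly minimal but \emph{not} geometrically trivial. Since a geometrically trivial strongly minimal set is automatically orthogonal to any strongly minimal set carrying a non-trivial group structure, $\mathscr Y$ and the exponential locus are orthogonal, and this orthogonality yields the algebraic independence of $F_1$ and $F_2$ over $\mathbb{C}$. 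Using the elementary bound $r \leq r_1 + r_2$ for the full Jacobian rank $r$, we obtain
\begin{equation*}
{\rm tr.deg.}_{\mathbb C}\, \mathbb{C}(F_1, F_2) \geq k + 3l + r_1 + r_2 \geq k + 3l + r,
\end{equation*}
contradicting the hypothesis and forcing one of the two dichotomy alternatives.

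The main obstacle is this additivity step. It demands that one carefully embed the formal power series data into $\mathcal U$ without losing the Jacobian rank information, verify that the tuples $(\hat p_i, \exp(\hat p_i))$ and $(\hat p_j, j_\Gamma(\hat p_j), j_\Gamma'(\hat p_j), j_\Gamma''(\hat p_j))$ are generic enough on their respective strongly minimal sets so that orthogonality applies, and confirm that the model-theoretic independence translates into additivity of transcendence degree over $\mathbb{C}$. The companion inequality $r \leq r_1 + r_2$ is immediate from block-column decomposition of the Jacobian.
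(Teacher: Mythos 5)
There is a genuine gap in the heart of your argument: the claim that $F_1$ and $F_2$ are algebraically independent over $\mathbb{C}$ is false in general, and the dichotomy hypotheses do not exclude the configurations where it fails. The two alternatives you rule out each involve only one of the two blocks of parameterizations: the integer-linear-combination alternative concerns $\hat p_1,\ldots,\hat p_k$, while the $\operatorname{Comm}(\Gamma)$-correspondence alternative concerns $\hat p_{k+1},\ldots,\hat p_{k+l}$. Nothing prevents a relation \emph{across} the two blocks — for instance, take $k=l=1$ and $\hat p_1 = \hat p_2 = s_1$. Neither alternative holds, yet $F_1 = \mathbb{C}(s_1, e^{s_1})$ and $F_2 = \mathbb{C}(s_1, j_\Gamma(s_1), j_\Gamma'(s_1), j_\Gamma''(s_1))$ visibly both contain $s_1$, so they are not algebraically independent over $\mathbb{C}$. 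Indeed $\operatorname{tr.deg.}_{\mathbb{C}}\mathbb{C}(F_1,F_2) = 5 < 6 = \operatorname{tr.deg.}_\mathbb{C} F_1 + \operatorname{tr.deg.}_\mathbb{C} F_2$. The theorem's conclusion $\geq k+3l+r = 5$ still holds here (since $r=1 < 2 = r_1 + r_2$), so the slack in $r \leq r_1 + r_2$ masks the problem in this example, but the additivity step as written is unsound and cannot be invoked as the load-bearing move.

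The orthogonality intuition is the right one, but it must be applied over the base $K_0 = \mathbb{C}(\hat p_1,\ldots,\hat p_{k+l})$, not over $\mathbb{C}$. What one actually needs is $\operatorname{tr.deg.}_{K_0} K_0(\exp(\hat p_1),\ldots,\exp(\hat p_k)) = k$, that $\operatorname{tr.deg.}_{K_0} K_0(j_\Gamma(\hat p_{k+1}),\ldots,j_\Gamma''(\hat p_{k+l})) = 3l$, and independence of these two extensions over $K_0$, which combined with $\operatorname{tr.deg.}_\mathbb{C} K_0 = r$ yields the bound. But $K_0$ may strictly contain $\mathbb{C}(\hat p_1,\ldots,\hat p_k)$ (or the corresponding field for the second block), so Ax's theorem and Theorem~\ref{thmd} in the form you cite give degrees over smaller fields and do not directly deliver the needed equalities over $K_0$. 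One has to redo the strong-minimality and forking-calculus arguments over this enlarged base, and that is essentially what replaces your one-line orthogonality claim. By contrast, the paper sidesteps all of this: it applies Theorem~\ref{thmA} once to the product connection on the bundle with group $\mathbb{G}_a^k \times \mathrm{PSL}_2(\mathbb{C})^\ell$ (this group is sparse even though $\mathbb{G}_a$ is not simple), obtains a single unified dichotomy with coefficients in $\mathbb{C}^k$ and $\mathrm{PGL}_2(\mathbb{C})$, and only then uses Ax's result on logarithmic differential forms to force $n \in \mathbb{Z}^k$ and the relation into $\mathbb{C}$, and Theorem~\ref{thmd} to force $\gamma \in \operatorname{Comm}(\Gamma)$. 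This top-down route avoids having to control cross-block dependencies from scratch.
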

In forthcoming work, van Hille, Jones, Kirby, and Speissegger use Theorem \ref{ThmE} to show that there is a Pfaffian chain $f_1, \ldots f_{\ell} : (0,1) \rightarrow \m R$ such that the theory of the first order structure $(\m R , + , \cdot , f_1, \ldots, f_{\ell} )$ is not model complete. The question of the existence of such a chain was an old open question in the area of o-minimality. This is part of a growing connection between differential algebraic properties of analytic functions and o-minimality (cf. \cite{notpfaff} and also \cite{notpfaff1} where Theorem \ref{ThmE} is used to show that the $j$-function is not Pfaffian over the real exponential field). 

\subsection{Applications of Ax-Schanuel theorems} 
Over the last decade, functional transcendence results, often in the form of the Ax-Lindemann-Weierstrass theorem type results for certain analytic functions have played a key role in the class of diophantine problems known as \emph{special points} problems or problems of \emph{unlikely intersections}. See for instance, \cite{Pila2, PTsurface, TsAOAg, fresca, CasFreNag}. The Ax-Schanuel theorem generalizes the Ax-Lindemann-Weierstrass. In the setting of pure Shimura varieties, the Ax-Schanuel theorem of \cite{ASmpt} has recently been applied to certain diophantine problems \cite{daw2018applications}. Various Ax-Schanuel results have also been applied to cases of the Zilber-Pink conjecture \cite{AslanyanZP, ZPabel}. 

Over the past several years, in a series of works Aslanyan, and later Aslanyan, Kirby and Eterovi\'c \cite{Aslanyan0, Aslanyan, Aslanyan1, Aslanyan2, Aslanyan3} develop the connection between Ax-Schanuel transcendence statements and the existential closedness of certain reducts of differentially closed fields related to equations satisfied by the $j$-function. This series of work builds on the earlier program of Kirby, Zilber and others mainly around the exponential function, see e.g. \cite{KZ}. We expect our results to contribute nontrivially to this line of work. The earlier work on the exponential function utilizes the transcendence results of Ax \cite{Ax71}, where a differential algebraic proof of the functional version of Schanuel's conjecture is given. Intermediate differential algebraic results from Ax's work are utilized in the program studying existential closedness results around the exponential function. A different approach was required in \cite{Aslanyan0, Aslanyan, Aslanyan1, Aslanyan2, Aslanyan3} for studying the $j$-function, in part since the intermediate results of the functional transcendence results of \cite{AXS} take place in the o-minimal rather than abstract differential setting. For instance, the motivation for a differential algebraic proof of Ax-Schanuel results and this issue is pointed out specifically following Theorem 1.3 of \cite{Aslanyan1} and in Section 4.4 of \cite{Aslanyan3}. Our results open up the possibility of adapting a similar approach to existential closedness around more general automorphic functions since the general technique of our proof is differential algebraic along lines similar to Ax's work. Besides this issue, it is expected that our generalizations of the Ax-Schanuel results of \cite{AXS} can be used to establish existential closedness results for more general automorphic functions beyond the modular $j$-function. 

\subsubsection*{Work of Dogra}
After the initial appearance of our preprint, Dogra \cite{Dogra} proved diophantine results regarding the set of \emph{low rank} points in fibre powers of families of curves. His work uses Theorem \ref{thmA} (along with some refinements) extensively (e.g. \cite[Lemma 11, Lemma 12, Theorem 2, Proposition 6, Proposition 9, Corollary 4, Corollary 5]{Dogra}). Dogra's applications are a specific realization of part of our motivation for pursuing Ax-Schanuel type functional transcendence results in a general differential-geometric/differential-algebraic setting. 

\subsubsection*{Work of Bakker and Tsimerman}
Recently, Bakker and Tsimerman have given a proof of a geometric version of Andr\'e's generalization of the Grothendieck period conjecture \cite{BakkerTsimermann1}. Their Theorem 1.1 is stated in different language - using Nori motives, but they explain how it is equivalent to an Ax-Schanuel type statement \cite[Theorem 1.3]{BakkerTsimermann1}. They give a short proof of their Ax-Schanuel result using Theorem \ref{thmA}.

\subsubsection*{Work of Pila and Scanlon} Recently, Pila and Scanlon \cite{PilaScanlon} proved function field versions of the Zilber-Pink conjectures for varieties supporting a variation of Hodge structures. Using these results, they show that the differential equations associated with Shimura varieties give examples of minimal (sometimes strongly minimal) types with trivial forking geometry extending some of the results in Section 4.

\subsection{Organization of the paper} The paper is organized as follows. In Section 2 we give the necessary background in the Cartan approach to the study of linear differential equations. In particular, we recall the definition (and basic properties) of a $G$-principal flat connection and its associated connection form. In Section 3 we prove Theorem \ref{thmA} and derive some of its corollaries. We also introduce the idea of geometric structures (or $(G,G/B)$ structures) and apply Theorem \ref{thmA} in this setting to obtain Corollary \ref{corB} . In Section 4 we recall Scanlon's work on covering maps, show that they are part of the formalism of geometric structures and detail the intersection of our work with other similar work in the literature. In section 5 we use Theorem \ref{thmA} to study products of geometric structures (Theorem \ref{thmC}) and use those to give a model theoretic study of the $(G,G/B)$ structures. Section 6 and 7 are devoted to proving Theorem \ref{thmd}, i.e., the full Ax-Schanuel Theorem with derivatives in the case of hyperbolic curves. Finally, we prove Theorem \ref{ThmE}, the Ax-Schanuel theorem with derivatives for the combined case of the exponential function and any uniformizer of an hyperbolic curve, in Section 8.

\subsection*{Acknowledgements}
The main ideas behind this work were developed at the School of Mathematics at the Institute for Advanced Study as a part of the 2019 Summer Collaborators Program. We thank the Institute for its generous support and for providing an excellent working environment. The authors also thank Peter Sarnak for useful conversations during our time at the IAS. During the preparation of this manuscript, the authors were also supported by the American Institute of Mathematics, through their SQUARE program. We thank AIM for the productive atmosphere for pursuing mathematics.  We also thank the anonymous referee for important comments and suggestions. Finally, we would like to express our gratitude to B\'elinda C.Z., Kate F. and Sharonne C. for their support, understanding, and infinite patience during the work and preparation of this article.

\section{The Cartan approach to linear differential equations}\label{cartan}

In this section, we set up some notation and conventions about principal connections.
Algebraic principal bundles were defined in \cite{serre} and most of the usual construction of differential geometry can be adapted to the algebraic framework under suitable hypotheses, see \cite[appendix A]{bost}. The notion of connection form of a principal connection  does not seem to have been used in the algebraic context. 

In the $\mathscr C^{\infty}$ category, a complete reference for the definition and the properties of the connection form is Kobayashi-Nomizu \cite{kobayashi} or Sharpe's book \cite{Sh}. Throughout, we will be working over the field of complex numbers $\mathbb C$. Analytic functions or manifolds mean holomorphic.  

\subsection{Principal connection} 

Let $G$ be an algebraic group, $Y$ a smooth irreducible algebraic variety and $\pi\colon P\to Y$ a principal bundle modeled over $G$, {\it i.e.} endowed with a (right) action of $G$, denoted by $R$ or by $\cdot$, that induces an isomorphism
$$P \times G \overset{\sim}{\to} P \underset{Y}{\times} P, \quad 
(p,g) \mapsto (p,p\cdot g) = (p,R_g(p)).$$
The fibers of $\pi$ are principal homogeneous $G$-spaces. The election of a point $p$ in a fiber $P_y = \pi^{-1}(y)$ induces an isomorphism of $G$-spaces,
$$
G\xrightarrow{\sim} P_y, \quad g\mapsto p\cdot g,
$$
and an isomorphism of groups,
$$
G\xrightarrow{\sim} {\rm Aut}_G(P_y),\quad g\mapsto \sigma \text{ with } \sigma(p\cdot h) = p\cdot gh
$$ 
between $G$ and the group  ${\rm Aut}_G(P_y)$ of $G$-equivariant automorphisms of $P_y$. Note that this pair of isomorphisms conjugate the left action of $G$ on itself with the action of ${\rm Aut}_G(P_y)$ on $P_y$. 
A \emph{gauge transformation} of $P$ is a $G$-equivariant map $F\colon P \to P$ such that $\pi \circ F = \pi$. That means that for each fiber $F|_{P_y}\in {\rm Aut}_G(P_y)$. \\

We define the \emph{vertical bundle} $T(P/Y)$ as the kernel of $d\pi$, it is a subbundle of $TP$. An algebraic \emph{connection} is a section $\nabla$ of the exact sequence,
$$
0 \to T(P/Y) \to TP \to TY \underset{Y}{\times} P \to 0.
$$
Thus, it is a map
$$
\nabla \colon TY \underset{Y}{\times} P  \to TP ,\quad (v,p) \mapsto \nabla_{v,p},
$$
satisfying $d\pi(\nabla_{v,p}) = v$. 

The image $\nabla(TY\times_Y P)\subset TP$ is a \emph{distribution of vector fields
\footnote{Let $D \subset TP$ be a subset of the tangent bundle of a smooth manifold $P$ such that for each $p \in P$, the fiber above $p$, $D_p \subseteq T_p P$ is a $d$-dimensional subspace. Further, suppose that for any point $p$ there is an Zariski open neighborhood $U$ of $p$ such that for any $z \in U$, we have independent regular vector fields $X_1 (z), \ldots , X_d (z)$ whose span is $D_z$.} 
of rank $\dim Y$ on $P$}
, the so-called $\nabla$-horizontal distribution $\mathcal H_{\nabla}$. We have a canonical decomposition of the tangent bundle $TP = T(P/Y)\oplus {\mathcal H}_\nabla$ as the direct sum of its vertical and $\nabla$-horizontal subbundles. \\

A connection induces a $\nabla$-horizontal lift operator of vector fields on $Y$ to $P$,
$$\nabla \colon \mathfrak X_Y \to \mathfrak \pi_*\mathfrak X_P ; \;\;\;v \mapsto \nabla_v \text{ with } (\nabla_v)(p) = \nabla_{v(\pi(p)),p}$$
that lifts vector fields in an open subset $ U\subset Y$ up to $\nabla$-horizontal vector fields in $\pi^{-1}(U)\subset P$.

This operator is $\mathcal O_X$-linear. By abuse of notation, this lift operator is represented by the same symbol $\nabla$. It completely determines the connection. Note that usually, the symbol $\nabla$ is used to denote the associated covariant derivative. The horizontal lift of rational vector fields on $Y$ defines a map ${\rm D} : \mathbb C(P) \to \mathbb C(P)\otimes_{\mathbb C(Y)} \Omega^1(Y)$ where $\Omega^1(Y)$ is the $\mathbb C(Y)$-vector space of rational $1$-forms. Such a ${\rm D}$ extends the differential structure of $\mathbb C(Y)$ given by the exterior derivative, ${\rm d}: \mathbb C(Y) \to \Omega^1(Y)$, and satisfies Leibniz rule :  $ {\rm D}(ab) = a{\rm D}(b) + b{\rm D}(a)$.\\

We say that the connection $\nabla$ is principal if it is $G$-equivariant: for all $(v,p) \in TY \times_Y P$  and $g \in G$ we have $\nabla_{v,R_g(p)} = dR_g(\nabla_{v,p})$. This is equivalent to requiring that the $\nabla$-horizontal distribution is $G$-invariant, or that the image of the $\nabla$-horizontal lift operator consists of $G$-invariant vector fields. We say that the connection $\nabla$ is \emph{flat} if the lift operator is a Lie algebra morphism, that is:
$$\nabla_{[v,w]} = [\nabla_v,\nabla_w].$$
 
{\bf In what follows, a connection means a $G$-principal flat connection.}\\ 

We are mostly concerned with \emph{rational connections}. The definition above is the definition of an algebraic connection. Let $P\to Y$ be a dominant map of algebraic varieties. A rational principal connection on $P$ over $Y$ is a principal connection on $P|_{Y^o}$ with $Y^{o}$ a Zariski open and dense subset $Y$ such that $P|_{Y^o}$ is a principal bundle.

\begin{exam}\label{ex:linear}
The most important example of a rational $G$-invariant connection is a linear differential equation in fundamental form. Let us fix an algebraic irreducible curve $Y$ and a non constant rational function $y\in \mathbb C(Y)$. As $\mathbb C(Y)$ is an algebraic extension of $\mathbb C(y)$, the derivation $\frac{d}{dy}$ can be uniquely extended to $\mathbb C(Y)$. Our equation is  

$$
\frac{dU}{dy} = A(y)U \text{ with  $U$ an invertible $n\times n$ matrix of unknowns and } A \in \mathfrak{gl}_n(\mathbb C(Y)).
$$
In this situation $G$ is the linear group ${\rm GL}_n(\mathbb C)$ with coordinate ring $\mathbb C\left[ U_i^j , \frac{1}{det(U)}\right]$, $P$ is $Y \times G$, and the action of $G$ on $P$ is given by right translations: $(y,U)\cdot g = (y,U g)$. We see $\frac{d}{dy}$ as a rational vector field on $Y$ and the linear differential system gives us its $\nabla$-horizontal lift 

$$\nabla _{\frac{d}{dy}} = \frac{\partial }{\partial y} + \sum_{i,j,k} A_i^j U_j^k\frac{\partial }{\partial U_i^k}. $$

The $\nabla$-horizontal lift operator is determined by the above formula and $\mathcal O_Y$-linearity. The field $\mathbb{C}(P)$ with the derivation $\nabla_{\frac{d}{dy}}$ is called the field of the universal solution of the linear equation. 
As the vector fields $\sum_k U_j^k\frac{\partial}{\partial U_i^k}$ are right invariant on $G$, $\nabla$ is a principal connection. The compatibility with the Lie bracket is straightforward as for any couple of vector fields $v$ and $w$ on $Y$, $[v,w]$ is colinear to $v$; thus $\nabla$ is flat. The equation defines an algebraic connection outside of the set of poles of the rational functions $A_i^j$ and zeroes of $\frac{d}{dy}$ as a vector field on $Y$.
\end{exam}


\subsection{Basic facts about singular foliations}
We will use freely vocabulary and results from holomorphic foliation theory while recalling the relevant objects in this subsection.

A \emph{singular foliation} $\mathcal F$ of rank $m$ on an algebraic variety $P$ is an $m$-dimensional $\mathbb C(P)$-vector subspace of rational vector fields in $P$, $\mathfrak X_P$, stable by Lie bracket. Such vector fields are said to be tangent to $\mathcal F$. We say that the foliation $\mathcal F$ is regular at $p\in M$ if there is a basis $\{v_1,\ldots,v_m\}$ of $\mathcal F$ such that $v_1(p),\ldots,v_m(p)$ are well defined and $\mathbb C$-linearly independent, otherwise we say that $\mathcal F$ is singular at $p$. The set of singular points of $\mathcal F$ form a Zariski closed subset ${\rm sing}(\mathcal F)$ of codimension $\geq 2$. We say that the foliation $\mathcal F$ is regular if ${\rm sing}(\mathcal F) = \emptyset$.

An \emph{integral submanifold} of $\mathcal F$ is an $m$-dimensional immersed analytic submanifold $\mathcal S\subset P$ (not necessarily embedded in $P$) whose tangent space at each point is generated by the values of vector fields in $\mathcal F$. Maximal connected integral submanifolds are called \emph{leaves}. Frobenius' theorem ensures that through any regular point  passes a unique leaf. Any connected integral submanifold of $\mathcal F$ determines completely a leaf by analytic continuation. 
For general results about Zariski closures of leaves of singular foliations we refer to \cite{bonnet}, in particular the Zariski closure of a leaf is irreducible. 

A subvariety $V\subset P$ is $\mathcal F$-invariant if the vector fields tangent to $\mathcal F$ whose domain is dense in $V$ restrict to rational vector fields on $V$. In such case $\mathcal F|_V$ is a singular foliation on $V$ of the same rank. Leaves of $\mathcal F|_V$ are leaves of $\mathcal F$ contained in $V$.

We say that $\mathcal F$ is \emph{irreducible} if and only if it does not admit any rational first integrals, that is, if $f\in\mathbb C(P)$ such that $vf = 0$ for all $v\in \mathcal F$ then $f \in \mathbb C$. From Theorem 1.4 in \cite{bonnet} we have that $\mathcal F$ is irreducible if and only if it has a dense leaf. Let $\mathcal L$ be leaf of $\mathcal F$ and $\overline{\mathcal L}$ its Zariski closure. Then $\overline{\mathcal L}$ is an irreducible $\mathcal F$-invariant variety and the restricted foliation $\mathcal F|_{\overline{\mathcal L}}$ is irreducible.

If $\nabla$ is a rational connection on a principal bundle $\pi: P \to Y$ then the space
$$\Gamma^{\rm rat}(\mathcal H_{\nabla}) = \mathbb C(P)\otimes_{\mathbb C(Y)}\nabla(\mathfrak X_Y)$$
of rational $\nabla$-horizontal vector field is a singular foliation on $P$ if and only if $\nabla$ is flat. Moreover, if $\nabla$ is an algebraic flat connection then $\Gamma^{\rm rat}(\mathcal H_{\nabla})$ is a regular foliation. By abuse of notation $\Gamma^{\rm rat}(\mathcal H_{\nabla})$-invariant varieties are called $\nabla$-invariant varieties and leaves of $\Gamma^{\rm rat}(\mathcal H_{\nabla})$ are called
$\nabla$-horizontal leaves. If $\nabla$ is algebraic and $\mathcal L$ is a $\nabla$-horizontal leaf then the projection $\mathcal L \to Y$ is a topological cover in the usual topology. 

Note that the foliation $\Gamma^{\rm rat}(\mathcal H_{\nabla})$ has non horizontal leaves (called \emph{vertical leaves}) included in the fibers of $P$ at non regular points of $\nabla$. 

\subsection{The Galois group}

Let us consider $\nabla$ an algebraic connection. The intersection of $\nabla$-invariant varieties is $\nabla$-invariant and therefore for each point $p\in P$ there is a minimal $\nabla$-invariant variety $Z$ such that $p\in Z$. 

Let $\mathcal L_p$ be the $\nabla$-horizontal leaf through $p \in P$.
It is clear that $\pi|_{\mathcal L_p}\colon \mathcal L_p\to Y$ is surjective. 
Moreover if $Z\subset P$ is a $\nabla$-invariant variety and $p\in Z$ then the Zariski closure of the leaf, $\overline{\mathcal L_p}\subset Z$. Note that, by the $G$-invariance of the distribution $\mathcal H_\nabla$, we have $\mathcal L_p \cdot g = \mathcal L_{p\cdot g}$.

\begin{lem}
The following are equivalent:
\begin{itemize}
    \item[(a)] $Z$ is a minimal $\nabla$-invariant variety.
     \item[(b)] For any $p\in Z$, $Z$ is the Zariski closure of $\mathcal L_p$.
    \item[(c)] $Z$ is the Zariski closure of a $\nabla$-horizontal leaf.
\end{itemize}
\end{lem}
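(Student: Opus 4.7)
The plan is to prove $(c) \Rightarrow (b) \Rightarrow (a) \Rightarrow (c)$. The starting observation is that for every $p \in P$ the Zariski closure $\overline{\mathcal L_p}$ is the minimal $\nabla$-invariant variety containing $p$: it is $\nabla$-invariant and irreducible (both facts recalled in the preceding subsection), and any $\nabla$-invariant subvariety $V$ containing $p$ must contain the leaf $\mathcal L_p$ through $p$---at any regular point of $V$ the vector fields tangent to $\mathcal F$ restrict to $V$, so the flow of the foliation through $p$ stays in $V$---and hence contains $\overline{\mathcal L_p}$ as $V$ is closed. With this in hand, $(a) \Rightarrow (c)$ is immediate (a minimal $\nabla$-invariant variety must coincide with $\overline{\mathcal L_p}$ for some $p$ in it), and $(b) \Rightarrow (a)$ is equally direct (for any $p \in Z$ the equality $Z = \overline{\mathcal L_p}$ already exhibits $Z$ as the minimal $\nabla$-invariant variety through $p$).

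The substantive step is $(c) \Rightarrow (b)$, which is where the $G$-equivariance of the horizontal distribution enters. Starting from $Z = \overline{\mathcal L_{p_0}}$, I fix an arbitrary $q \in Z$ and aim to show $\overline{\mathcal L_q} = Z$. Because $\nabla$ is regular, $\pi|_{\mathcal L_{p_0}} \colon \mathcal L_{p_0} \to Y$ is a topological cover, so there exists $q_0 \in \mathcal L_{p_0}$ with $\pi(q_0) = \pi(q)$; pick the unique $g \in G$ with $q = q_0 \cdot g$. By the equivariance identity $\mathcal L_{p \cdot g} = \mathcal L_p \cdot g$ noted just before the lemma, taking Zariski closures gives
\[
\overline{\mathcal L_q} \;=\; \overline{\mathcal L_{q_0}} \cdot g \;=\; \overline{\mathcal L_{p_0}} \cdot g \;=\; Z \cdot g.
\]
On the other hand $q \in Z$ and $Z$ is $\nabla$-invariant, so $\overline{\mathcal L_q} \subseteq Z$; thus $Z \cdot g \subseteq Z$, and since $R_g$ is an automorphism of $P$ the two sides have equal dimension, forcing $Z \cdot g = Z$ and therefore $\overline{\mathcal L_q} = Z$.

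The main obstacle is precisely this $(c) \Rightarrow (b)$ implication: one has to upgrade ``$Z$ is the Zariski closure of some leaf'' to ``$Z$ is the Zariski closure of the leaf through every point of $Z$'', and the only mechanism the hypotheses provide for moving between distinct leaves is the principal $G$-action. The surjectivity of the leaf cover $\pi|_{\mathcal L_{p_0}}$ is essential, because it lets one reach $q$ from a point of $\mathcal L_{p_0}$ by a single right translation, and the dimension equality $\dim(Z \cdot g) = \dim Z$ is what upgrades the inclusion of $G$-translates into equality.
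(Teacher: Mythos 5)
Your proof is correct and uses essentially the same mechanism as the paper's: both identify the $G$-action as the only way to move between leaves, use the surjectivity of a leaf onto $Y$ to find a translating element $g$, and then close with a dimension/irreducibility comparison of $Z$ with a translate. You arrange the cycle as $(c)\Rightarrow(b)\Rightarrow(a)\Rightarrow(c)$ and translate a point of the reference leaf to $q$, whereas the paper proves $(c)\Rightarrow(a)$ directly by translating $q$ onto the reference leaf, but this is a cosmetic reorganization of the same argument.
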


\begin{proof}
$(a)\Rightarrow(b)$ Assume that $Z$ is minimal, and let $p\in Z$. As $Z$ is $\nabla$-invariant it implies $\mathcal L_p \subset Z$ and therefore $\overline{\mathcal L_p}\subset Z$. We have that $\overline{\mathcal L_p}$ is a $\nabla$-invariant variety, and then by minimality of $Z$ we have $Z = \overline{\mathcal L_p}$.

$(b)\Rightarrow(c)$ Trivial. 

$(c)\Rightarrow(a)$ Let us consider $p\in Z$ such that $Z = \overline{\mathcal L_p}$. Note that $Z$ is irreducible.
Let us see that $Z$ is minimal. Let us consider $W\subset Z$ a $\nabla$-invariant subvariety and $q\in W$. We have,
$$\overline{\mathcal L_q} \subseteq W \subseteq \overline{\mathcal L_p} = Z.$$
Let us consider $p'\in \mathcal L_p$ in the same fiber than $q$. Note that there is $g\in G$ such that $q\cdot g = p'$. Then we have $\overline{\mathcal L_q}\cdot g = \overline{\mathcal L_{p'}} =\overline{\mathcal{L}_{p}} = Z$. If follows that $Z$ and $W$ have the same dimension. They are irreducible and therefore they are equal.
\end{proof}

\begin{lem}
Let $Z$ be a minimal $\nabla$-invariant variety for an algebraic connection. Then,
$${\rm Gal}(Z) = \{g\in G\colon Z\cdot g = Z\}$$
is an algebraic subgroup of $G$, and $\pi|_{Z}\colon Z\to Y$ is a ${\rm Gal}(Z)$-principal bundle. 
\end{lem}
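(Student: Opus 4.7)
The plan is to verify the two claims of the lemma in three stages: algebraicity of ${\rm Gal}(Z)$, freeness and transitivity of its action on each fibre $Z_y := Z \cap P_y$, and the resulting torsor structure. For algebraicity, I would invoke the standard stabiliser argument: the orbit map $G \to \mathrm{Hilb}(P)$, $g \mapsto Z \cdot g$, is a morphism and ${\rm Gal}(Z)$ is its fibre over the point $[Z]$; equivalently, working in an affine chart of $P$ and letting $f_1, \ldots, f_r$ generate the defining ideal of $Z$ locally, the condition $Z \cdot g = Z$ amounts to the polynomial identities $R_g^* f_i|_Z \equiv 0$, which are Zariski-closed conditions on $g$. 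The subgroup axioms are immediate from $(Z \cdot g) \cdot h = Z \cdot gh$.

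The heart of the proof is to show that each fibre $Z_y$ is a principal homogeneous ${\rm Gal}(Z)$-space. By $G$-equivariance of $\pi$, the ${\rm Gal}(Z)$-action preserves $Z_y$, and it is free because the $G$-action on $P_y$ is. For transitivity, I would pick $p, q \in Z_y$ and let $g \in G$ be the unique element with $q = p \cdot g$; the preceding lemma, applied to both $p$ and $q$, yields $Z = \overline{\mathcal L_p} = \overline{\mathcal L_q}$. Since the horizontal distribution $\mathcal H_\nabla$ is $G$-invariant, $\mathcal L_q = \mathcal L_{p \cdot g} = \mathcal L_p \cdot g$, and taking Zariski closures (which commutes with the algebraic translation $R_g$) gives $Z = \overline{\mathcal L_p \cdot g} = \overline{\mathcal L_p} \cdot g = Z \cdot g$, so $g \in {\rm Gal}(Z)$. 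Surjectivity of $\pi|_Z \colon Z \to Y$ is a free by-product, since $\pi(\mathcal L_p) = Y$ as $\mathcal L_p \to Y$ is a topological cover.

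To assemble the principal-bundle structure, I would restrict the $G$-torsor isomorphism $P \times G \to P \times_Y P$ to the candidate map $Z \times {\rm Gal}(Z) \to Z \times_Y Z$, $(p, h) \mapsto (p, p \cdot h)$. By the stabiliser property it lands in $Z \times_Y Z$, by the previous paragraph it is bijective on points, and its inverse is the restriction of the algebraic inverse on $P \times_Y P$; hence it is an isomorphism of varieties, and $\pi|_Z$ acquires the ${\rm Gal}(Z)$-principal bundle structure required. The main obstacle is the transitivity step of the second paragraph: one must promote the a priori $G$-element $g$ with $q = p \cdot g$ to an element of the stabiliser, and this is precisely where the leaf-closure characterisation from the preceding lemma does the essential work. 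The other two stages are routine consequences of the $G$-principal bundle structure on $P$ and the algebraicity of the $G$-action on subvarieties of $P$.
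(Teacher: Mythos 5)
Your proof is correct and rests on the same observation as the paper's: the canonical $G$-torsor isomorphism $P \times G \xrightarrow{\sim} P \times_Y P$ restricts to an isomorphism $Z \times {\rm Gal}(Z) \xrightarrow{\sim} Z \times_Y Z$. The paper states this restriction fact with no further verification, whereas you supply the missing details — notably the transitivity step via $Z = \overline{\mathcal L_p}$, $\mathcal L_{p\cdot g} = \mathcal L_p \cdot g$, and $\overline{\mathcal L_p \cdot g} = \overline{\mathcal L_p}\cdot g$ — which is exactly what is needed to see that the restricted map is onto $Z \times_Y Z$.
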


\begin{proof}
We just need to note that the isomorphism,
$$P\times G \xrightarrow{\sim} P\underset{Y}{\times} P, \quad (p,g)\mapsto (p, p\cdot g)$$
maps $Z\times {\rm Gal}(Z)$ onto $Z\underset{Y}{\times}Z$.
\end{proof}

Note that if $Z$ is a minimal $\nabla$-invariant subvariety  of $P$ then any other is of the form $Z\cdot g$ for some $g\in G$ and 
${\rm Gal}(Z\cdot g) = g^{-1}{\rm Gal}(Z) g$. It follows that $P$ is the disjoint union of minimal $\nabla$-invariant varieties, each one of them a principal bundle, all of them modeled over conjugated subgroups of $G$.

\begin{defn}
The {\em Galois group}  ${\rm Gal}(\nabla)$ of $\nabla$ is the algebraic group ${\rm Gal}(Z)$ for any minimal $\nabla$-invariant variety $Z$. It is a well defined abstract algebraic group, but its embedding as a subgroup of $G$ depends on the choice of $Z$.
\end{defn}

\begin{exam}
Let us consider example \ref{ex:linear}. Then $\mathcal L_p$ is the analytic subvariety obtained by analytic continuation of the germ of a solution of the linear equation with initial condition $p \in Y \times G$ along any path in $Y$ starting at $\pi(p)$.
The differential field $(\mathbb C(\overline{\mathcal L_p}),\nabla_{\frac{d}{dy}})$ 
is a Picard-Vessiot extension of $(\mathbb C(Y), \frac{d}{dy})$. We usually choose a point of the form $p= (y_0, {\rm id}) \in Y \times G$, in which case ${\rm Gal}(\nabla)$ is also called the Picard-Vessiot group at $y_0$.
\end{exam}

\begin{rem}
    The choice of a leaf $\mathcal L$ and a point $y_0 \in Y$ give rise to a representation of the fundamental group $\pi_1(Y,y_0) \to G$ by analytic continuation. Its image, the monodromy group $\Gamma$, in included in ${\rm Gal}(\overline{\mathcal L})$. To understand how big $\Gamma$ is inside $G$, we need to extend rationally $\nabla$ on $\overline{Y}$ a compactification of $Y$ with $\overline{Y}\setminus Y$ a normal crossing divisor. 

    When $\dim_{\mathbb C}(Y) =1$ and $\nabla$ as only regular singularities, Schlesinger's theorem (See  \cite[vol. 2.1 page 101]{Schlesinger} or \cite[\S \, 5.1.2]{van der Put-Singer} in the case $\overline{Y} = \mathbb{CP}_1$) insures that $\Gamma$ is Zariski dense in $G$.

    When $\dim_{\mathbb C}(Y) >1$ and $\nabla$ as only regular singularities, we have no elementary reference for an analogue of Schlesinger's theorem but the Riemann-Hilbert correspondence of Deligne \cite[Th\'eor\`eme 5.9]{deligne} together with his Tannakian description of the differential Galois group \cite[\S 9]{Tannaka} gives the density of the monodromy group in the differential Galois group.

     Without the regular-singular hypothesis, the monodromy is no longer Zariski dense in the Galois group (the exponential function whose differential equation has an irregular singularity is the simplest such example). The situation is much more delicate both from the geometrical context providing such connections (see \cite{Sabbah}) or from the differential Galois theory perspectives (see Ramis's density theorem \cite[Theorem 8.10]{van der Put-Singer}).
    
\end{rem}

\subsection{Connection form}\label{CartanForm} An alternative way to encode a principal connection $\nabla$ is through its connection form $\Omega$. First, let us define the structure form $\omega$, which is canonically attached to the principal bundle. We differentiate the action $R$ of $G$ on $P$, we obtain a trivialization
$$d_2R\colon P \times \mathfrak g \xrightarrow{\sim} T(P/Y).$$
Such trivialization defines the structure form $\omega = {\rm pr}_2 \circ d_2R^{-1}$ of the bundle,
$$\omega \colon T(P/Y) \to \mathfrak g.$$
Note that if $g\in G$ and $F$ is a gauge transformation then $dR_g$ and $dF$ map the vertical bundle $T(P/Y)$ onto itself. Therefore $R_g^*(\omega)$ and $F^*(\omega)$ are well-defined $\mathfrak g$-valued one-forms on $T(P/Y)$. The structure form $\omega$ has the following properties:
\begin{enumerate}
    \item Right $G$-covariance: for $g \in G$, $R_g^\ast \omega = {\rm Adj}_{g^{-1}}\circ \omega$;
    \item Left gauge-invariance: for any gauge transformation $F^*(\omega) = \omega$.
    \item For each fiber $P_y$ the form $\omega_y = \omega|_{P_y}$ satisfy the Maurer-Cartan structure equation:
    $$d\omega_y = -\frac{1}{2}[\omega_y,\omega_y].$$
\end{enumerate}

Given a principal connection $\nabla$ there is a unique way to extend the structure form $\omega$ to a $\mathfrak g$-valued $1$-form $\Omega$ on $P$ that vanish along the horizontal distribution, the so-called Cartan connection form: 
$$\Omega \colon TP \to \mathfrak g, \quad v_p \mapsto \omega(v_p - \nabla_{d\pi(v_p),p}).$$ 
It is clear that $\Omega$ and $\nabla$ determine each other as $\mathcal H_{\nabla} = \ker(\Omega)$. The gauge-invariance property of $\omega$ extends partially to $\Omega$.

\begin{defn}
We say that a gauge transformation $F\colon P\to P$ is a {\em gauge symmetry} of $\nabla$ if for any $p\in P$ and $v\in T_{\pi(p)}Y$, we have that $dF(\nabla_{v,p}) = \nabla_{v,F(p)}$.
\end{defn}
Summarizing, the connection form $\Omega$, attached to a flat principal connection has the following properties:
\begin{enumerate}
    \item $\Omega|_{T(P/Y)} = \omega$.
    \item $\Omega = 0$ on the horizontal distribution
    \item $\Omega$ is $G$-covariant ;
    For $g \in G$, $g^\ast \Omega = {\rm Adj}_{g^{-1}}\circ \Omega$ (right covariant);
    \item $\Omega$ is gauge-invariant: for any gauge symmetry $F$ of $\nabla$, $F^*\Omega = \Omega$ (left invariant)
    \item $\Omega$ satisfies the Maurer-Cartan structure equation $$d\Omega = -\frac{1}{2}[\Omega, \Omega].$$
\end{enumerate}

\begin{exam}\label{linearForm}
Going on with example \ref{ex:linear}, the structure form is,
$$\omega =(U^{-1}dU)|_{T(P/Y)}$$
and the connection form is:
$$\Omega = U^{-1}dU - U^{-1}AUdt.$$ 
\end{exam}

\begin{prop}\label{prp:connection}
Let $\mathcal L$ be a $\nabla$-horizontal leaf in $P$. The restriction of $\Omega$ to the Zariski closure $\overline{\mathcal L}$ of this leaf takes values in ${\rm Lie}({\rm Gal}(\overline{\mathcal L}))$.
\end{prop}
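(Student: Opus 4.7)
The plan is to reduce the statement to a pointwise verification on the direct sum decomposition of $T_p\overline{\mathcal L}$ into its vertical and $\nabla$-horizontal parts, and then evaluate $\Omega$ on each summand using the two defining properties that $\Omega$ vanishes on $\mathcal H_\nabla$ and agrees with the structure form $\omega$ on $T(P/Y)$. Write $H := {\rm Gal}(\overline{\mathcal L})$ with Lie algebra $\mathfrak h \subseteq \mathfrak g$.

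First I would invoke the two lemmas just before the proposition. Since $\mathcal L$ is a $\nabla$-horizontal leaf, its Zariski closure $\overline{\mathcal L}$ is a minimal $\nabla$-invariant variety, and hence $\pi|_{\overline{\mathcal L}}\colon \overline{\mathcal L}\to Y$ is an $H$-principal bundle. In particular, for a generic $y\in Y$ the fiber $\overline{\mathcal L}\cap P_y$ is a single $H$-orbit $p\cdot H$, of dimension $\dim H$. I would work at a smooth point $p\in\overline{\mathcal L}$ lying over a regular point of $\nabla$; the conclusion will then extend to all of $\overline{\mathcal L}$ because $\Omega$ is a rational form on $P$ and both sides of the containment $\Omega(T\overline{\mathcal L})\subseteq \mathfrak h$ are rational (resp.\ linear-algebraic) conditions that propagate from a dense open subset.

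Next I would verify the decomposition $T_p\overline{\mathcal L} = T_p(\overline{\mathcal L}/Y) \oplus (\mathcal H_\nabla)_p$ at such a point $p$. The vertical summand $T_p(\overline{\mathcal L}/Y) = \ker(d\pi)\cap T_p\overline{\mathcal L}$ is the tangent space to the $H$-orbit $p\cdot H$ and has dimension $\dim H$. The horizontal summand $(\mathcal H_\nabla)_p$ is contained in $T_p\overline{\mathcal L}$ by $\nabla$-invariance of $\overline{\mathcal L}$ and has dimension $\dim Y$. These two subspaces of $T_p\overline{\mathcal L}$ intersect trivially (since $\mathcal H_\nabla\cap T(P/Y)=0$ already in $TP$) and their dimensions add up to $\dim\overline{\mathcal L}$, giving the claimed direct sum.

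Finally I would evaluate $\Omega$ on each summand. On $(\mathcal H_\nabla)_p$ one has $\Omega=0$ by property (2) of the connection form, and $0\in\mathfrak h$. On $T_p(\overline{\mathcal L}/Y)$ one uses that $\Omega|_{T(P/Y)}=\omega = {\rm pr}_2\circ d_2R^{-1}$; under the trivialization $d_2R\colon P\times \mathfrak g\xrightarrow{\sim} T(P/Y)$ the subspace $\{p\}\times\mathfrak h$ is mapped isomorphically onto the tangent space $T_p(p\cdot H) = T_p(\overline{\mathcal L}/Y)$, so $\omega$ takes values in $\mathfrak h$ on this summand. Combining the two, $\Omega(T_p\overline{\mathcal L})\subseteq\mathfrak h$, which was to be shown. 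I do not expect a serious obstacle here: the argument is essentially bookkeeping on the principal-bundle structure of $\overline{\mathcal L}$, and the only point to be careful about is restricting to a generic smooth point before extending by rationality.
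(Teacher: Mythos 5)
Your argument is correct and is essentially the paper's own proof, spelled out in more detail: both rest on the fact that $\overline{\mathcal L}\to Y$ is a ${\rm Gal}(\overline{\mathcal L})$-principal bundle, then evaluate $\Omega$ on the vertical part via $\omega$ (landing in ${\rm Lie}({\rm Gal}(\overline{\mathcal L}))$) and on the horizontal part via the vanishing of $\Omega$ on $\mathcal H_\nabla$. The paper compresses the tangent-space decomposition and the genericity/rationality bookkeeping into a single ``it follows''; you have just made those steps explicit.
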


\begin{proof}
Note that $\overline{\mathcal L}\to Y$ is a principal ${\rm Gal}(\overline{\mathcal L})$ bundle. Therefore,
$\omega|_{T(\overline{\mathcal L}/Y)}$ takes values in ${\rm Lie}({\rm Gal}(\overline{\mathcal L}))$. If follows that $\Omega|_{\overline{\mathcal L}}$ takes values in ${\rm Lie}({\rm Gal}(\overline{\mathcal L}))$.
\end{proof}

\section{$\nabla$-Special subvarieties and Ax-Schanuel}

\begin{defn}
Let $\nabla$ be a flat $G$-principal connection over $Y$ with Galois group $G$.
A subvariety $X \subset Y$ is {\em $\nabla$-non-generic} if for each irreducible component $X_i$ with smooth locus $X_i^\ast$ the group ${\rm Gal}(\nabla|_{X_i^{\ast}})$ is a strict subgroup of $G$. Maximal irreducible $\nabla$-non-generic subvarieties are called {\em $\nabla$-special}.

Let $H\subset G$ be an algebraic subgroup. A $\nabla$-non-generic subvariety $X \subset Y$ is {\em $H$-non-generic} if ${\rm Gal}(\nabla|_{X^\ast}) \subset H$. Maximal irreducible $H$-non-generic subvarieties are called {\em $H$-special}.
\end{defn}

\begin{rem}
If $X=\{x\}$ is a point then a horizontal leaf of $\nabla|_X$ is a point in the fiber $P_x$ and the Galois group is just the identity. Points are $\nabla$-non-generic as soon as $Y$ is not a point itself. 
\end{rem}

 \begin{exam}
The connection on the trivial $(\mathbb C^\ast)^2$- bundle over $\mathbb C^2 = Y$ given by 
$$
dU - \begin{bmatrix}  d(y_2y_1) & 0 \\ 0 & d(y_1) \end{bmatrix} U =0
$$
with solution $U =   \begin{bmatrix}  e^{y_2y_1} & 0 \\ 0 & e^{y_1} \end{bmatrix}$ has special subvarieties. Its Galois group is $\mathbb C^\ast \times \mathbb C^\ast$ but its restriction to lines $y_2 = q \in \mathbb Q$ or to lines $y_1 = c \in \mathbb C$ have Galois groups $\mathbb C^\ast$. {The $\nabla$-special} subvarieties are described by Ax's Theorem (cf. \cite{Ax71}) : they are given by equations $\alpha_1 y_1y_2 + \alpha_2 y_1 = \beta$ with $\alpha_1,\alpha_1 \in \mathbb Q$ and $\beta \in \mathbb C$.
 \end{exam}

The relation between the $\nabla$-special subvarieties and special Shimura subvarieties (Hodge type locus) is not clear from the definitions. 

Throughout, unless otherwise stated, we use {\em special} subvarieties as short for $\nabla$-special subvarieties. This conflicts with the usual terminology for variations of mixed Hodge structures: in this case our $\nabla$-special subvarieties coincide with the monodromy special, or weakly special, subvarieties, among which the special subvarieties are the ones with smaller generic Mumford-Tate group. We refer to K. Chiu's paper \cite{Chiu0} for more details.\\

Lie algebras of algebraic subgroups of $G$ are termed \emph{algebraic} Lie subalgebras of ${\rm Lie}(G)$. Given any Lie subalgebra $\mathfrak h\subset {\rm Lie}(G)$ there exists a smallest algebraic Lie subalgebra containing $\mathfrak h$, its algebraic envelope, denoted by $\bar{\mathfrak h}$. 

\begin{defn}\label{def:NZDS}
A complex algebraic group $G$ is said to be {\it sparse} if for any proper Lie subalgebra  $\mathfrak h \subset {\rm Lie}(G)$, the algebraic envelope $ \bar{\mathfrak h}$ is a proper algebraic Lie subalgebra of ${\rm Lie}(G)$.
\end{defn}

The Zariski closure of a proper connected analytic subgroup $H$ of a sparse group $G$ is always a proper algebraic subgroup $\bar{H}\subset G$.

\begin{exam}
In general, any Lie subalgebra $\mathfrak h\subseteq {\rm Lie(G)}$ is an ideal of its algebraic envelope $\bar{\mathfrak h}$. It follows that semi-simple groups are sparse.
\end{exam}

The first version of our Ax-Schanuel theorem is the following.

\begin{thm}[\bf Theorem \ref{thmA}]\label{th:AxSh1}
Let $\nabla$ be a $G$-principal connection on $P \to Y$ with sparse Galois group $G$.
 Let $V$ be an algebraic subvariety of $P$, $\mathcal L$ an horizontal leaf and $\mathcal V \subset V\cap \mathcal L$ an irreducible analytic subvariety of $P$. If $\dim V < \dim (\mathcal V) + \dim G$
 then the projection of $\mathcal V$ in $Y$ is contained in a $\nabla$-special subvariety.
\end{thm}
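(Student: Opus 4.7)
The plan is to argue by contradiction. Assume $\pi(\mathcal V)$ is not contained in any $\nabla$-special subvariety of $Y$, and set $X := \overline{\pi(\mathcal V)}^{\mathrm{Zar}}$. By the definition of $\nabla$-special this forces $\mathrm{Gal}(\nabla|_X) = G$. I would then pass to the restricted principal bundle $P|_X \to X$ with its connection $\nabla|_X$, replace $V$ by $V \cap P|_X$ and $\mathcal L$ by the horizontal leaf $\mathcal L'$ of $\nabla|_X$ containing $\mathcal V$. The dimension inequality $\dim V < \dim \mathcal V + \dim G$ is preserved (the left-hand side can only decrease), and by the structural results of Section~\ref{cartan} every minimal $\nabla|_X$-invariant subvariety is all of $P|_X$, so $\mathcal L'$ is Zariski dense in $P|_X$.

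Next, I would exploit atypicality through the connection form $\Omega$. At a point $p \in \mathcal V$ that is smooth for both $\mathcal V$ and $V$, the inclusion $T_p \mathcal V \subseteq T_p V \cap \mathcal H_{\nabla, p}$ combined with the hypothesis gives
$$
\dim(T_p V \cap \mathcal H_{\nabla, p}) \geq \dim \mathcal V > \dim V - \dim G.
$$
Since $\ker \Omega_p = \mathcal H_{\nabla, p}$ and $\Omega_p$ identifies $T_p P / \mathcal H_{\nabla, p}$ with $\mathfrak g$, the image $\mathfrak h_p := \Omega_p(T_p V)$ is a proper subspace of $\mathfrak g$ of dimension strictly less than $\dim G$ at every such point. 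Shrinking to a Zariski-open subset of $V$ on which $\dim \mathfrak h_p$ is constant, and using the $G$-covariance of $\Omega$ to compare $\mathfrak h_p$ along orbits via $\mathrm{Ad}$, I would extract a fixed generic subspace $\mathfrak h \subsetneq \mathfrak g$. Pulling back the Maurer-Cartan identity $d\Omega = -\tfrac{1}{2}[\Omega, \Omega]$ to $V$ should then force $[\mathfrak h, \mathfrak h] \subseteq \mathfrak h$, so $\mathfrak h$ is a proper Lie subalgebra of $\mathfrak g$.

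Now the sparse hypothesis enters. By definition the algebraic envelope $\overline{\mathfrak h}$ is a proper algebraic Lie subalgebra of $\mathfrak g$, hence the Lie algebra of a proper algebraic subgroup $H \subsetneq G$. From the fact that $\Omega$ takes values in $\overline{\mathfrak h}$ along a $G$-invariant algebraic piece built from $V$---the natural candidate being the $G$-saturation $V \cdot G$---I would construct a proper $\nabla|_X$-invariant algebraic subvariety $Z \subsetneq P|_X$ containing $\mathcal V$, and apply Proposition~\ref{prp:connection} to deduce that the Galois group of the minimal $\nabla|_X$-invariant subvariety through a point of $\mathcal V$ is contained in $H$. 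This contradicts $\mathrm{Gal}(\nabla|_X) = G$ and completes the proof.

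I expect the main obstacle to be this last passage: the earlier steps yield only the infinitesimal/analytic information that the image subspace $\mathfrak h$ is a proper Lie subalgebra, whereas the final contradiction demands a genuine algebraic invariant subvariety with strictly smaller Galois group. The sparse hypothesis is the exact bridge from Lie-theoretic to algebraic data, but making that bridge rigorous---controlling how $\mathfrak h_p$ varies with $p$, verifying that the $G$-saturation of $V$ really reduces to an $H$-structure on a suitable open subset, and handling the singular loci of $V$ and of the leaf---is where the real work is concentrated.
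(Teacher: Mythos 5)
Your opening reduction (passing to $X=\overline{\pi(\mathcal V)}$ and $P|_X$, noting that the dimension inequality persists and that under the contradiction hypothesis every minimal $\nabla|_X$-invariant subvariety equals $P|_X$) is sound, and the middle of your argument --- the rank count showing $\Omega_p(T_pV)\subsetneq\mathfrak g$ generically, extracting a fixed subspace $\mathfrak h$ via the Maurer--Cartan identity, and then invoking sparseness to obtain a proper algebraic subgroup $\bar H$ with $\operatorname{Lie}(\bar H)=\bar{\mathfrak h}$ --- is essentially the paper's argument. You have correctly located the hard step, but the candidate you propose for it does not work.

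The problem with the $G$-saturation $V\cdot G$: since $\pi(V)$ is Zariski dense in $X$, the closure of $V\cdot G$ is all of $P|_X$, so it cannot serve as a proper invariant subvariety. Moreover, $\Omega$ does \emph{not} take values in $\bar{\mathfrak h}$ on $V\cdot G$: on vertical directions $\Omega$ restricts to the structure form $\omega$, which is an isomorphism onto $\mathfrak g$ fiberwise, and $V\cdot G$ contains entire fibers. Also, Proposition~\ref{prp:connection} points in the opposite direction from what you need: it says that over the Zariski closure of a leaf, $\Omega$ lands in the Lie algebra of its Galois group; you want the converse, that a containment $\Omega(TZ)\subseteq\bar{\mathfrak h}$ forces $\operatorname{Gal}(Z)\subseteq\bar H$, which requires its own argument.

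The paper's resolution is dual to yours: rather than enlarging $V$ inside $P$, one quotients $P$ by $\bar H$. Setting $\rho\colon P\to P/\bar H$ and $\ell\colon\mathfrak g\to\mathfrak g/\bar{\mathfrak h}$, the form $\ell\circ\Omega$ vanishes on $V$, and $\ker(\ell\circ\Omega)$ is a foliation on $P$ whose leaves are saturated by $\bar H$-orbits; hence it descends to a rank-$\dim Y$ foliation $\mathcal F$ on $P/\bar H$. Then $\rho(V)$ lies in a single leaf of $\mathcal F$, so $\dim\rho(V)\leq\dim Y$; equality would make $\rho(V)$ an algebraic leaf and drop the Galois group of $\nabla$ to $\bar H$, contradicting $\operatorname{Gal}(\nabla)=G$. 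Thus $\pi(V)$ lands in a proper $X\subsetneq Y$, and over $X$ the preimage $\rho^{-1}(\rho(V))$ is a $\nabla|_X$-invariant subvariety whose fibers are finite unions of $\bar H$-orbits, forcing $\operatorname{Gal}(\nabla|_X)\subseteq\bar H$ and hence $X$ special. This quotient construction is exactly the bridge from the Lie-algebraic datum $\bar{\mathfrak h}$ to an algebraic invariant subvariety that you identified as missing.
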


\begin{rem}
For a $G$-principal connection on $P \to Y$ with $\dim Y =n$, an horizontal leaf $\mathcal L$ is an injectively immersed analytic submanifold, and locally $(\mathcal L, p)$ is biholomorphic to $(\mathbb C^n,0)$. For an algebraic subvariety $V \subset P$ and a point $p \in \mathcal L \cap V$, we have that $\mathcal L \cap V$ in $(\mathcal L, p)$ is a germ of analytic subvariety. Then in Theorem \ref{th:AxSh1} one can choose $\mathcal V$ to be an irreducible component of $\mathcal L \cap V$ seen as a germ of analytic subvariety of $(\mathcal L,p)$.

Given a formally parameterized space $\hat{\mathcal V} : {\rm Spf}\:\mathbb C[[s_1, \ldots, s_m ]] \to \mathcal L \subset P$, we let $\mathcal V$ be the smallest germ of analytic subvariety containing it and $V$ the Zariski closure of $\mathcal V$. In particular one gets the inclusions:
$\hat{\mathcal V} \subset \mathcal V \subset \mathcal L \cap V$. 
\end{rem}

From the remark above one gets another formulation of Ax-Schanuel type theorem involving formal power series.

\begin{cor}\label{FirstAx}
Let $\hat{\mathcal V} : {\rm Spf}\:\mathbb C[[s_1, \ldots, s_n ]] \to \mathcal L \subset P$ be a non constant formally parameterized space in a horizontal leaf $\mathcal L$ of $\nabla$ and $V$ its Zariski closure.
 If $\dim V < {\rm rk}(\hat{\mathcal V}) + \dim G$ then the projection of $V$ in $Y$ is contained in a special subvariety.
 \end{cor}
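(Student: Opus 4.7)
The strategy is to reduce this corollary to Theorem A by exhibiting an irreducible component of $V\cap\mathcal L$ of sufficiently large dimension that contains the formal image of $\hat{\mathcal V}$. Let $p\in P$ denote the image of the closed point of ${\rm spf}\,\mathbb C[[s_1,\ldots,s_n]]$. Because $\hat{\mathcal V}$ lands in the horizontal leaf $\mathcal L$ and $V$ is by definition the Zariski closure of its image, the formal image of $\hat{\mathcal V}$ sits simultaneously inside the formal completions of $\mathcal L$ and of $V$ at $p$, hence inside the formal completion of the (analytic) intersection $V\cap\mathcal L$ at $p$.

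The key technical step is to upgrade this formal statement into an actual dimension bound on an analytic component of $V\cap\mathcal L$. The rank $r={\rm rk}(\hat{\mathcal V})$ is the rank of the Jacobian of $\hat{\mathcal V}$ as a formal matrix; by the formal rank theorem, $\hat{\mathcal V}$ factors as a formal submersion followed by an injective formal map from ${\rm spf}\,\mathbb C[[s_1,\ldots,s_r]]$. Its image is therefore a formally irreducible formal subscheme of formal dimension $r$ at $p$. Because it is formally irreducible, it is contained in exactly one local analytic irreducible component $\mathcal V$ of $V\cap\mathcal L$ at $p$, and necessarily $\dim\mathcal V\ge r$.

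Now I would invoke Theorem A. The hypothesis $\dim V<r+\dim G$, together with $\dim\mathcal V\ge r$, yields $\dim V<\dim\mathcal V+\dim G$, so Theorem A applies to the irreducible component $\mathcal V$ of $V\cap\mathcal L$: the projection $\pi(\mathcal V)$ is contained in a $\nabla$-special subvariety $X\subset Y$. Since the image of $\hat{\mathcal V}$ is contained in $\mathcal V$, we have $\pi(\hat{\mathcal V})\subset X$, and $X$ being Zariski closed implies that the Zariski closure of $\pi(V)$, which agrees with the Zariski closure of $\pi(\hat{\mathcal V})$, is also contained in $X$. To upgrade "contained in" to "is", I use that restricting a flat principal connection to a subvariety can only shrink the Galois group: since $X$ is $\nabla$-special, ${\rm Gal}(\nabla|_X)$ is a proper subgroup of $G$, and the same then holds for the Galois group of $\nabla$ restricted to the closure of $\pi(V)\subset X$; hence this closure is itself $\nabla$-special.

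The principal obstacle is the formal-to-analytic dimension transfer in the second paragraph, i.e.\ verifying that the formal image of $\hat{\mathcal V}$ really is contained in a single analytic irreducible component of $V\cap\mathcal L$ and that the dimension of that component is bounded below by the formal rank. Once this is set up correctly, the rest is a direct book-keeping with Theorem A and the elementary monotonicity of the Galois group under restriction.
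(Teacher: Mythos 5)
Your proof follows essentially the same route as the paper's: place the formal image inside $V\cap\mathcal L$, bound $\dim\mathcal V$ from below by ${\rm rk}(\hat{\mathcal V})$, and then invoke Theorem~A. The paper's own proof is a two-sentence version of your first three paragraphs; your fourth paragraph (upgrading ``contained in a special subvariety'' to ``is a special subvariety'' via monotonicity of the Galois group under restriction to subvarieties) is a welcome piece of bookkeeping that the paper leaves implicit, and it is correct --- though in fact the proof of Theorem~A already produces the special subvariety $X$ as the Zariski closure of $\pi(\rho(V))$, which coincides with $\overline{\pi(V)}$, so the monotonicity argument is an alternative way to the same end. One small caveat on your second paragraph: ${\rm rk}(\hat{\mathcal V})$ is the \emph{generic} rank of the Jacobian (its rank over the fraction field $\mathbb C((s_1,\ldots,s_n))$), which can exceed the rank at the closed point, so the literal constant-rank (formal rank) theorem does not apply verbatim; what one actually needs is the weaker and standard fact that in characteristic zero the generic Jacobian rank equals the analytic (or formal) dimension of the image germ, which then lies in a single irreducible component of the analytic set $V\cap\mathcal L$ of dimension at least $r$. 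With that substitution, your argument is complete.
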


\subsection{Proof of Theorem \ref{th:AxSh1}}

If $\mathcal V$ is a point then its projection is a point thus is special. Let us assume $\dim \mathcal V>0$.
Then the proof of the theorem follows from the next lemmas about the connection form $\Omega$. We can assume without loss of generality that $V$ is the Zariski closure of the component $\mathcal V$ of $V \cap \mathcal L$, in particular $V$ is irreducible.

\begin{lem}\label{rank}
The restriction of $\Omega$ to $V$ has rank strictly smaller than $\dim G$ at the generic point of $V$.
\end{lem}
\begin{proof}

 As the function on $V$ defined by $p \mapsto \dim \ker \Omega|_{T_p V}$ is upper semi-continuous, the set of points $p \in V$ such that the dimension of the kernel of $\Omega|_V$ at $p$ is greater than or equal to $\dim \mathcal V$ is a Zariski closed subset. This set contains the points of $\mathcal V$ for which the tangent vectors are in the kernels. This analytic space is Zariski dense in $V$. It hence follows that the rank of $\Omega|_V$ is smaller than $\dim V - \dim \mathcal V < \dim G$. As $\dim \mathcal V >0$, the lemma is proved.
\end{proof}

\begin{lem}
There is a proper Lie subalgebra $\mathfrak{h}\subset\mathfrak{g}$ such that for all $p\in V$, $\Omega_p(T_pV) \subset \mathfrak h$ with equality for generic $p$.
\end{lem}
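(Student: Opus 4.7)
My plan is to prove the lemma by identifying a Zariski open $V^\circ \subset V^{\mathrm{sm}}$ on which the image $\mathfrak h(p) := \Omega_p(T_pV)$ is constant, checking that the common value $\mathfrak h$ is closed under the Lie bracket, and then propagating the inclusion to all of $V$ by semicontinuity. Set $r := \max_{p \in V^{\mathrm{sm}}} \dim \Omega_p(T_pV)$, which is strictly less than $\dim G$ by Lemma \ref{rank}, and let $V^\circ$ denote the Zariski open locus where the maximum is attained. The central ingredient is the identity obtained by combining the Cartan formula $d\Omega(v,w) = v\cdot\Omega(w) - w\cdot\Omega(v) - \Omega([v,w])$ with the structure equation $d\Omega = -\tfrac12[\Omega,\Omega]$:
\begin{equation*}
\Omega([v,w]) = v\cdot\Omega(w) - w\cdot\Omega(v) + [\Omega(v),\Omega(w)]. \qquad (\star)
\end{equation*}
Applied to $v,w$ tangent to $K := \ker(\Omega|_V)$ it yields $[v,w] \in K$; thus $K$ is an involutive distribution of rank $\dim V - r$ on $V^\circ$. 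Since $K \subset \mathcal{H}_\nabla$, its leaves are pieces of $\nabla$-horizontal leaves contained in $V$; in particular $\mathcal V$ is itself an integral leaf of $K$.

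To obtain constancy of $\mathfrak h(p)$ I would fix a basis $X_1,\ldots,X_n$ of $\mathfrak g$, write $\Omega|_V = \sum \omega^i X_i$, and select $I \subset \{1,\ldots,n\}$ of size $r$ so that $\{\omega^i\}_{i \in I}$ is a $\mathbb{C}(V)$-basis of $\mathrm{span}_{\mathbb{C}(V)}(\omega^1,\ldots,\omega^n)$; then $\omega^j = \sum_{i \in I} a_i^j \omega^i$ for $j \notin I$ with $a_i^j \in \mathbb{C}(V)$. Expanding the component Maurer--Cartan equations $d\omega^j = -\tfrac12\sum c^j_{kl}\omega^k\wedge\omega^l$ and substituting places $\sum_{i \in I}(da_i^j)\wedge\omega^i$ inside $\bigwedge^2 \mathrm{span}(\omega^i : i \in I)$; decomposing $da_i^j$ against a complement to this span in $T^*V^\circ$ and invoking Cartan's lemma annihilates the complementary component, giving $da_i^j \in \mathrm{span}(\omega^i : i \in I) = K^\perp$. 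Hence each $a_i^j$ is a first integral of $K$ and in particular is constant on the $K$-leaf $\mathcal V$. Because $V = \overline{\mathcal V}$ is the Zariski closure of the Zariski-dense $\mathcal V$, each rational function $a_i^j$ is globally constant on $V$, and
\begin{equation*}
\mathfrak h := \mathrm{span}\bigl(X_i + \sum_{j \notin I} a_i^j X_j : i \in I\bigr) \subset \mathfrak g
\end{equation*}
is a well-defined $r$-dimensional subspace with $\mathfrak h(p) = \mathfrak h$ for every $p \in V^\circ$.

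That $\mathfrak h$ is a Lie subalgebra follows immediately from $(\star)$: given $Y,Z \in \mathfrak h$, the constancy of $\mathfrak h(p)$ on $V^\circ$ allows me to pick local vector fields $v,w$ tangent to $V^\circ$ with $\Omega(v) \equiv Y$ and $\Omega(w) \equiv Z$, so the derivative terms vanish and $[Y,Z] = \Omega([v,w])(p) \in \mathfrak h(p) = \mathfrak h$. The inclusion $\Omega_p(T_pV) \subset \mathfrak h$ at any $p \in V$ (including $p \notin V^\circ$) then follows from the Zariski closedness of the locus $\{p \in V : \Omega_p(T_pV) \subset \mathfrak h\}$ together with the density of $V^\circ$ in $V$. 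The main obstacle I anticipate is the rigorous execution of the Cartan's lemma step: one must verify carefully that the $\mathbb{C}(V)$-independence of $\{\omega^i\}_{i \in I}$ supports the required direct-sum decomposition of $T^*V^\circ$, and that the mixed component in $(\text{complement}) \wedge K^\perp \subset \bigwedge^2 T^*V^\circ$ genuinely meets $\bigwedge^2 K^\perp$ only at zero, which is what forces the complementary part of $da_i^j$ to vanish.
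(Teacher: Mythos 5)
Your proof is correct and follows the same broad strategy as the paper: decompose $\Omega|_V = \sum \omega^i X_i$ in a basis of $\mathfrak g$, express the dependent components in terms of a maximal $\mathbb C(V)$-independent subset, show the coefficient functions are first integrals of $\ker(\Omega|_V)$, conclude they are constant on $\mathcal V$ and hence (by Zariski density of $\mathcal V$ in $V$) globally constant, and finally close $\mathfrak h$ under the bracket using the Maurer--Cartan structure equation. The explicit remark that the containment $\Omega_p(T_pV)\subset\mathfrak h$ propagates to all of $V$ by Zariski closedness is a point the paper leaves implicit, and is a nice addition.

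The one place your argument genuinely diverges from the paper's is the mechanism for proving the coefficients $a_i^j$ are first integrals. You differentiate the relation $\omega^j = \sum_{i\in I} a_i^j\omega^i$, substitute the componentwise structure equations, and then invoke Cartan's lemma together with a direct-sum decomposition of $T^*V^\circ$ to force $da_i^j$ into $\operatorname{span}(\omega^i : i\in I)$ -- and you rightly flag this Cartan's-lemma step as the delicate part of the proposal. The paper sidesteps that bookkeeping entirely: take a vector field $D$ tangent to $V$ lying in $\ker(\Omega|_V)$ and apply the Cartan magic formula $\operatorname{Lie}_D = i_D\circ d + d\circ i_D$ to each scalar component $\Omega_j$. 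Since $i_D\Omega_j = 0$ and $d\Omega_j$ is a constant-coefficient combination of $\Omega_k\wedge\Omega_\ell$ (each annihilated by $i_D$), one gets $\operatorname{Lie}_D\Omega_j = 0$ immediately; then expanding $\operatorname{Lie}_D\Omega_{k+i} = \sum_j (D\cdot b_{ij})\Omega_j + \sum_j b_{ij}\operatorname{Lie}_D\Omega_j$ and using linear independence of the $\Omega_j$ over $\mathbb C(V)$ yields $D\cdot b_{ij}=0$. This contraction argument reaches the same conclusion ($db_{ij}$ annihilates the kernel foliation) without any wedge-square decomposition, so if you want to tighten your write-up, adopting it would dispatch the very obstacle you identify at the end.
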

\begin{proof}
We show that $\Omega_p(T_p V)$ does not depend on $p$ in a Zariski open subset of $V$. Let $e_1,\ldots,e_q$ be a basis of $\mathfrak{g}$ and decompose
$$ \Omega|_V = \sum_{i=1}^q \Omega_i e_i.$$

 We may assume that the first $\Omega_1,\ldots,\Omega_k$ form a maximal set of linearly independent 1-forms over $\mathbb C(V)$ among the $\Omega_i$'s. We have then:
 
 $$\Omega_{k+i} = \sum_{j=1}^k b_{ij}\Omega_j$$ 
with $b_{ij}$ rational functions on $V$. We consider a vector field $D$ on $V$ in the kernel of $\Omega$. By taking Lie derivatives we obtain:
$${\rm Lie}_D (\Omega_{k+i}) = \sum_{j=1}^k (D\cdot b_{ij})\Omega_j + \sum_{j=1}^k b_{ij}{\rm Lie}_D\Omega_j.$$

 From the Cartan formula (${\rm Lie}_D = i_D\circ d + d\circ i_D$) and Cartan structural equation ($d\Omega_j$ is a combination of the $2$-forms $\Omega_k\wedge\Omega_\ell$ with constant coefficients) we have that ${\rm Lie}_D\Omega_j = 0$ for any $j=1,\ldots,q$, and therefore for $i=1,\ldots, q-k$ we have 
 $$\sum_{j=1}^k (D\cdot b_{ij})\Omega_j = 0.$$
 As the $\Omega_i$'s, $1\leq i\leq k$, are linearly independent, we obtain that $D\cdot b_{ij}=0$ for all $1\leq i\leq q-k$ and $1\leq j\leq k$. It hence follows that the $b_{ij}$'s are first integrals of the foliation of $V$ defined by $\ker(\Omega|_V)$ and thus are constant on $\mathcal V$. Since $V$ is the Zariski closure of $\mathcal V$, the functions $b_{ij}$ are constant. We have thus proved that for $p$ a point of maximal rank of $\Omega|_{T_pV}$, the image of $T_p(V)$ by $\Omega_p$ is a fixed linear subspace $\mathfrak h \subset \mathfrak g$ independent of $p$.
 
 We claim that $\mathfrak h$ is a Lie subalgebra. Indeed, let $e_1$, $e_2$ be two elements of $\mathfrak h$ and let $v_1$ and $v_2$ be two vector fields on $V$ such that $\Omega|_V(v_i)= e_i$ for $i=1,2$. Then by Cartan's structural equation, 
 $$[e_1,e_2] = d\Omega(v_1,v_2) $$
 
 On the other hand, as $\Omega$ is a $1$-form with values in $\mathfrak h$, we have that $d \Omega$ is a $2$-form with values in $\mathfrak h$.
\end{proof}

Let us consider $\bar H$, the connected algebraic subgroup of $G$ whose Lie algebra is $\bar{\mathfrak h}$. Note that since $\mathfrak h$ is a proper Lie subalgebra of ${\rm Lie}(G)$ and $G$ is sparse, it follows that $\bar H$ is a proper algebraic subgroup of $G$. Let us also consider the quotient of $P$ by the action of $\bar H$, namely $\rho : P \to P/\bar H$ and $\ell : \mathfrak g \to \mathfrak g/\bar{\mathfrak h}$. The composition $\ell \circ \Omega$ is a ${\rm Lie}(G)/\bar{\mathfrak h}$ valued $1$-form on $P$. From the Cartan structure equation, we have that $\rm ker(\ell\circ\Omega)$ is a foliation on $P$. Let us recall that a foliation $\mathcal G$ on $P$ is said to be $\rho$-projectable if there is a foliation $\mathcal F$ on $P/\bar H$ whose leaves are the projections by $\rho$ of leaves of $\mathcal G$.

\begin{lem}
The foliation $\mathcal G$ on $P$ defined by $\ker (\ell \circ \Omega)$ is $\rho$-projectable on a foliation $\mathcal F$ on $P/\bar H$ of rank equal to the dimension of $Y$.
\end{lem}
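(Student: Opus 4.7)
The plan is to establish two facts about the distribution $\mathcal G := \ker(\ell\circ\Omega)$ on $P$: (i) the fibers of $\rho$ are tangent to $\mathcal G$, and (ii) $\mathcal G$ is invariant under the $\bar H$-action. Together these force $\mathcal G$ to descend to a distribution $\mathcal F$ on $P/\bar H$; integrability and the rank count then come for free.

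For (i), recall that along any fiber $P_y$ the connection form restricts to the structure form, $\Omega|_{T(P/Y)}=\omega$, and that $\omega$ sends the fundamental vector field of $\xi\in\mathfrak g$ to $\xi$. Since the tangent space to the $\bar H$-orbit through $p$ is spanned by the fundamental vector fields of elements of $\bar{\mathfrak h}$, applying $\ell\circ\Omega$ to such a vector field yields $\ell(\xi)=0$. Hence $T_p(\bar H\cdot p)\subset \ker(\ell\circ\Omega)_p$ for every $p$, which is exactly (i). For (ii), we use the $G$-covariance $R_h^\ast\Omega=\mathrm{Adj}_{h^{-1}}\circ\Omega$. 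Because $\bar H$ is an algebraic subgroup (in particular normalized by itself), $\mathrm{Adj}_{h^{-1}}$ preserves $\bar{\mathfrak h}$ for all $h\in\bar H$, so it descends to a linear automorphism $\overline{\mathrm{Adj}}_{h^{-1}}$ of $\mathfrak g/\bar{\mathfrak h}$. Thus $R_h^\ast(\ell\circ\Omega)=\overline{\mathrm{Adj}}_{h^{-1}}\circ(\ell\circ\Omega)$, and since $\overline{\mathrm{Adj}}_{h^{-1}}$ is an isomorphism, $R_h$ preserves $\ker(\ell\circ\Omega)=\mathcal G$.

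From (i) and (ii), the standard descent argument gives a distribution $\mathcal F$ on $P/\bar H$ whose pullback via $\rho$ is $\mathcal G$; equivalently, $\ell\circ\Omega$ itself descends to a $(\mathfrak g/\bar{\mathfrak h})$-valued $1$-form $\bar\Omega$ on $P/\bar H$ (its value on a vector $\bar v\in T_{\rho(p)}(P/\bar H)$ is well defined precisely by (i) and (ii)), and $\mathcal F=\ker(\bar\Omega)$. Integrability of $\mathcal F$ follows from integrability of $\mathcal G$, which in turn is immediate from Cartan's structural equation $d\Omega=-\tfrac12[\Omega,\Omega]$: for $v,w\in\mathcal G_p$ we have $\Omega(v),\Omega(w)\in\bar{\mathfrak h}$, so $[\Omega(v),\Omega(w)]\in\bar{\mathfrak h}$, whence $d(\ell\circ\Omega)(v,w)=0$.

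For the rank, observe that $\Omega_p\colon T_pP\to \mathfrak g$ is surjective (its restriction to the vertical tangent space is the isomorphism $\omega_p$), so $\ell\circ\Omega_p$ is surjective onto $\mathfrak g/\bar{\mathfrak h}$ and $\dim\mathcal G_p=\dim P-(\dim G-\dim\bar H)=\dim Y+\dim\bar H$. Since the $\bar H$-orbit through $p$ already contributes $\dim\bar H$ to $\mathcal G_p$ and lies in a fiber of $\rho$, the quotient distribution $\mathcal F$ on $P/\bar H$ has rank $\dim Y$, as required. The only delicate point is the descent step, which hinges on checking that $\mathrm{Adj}_{h^{-1}}$ preserves $\bar{\mathfrak h}$; this is automatic here since $\bar H$ is an algebraic subgroup of itself.
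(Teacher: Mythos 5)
Your proof is correct, and it follows the same overall outline as the paper's (show the vertical $\bar H$-directions lie in the kernel, compute the rank, and deduce projectability). The interesting difference is in how the key point — that the distribution $\mathcal G=\ker(\ell\circ\Omega)$ descends — gets justified. The paper's proof merely observes that the infinitesimal generators of $\bar H$ lie in $\mathcal G$, so leaves are saturated by $\bar H$-orbits, and concludes projectability from there; this implicitly uses that $\mathcal G$ is closed under Lie bracket (so $[X_\xi,\mathcal G]\subset\mathcal G$ for $\xi\in\bar{\mathfrak h}$) together with the connectedness of $\bar H$ to get $\bar H$-invariance. You instead establish $\bar H$-invariance explicitly from the $G$-covariance $R_h^*\Omega={\rm Adj}_{h^{-1}}\circ\Omega$, which gives $R_h^*(\ell\circ\Omega)=\overline{{\rm Adj}}_{h^{-1}}\circ(\ell\circ\Omega)$ directly. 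This is arguably cleaner: it separates the descent of the distribution from integrability (you then invoke the Cartan structure equation only to make $\mathcal F$ a foliation), and it does not depend on $\bar H$ being connected. The rank computations agree — you count $\dim\mathcal G_p=\dim P-\dim(\mathfrak g/\bar{\mathfrak h})$ from surjectivity of $\Omega_p$, while the paper counts $\dim\ker\Omega+\dim\bar H$; both give $\dim Y+\dim\bar H$ on $P$ and hence rank $\dim Y$ on $P/\bar H$. In short: same statement, same conclusion, slightly more explicit and self-contained argument on your part.
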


\begin{proof}
If $v$ is an infinitesimal generator of the action of $\bar H$ then  $\Omega(v) \in \bar {\mathfrak h}$ and $\ell \circ \Omega(v)=0$. Thus leaves of $\mathcal G$ are saturated by orbits of $\bar H$. This implies that $\mathcal G$ is $\rho$-projectable. Let $\mathcal F$ be the projection of $\mathcal G$ on $P/\bar H$.

As $\Omega$ is onto and $\dim \ker \ell = \dim \bar H$, the vector fields in $\ker \Omega$ and the infinitesimal generators of $\bar H$ are local generators of $\mathcal G$. For such a infinitesimal generator $v$ we have that $d\rho (v)=0$ and hence $ {\rm rank}\,  \mathcal F \leq \dim Y$. The projection on $Y$ of vectors in $\ker(\Omega)$ is injective and commutes with $d\rho$. Hence $ {\rm rank}\,  \mathcal F \geq \dim Y$.
\end{proof}

We can now conclude the proof of Theorem \ref{th:AxSh1}.
\begin{proof}
The projection of $\rho(V)$ in $Y$ is a constructible set. Let $X$ be its Zariski closure. To prove that $X$ is $\nabla$-non-generic, it suffices to build a strict $\nabla|_X$-invariant subvariety of $P|_X$.

As $\ell \circ \Omega|_V = 0$, we have that $V$ is included in a leaf of $\mathcal G$. Thus $\rho(V)$ is included in a leaf of $\mathcal F|_X$ and we have that $\dim \rho(V) \leq {\rm rank}\, \mathcal F|_X$. By construction $\dim \rho(V) \geq \dim X$ and $ {\rm rank}\,  \mathcal F|_X = \dim X$. So it follows that $\dim \rho(V) = {\rm rank}\, \mathcal F|_X$. From this we get that $\rho(V)$ is an algebraic leaf of $\mathcal F |_X$ and this implies that $\rho^{-1}(\rho(V))$ is a  $\nabla|_X$-invariant subvariety whose fibers are finite union of $\bar H$ orbits. So the Lie algebra of ${\rm Gal}(\nabla|_X)$ is included in $\bar{\mathfrak h}$ and thus $X$ is $\nabla$-non-generic.
 \end{proof}

\subsection{Uniformizing equation and Ax-Schanuel Theorems}\label{sectionUniformization}
  
Let $G$ be an algebraic group and $B$ an algebraic subgroup. A $(G, G/B)$-structure on an algebraic variety $Y$ is usually defined using charts on $Y$ with values in $G/B$ and change of charts in $G$. Here is an algebraic version of this notion.

The jet space $J^\ast(Y, G/B)$ of invertible jets of maps from $Y$ to $G/B$ is endowed with an action of $G$ by postcomposition. As it is a jet space, its ring has a $\mathcal D_Y$-differential structure, see \cite[\S 2.3.2]{Beilison-Drinfeld}, where $\mathcal D_Y$ is the non-commutative algebra of linear differential operators on $\mathcal O_Y$.  

\begin{defn}
A {\em rational $(G, G/B)$-structure} on $Y$ is a $\mathcal D_Y$-subvariety $\mathscr{C}$ of $J^\ast(Y, G/B)$ such that $\mathscr C|_{Y^o}$ is a $G$-principal sub-bundle of $J^\ast(Y^o, G/B)$, for some dense Zariski-open subset $Y^o \subset Y$.
\end{defn}

The set of solutions of $\mathscr{C}$ denoted also by $\mathscr C$ is the set of holomorphic maps $c : U \to G/B$ defined on an Euclidean open subset $U \subset Y$ whose Taylor expansions $j^\infty(c) : U \to J^\ast(Y, G/B)$ take values in $\mathscr C$. This is the set of charts of the $(G,G/B)$-structure.

As $\mathcal D_Y$-varieties have no $ \mathcal O_Y$-torsion, $\mathscr C$ is well-defined if we know $\mathscr C|_{Y^o}$. The following lemma explains that the Theorem \ref{thmA} can be applied to study some properties of $(G, G/B)$-structures.

\begin{lem}\label{GGB-connection}
A rational $(G, G/B)$-structure $\mathscr C$ on an algebraic variety $Y$ defines a $G$-principal connection on some Zariski open subset $Y^o$.
  \end{lem}

\begin{proof}
By definition we have that $\mathscr{C}$ is a $G$-principal bundle over $Y^o$. The $\mathcal D_Y$-structure of $\mathscr{C}$ gives a lift of vector fields on $Y^\circ$ to $\mathscr{C}$, it is a connection on $\mathscr{C}|_{Y^\circ}$. 
The group $G$ acts on $\mathscr{C}$ by post-composition. The action of $\mathcal D_Y$ is the infinitesimal part of the action by precomposition. These two commute and hence the connection is $G$-invariant.   
\end{proof}

\begin{rem}
The natural action of $G$ on $J(Y,G/B)$ is a left action but the tradition in differential geometry is to endow principal bundles with a right action. To be exactly in the framework of Section \ref{cartan}, we should change the action into a right action. We will not mention this in the sequel.

If $(G, G/B)$-structure are a particular case of principal bundle, not all principal bundle can be presented in this way. For instance if there exists no $B \subset G$ such that $\dim_{\mathbb C} G/B = \dim_{\mathbb C} Y$ the trivial principal bundle $Y \times G$ can not be the principal bundle of a $(G, G/B)$-structure.
\end{rem}

\begin{rem}
    For any chart $c \in \mathscr C$ defined on $U$, the graph of its Taylor expansion $j^{\infty}(c) :U \to J(Y,G/B)$ is an horizontal leaf of the principal connection given by Lemma \ref{GGB-connection}. Any two charts defined on the same open subset are related by post composition with an element of $G$. Hence a rational $(G, G/B)$-structure on $Y$ defines a usual $(G, G/B)$-structure on a dense Zariski open subset $Y^\circ \subset Y$. The charts do not extend through $Y-Y^\circ$ but the differential equation satisfied by the charts can be extended rationally on $Y$. For this reason rational  $(G, G/B)$-structures can be seen as singular $(G, G/B)$-structures with tame singularities.
    
  The converse is false. Consider the punctured affine complex line $\mathbb C \setminus \{0\}$. Solutions of the differential equation $c'(x) = \exp(1/x) $ are charts of a $(\mathbb G_a, \mathbb A^1)$ structure (i.e. a translation structure) on $\mathbb C \setminus \{0\}$. As the set of charts is not the solution set of a rational differential equation, the structure is not a rational one.
    
\end{rem}

Since $\dim Y = \dim G/B$, one has an isomorphism from $J^\ast(Y, G/B)$ to  $J^\ast(G/B, Y)$. If $\mathscr C$ is a $(G, G/B)$-structure, then its image under this isomorphism is denoted by $\mathscr{Y}$ and is a finite dimensional (over $\mathbb C$) $\mathcal D_{G/B}$-subvariety of $J^\ast(G/B, Y)$. 
A local analytic solution $\upsilon$ of $\mathscr Y$ is a holomorphic map defined on a Euclidean open subset $U \subset G/B$, i.e., $\upsilon : U \to Y$, whose Taylor expansion $j^\infty(\upsilon): U \to J^\ast(G/B, Y)$ takes values in $\mathscr Y$. These solutions are called uniformizations of the  $(G, G/B)$-structure.

We call $\mathscr Y$ the space of uniformizations of the  $(G, G/B)$-structure $\mathscr C$. As the data $\mathscr C$ and $\mathscr Y$ are equivalent, we will use any of them to denote the $(G,G/B)$-structure.

The following statement is another (more usual) version of the Ax-Schanuel theorem. It follows by applying Corollary \ref{FirstAx} to $\mathscr{C}$.

\begin{cor}\label{SecondAx}
Let $\upsilon $ be a uniformization of a $(G, G/B)$-structure on a $m$-dimensional algebraic variety $Y$ with $G$ sparse. Assume that $\hat \gamma$ is a non constant formal curve on $G/B$ such that 
$$
{\rm tr.deg.}_\mathbb C \mathbb C \left(\hat \gamma, (\partial^\alpha \upsilon)(\hat \gamma) : \alpha \in \mathbb N^m \right) < 1 + \dim G
$$
then the Zariski closure  $\overline{\upsilon(\hat \gamma)}$ is 
contained in a special subvariety of $Y$.
  \end{cor}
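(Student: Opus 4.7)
The plan is to apply Corollary \ref{FirstAx} directly to the principal bundle $\mathscr{C} \to Y^o$ endowed with its flat connection $\nabla$ coming from the $\mathcal D_Y$-structure. The setup step: via the canonical isomorphism $J^\ast(Y, G/B) \cong J^\ast(G/B, Y)$ the uniformization $\upsilon$ corresponds to its inverse jet $y \mapsto j^\infty_y(\upsilon^{-1})$, which is a local section of $\mathscr C$. Because $\upsilon^{-1}$ is a chart of the $(G,G/B)$-structure (equivalently, $\upsilon$ is a solution of $\mathscr Y$), the image of this section is an integral submanifold of the horizontal distribution on $\mathscr C$, hence lies in some horizontal leaf $\mathcal L$ of $\nabla$. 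Composing with $\hat\gamma$, I obtain a formal curve
\[
\hat{\mathcal V}\colon {\rm spf}\,\mathbb{C}[[s]] \longrightarrow \mathcal L \subset \mathscr C,\qquad s \longmapsto j^\infty_{\upsilon(\hat\gamma(s))}(\upsilon^{-1}),
\]
which projects in $Y$ to $\upsilon(\hat\gamma)$ and is non-constant (since $\hat\gamma$ is non-constant and $\upsilon$ is locally biholomorphic), so ${\rm rk}(\hat{\mathcal V}) = 1$.

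Next I would translate the transcendence-degree hypothesis into a dimension bound on the Zariski closure $V \subset \mathscr C$ of $\hat{\mathcal V}$. Since $\mathscr C$ is finite-dimensional of dimension $m + \dim G$, only finitely many jet coordinates are needed to describe it, and the passage between the jet of $\upsilon$ at $p$ and the jet of $\upsilon^{-1}$ at $\upsilon(p)$ is rational in the coordinates (involving only the inverse of the Jacobian, which is invertible because $\upsilon$ is a uniformization). Consequently the function field of $V$ equals $\mathbb{C}(\hat\gamma, (\partial^\alpha \upsilon)(\hat\gamma) : \alpha \in \mathbb{N}^m)$, and so $\dim V$ is exactly the transcendence degree appearing in the hypothesis. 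The hypothesis therefore reads
\[
\dim V < 1 + \dim G = {\rm rk}(\hat{\mathcal V}) + \dim G.
\]

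Finally, Corollary \ref{FirstAx} gives that the projection of $V$ in $Y$ is a $\nabla$-special subvariety. But $\pi\circ\hat{\mathcal V}(s) = \upsilon(\hat\gamma(s))$, so this projection is exactly $\overline{\upsilon(\hat\gamma)}$, yielding the conclusion. The only slightly delicate bookkeeping is identifying the residue field at the generic point of $V$ with the field $\mathbb{C}(\hat\gamma, (\partial^\alpha \upsilon)(\hat\gamma) : \alpha)$; since this comes down to the jet isomorphism and the inverse function theorem, I do not anticipate a genuine obstacle.
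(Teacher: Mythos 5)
Your proof is correct and follows exactly the route the paper indicates (its entire proof of Corollary \ref{SecondAx} is the one-line remark ``It follows by applying Corollary \ref{FirstAx} to $\mathscr C$''). You have supplied the intended bookkeeping accurately: pushing $\hat\gamma$ through the horizontal section $y\mapsto j^\infty_y(\upsilon^{-1})$ of $\mathscr C$, noting ${\rm rk}(\hat{\mathcal V})=1$, and identifying $\dim V$ with the stated transcendence degree via the rational change of jet coordinates between $\upsilon$ and $\upsilon^{-1}$.
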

  
From Corollary \ref{SecondAx} we get the Ax-Schanuel theorem without derivatives as stated for instance in \cite{ASmpt}.

\begin{cor}[\bf Corollary B]\label{thirdAx}
Let $\upsilon$ be a uniformization of an irreducible $(G, G/B)$-structure on an algebraic variety $Y$ with $G$ sparse.
Assume $W \subset G/B \times Y$ is an irreducible algebraic subvariety intersecting the graph of $\upsilon$. Let $U$ be an irreducible component of this intersection such that
$$
\dim W < \dim U + \dim Y.
$$
Then the projection of $U$ to $Y$ is contained in a special subvariety of $Y$.
\end{cor}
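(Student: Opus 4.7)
The plan is to lift $W$ from $G/B \times Y$ up to the principal bundle $\mathscr{C}$ and apply Theorem~\ref{th:AxSh1} directly. By the lemma preceding Corollary~\ref{SecondAx}, over a Zariski open $Y^o \subseteq Y$ the geometric structure $\mathscr{C}$ is a $G$-principal bundle endowed with a flat principal connection $\nabla$; locally, $\upsilon$ is the inverse of a chart $c = \upsilon^{-1} \colon Y^o \to G/B$, and the infinite jet of $c$ parameterizes a $\nabla$-horizontal leaf $\mathcal L \subset \mathscr{C}$.

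First I would introduce the natural evaluation-at-source map
\[
p \colon \mathscr{C} \longrightarrow G/B \times Y, \qquad p\bigl(j^{\infty}_y c\bigr) \;=\; \bigl(c(y),\, y\bigr),
\]
whose fibers are torsors under the $G$-stabilizer of a point of $G/B$, and hence have dimension $\dim B = \dim G - \dim Y$. Thus $\tilde W := p^{-1}(W)$ is an algebraic subvariety of $\mathscr{C}$ of dimension $\dim W + \dim B$ on the relevant component, and $p|_{\mathcal L}$ is a biholomorphism from $\mathcal L$ onto $\mathrm{graph}(\upsilon)$, since both are graphs over $Y^o$. Consequently each irreducible component of $W \cap \mathrm{graph}(\upsilon)$ lifts to an irreducible component of $\tilde W \cap \mathcal L$ of the same dimension; let $\tilde U$ denote the lift of the given component $U$.

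Next, using $\dim B + \dim Y = \dim G$, the hypothesis $\dim W < \dim U + \dim Y$ becomes
\[
\dim \tilde W \;=\; \dim W + \dim B \;<\; \dim U + \dim Y + \dim B \;=\; \dim \tilde U + \dim G,
\]
which is precisely the hypothesis of Theorem~\ref{th:AxSh1} applied to $V = \tilde W$ and the component $\tilde U$ of $V \cap \mathcal L$. That theorem yields a $\nabla$-special subvariety of $Y$ containing $\pi(\tilde U)$. Since $p|_{\mathcal L}$ intertwines $\pi \colon \mathcal L \to Y^o$ with the projection $G/B \times Y \to Y$ onto the second factor, one has $\pi(\tilde U) = \mathrm{pr}_{Y}(U)$, and the conclusion follows.

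The main obstacle is the rational/generic bookkeeping: $\tilde W$ need not be pure-dimensional and $\nabla$ is only regular over $Y^o$, so one has to ensure the chosen component $\tilde U$ attains the expected dimension $\dim U$ and meets the open set lying over $Y^o$. The graph-trivialization property of $p|_{\mathcal L}$, together with the fact that the domain of a uniformization is dense in $G/B$, reduce these to routine verifications; once they are in place, the whole argument is simply a translation of the product-space hypothesis into the bundle hypothesis of Theorem~\ref{th:AxSh1}.
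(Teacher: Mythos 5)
Your proposal is correct and is essentially the paper's own argument: your map $p$ is exactly the $0$-jet projection $j^0\colon \mathscr{C}\to G/B\times Y$ that the paper uses, you compute $\dim p^{-1}(W)=\dim W+\dim B$ in the same way, and you translate the hypothesis $\dim W<\dim U+\dim Y$ into $\dim\tilde W<\dim\tilde U+\dim G$ before invoking Theorem~\ref{th:AxSh1}. The only cosmetic difference is that you cite Theorem~\ref{th:AxSh1} directly rather than the paper's (slightly looser) reference to Corollary~\ref{SecondAx}, which is arguably the cleaner citation since $\tilde U$ need not be a curve.
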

\begin{proof}
Let $j^0 : \mathscr{C} \to G/B \times Y $ be the $0$-jet projection. Then $(j^0)^{-1}(W)$ intersects the graph of the jet of $\upsilon$ on $\tilde U$ such that $j^0(\tilde U) = U$.

Now $\dim ({(j^0)}^{-1}(W)) = \dim W + \dim B < \dim U + \dim X + \dim B = \dim \tilde U + \dim G$. Using Corollary \ref{SecondAx} the result follows.
\end{proof}

We also obtain the following.
\begin{cor}
Let $\upsilon$ be a uniformization of a $(G, G/B)$-structure on an algebraic variety $Y$ with $G$ sparse.
Let $A \subset G/B$ be an irreducible algebraic subvariety. If $\upsilon(A)$ has a proper Zariski closure in $Y$, then  $\upsilon(A)$ is contained in a special subvariety of $Y$.
\end{cor}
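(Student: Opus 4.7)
The plan is to reduce this statement to Corollary B (Corollary \ref{thirdAx}) by choosing a suitable algebraic subvariety $W \subset G/B \times Y$. Take
$$
W = A \times \overline{\upsilon(A)},
$$
where $\overline{\upsilon(A)}$ denotes the Zariski closure of $\upsilon(A)$ in $Y$. Since $A$ and $\overline{\upsilon(A)}$ are both irreducible, so is $W$. By construction, for each $a \in A$ in the domain of $\upsilon$, the point $(a,\upsilon(a))$ belongs both to $W$ and to the graph of $\upsilon$, so $W$ meets the graph of $\upsilon$ nontrivially; indeed, the analytic set $\{(a,\upsilon(a)) : a \in A\cap \dom(\upsilon)\}$ is contained in $W \cap \Gamma_\upsilon$ and has dimension $\dim A$. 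Let $U$ be an irreducible component of $W \cap \Gamma_\upsilon$ containing a Zariski-open piece of this analytic set; then $\dim U \geq \dim A$.

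Next I would verify the dimension inequality required by Corollary B. The hypothesis that $\upsilon(A)$ has a proper Zariski closure in $Y$ means $\dim \overline{\upsilon(A)} \leq \dim Y - 1$. Therefore
$$
\dim W \;=\; \dim A + \dim \overline{\upsilon(A)} \;<\; \dim A + \dim Y \;\leq\; \dim U + \dim Y,
$$
so the strict inequality of Corollary B is satisfied. Applying that corollary, the projection of $U$ to $Y$ is contained in a $\nabla$-special subvariety $X \subset Y$.

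Finally, the projection of $U$ to $Y$ contains $\upsilon(A\cap\dom(\upsilon))$, a set whose Zariski closure equals $\overline{\upsilon(A)}$. Since $X$ is Zariski closed in $Y$ and contains $\upsilon(A\cap \dom(\upsilon))$, it contains the whole of $\upsilon(A)$ (and in fact $\overline{\upsilon(A)}$), giving the conclusion. The only delicate point in this argument is the choice of component $U$, since $W\cap \Gamma_\upsilon$ is an intersection of an algebraic variety with an analytic graph; however, the convention established in the paper (an irreducible component of such an intersection is an irreducible analytic component, and its dimension is bounded below by that of any analytic subset it contains) makes the step $\dim U \geq \dim A$ immediate, so there is no genuine obstacle.
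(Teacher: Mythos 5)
Your proof is correct and follows essentially the same route as the paper: reduce to Corollary B by a judicious choice of $W$ and the dimension count $\dim A + \dim\overline{\upsilon(A)} < \dim A + \dim Y$. The only (cosmetic) difference is that the paper takes $W$ to be the Zariski closure of the graph of $\upsilon|_A$ rather than the full product $A \times \overline{\upsilon(A)}$; since that closure sits inside the product, the paper's $W$ has at most the dimension you compute, and the inequality works out identically.
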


\begin{proof}
Let $U \subset G/B \times Y$ be the graph of $\upsilon|_A$ and $V$ be its Zariski closure. Then
$$
\dim V \leq \dim A + \dim \overline{\upsilon(A)} < \dim U + \dim X.
$$\end{proof}

\section{Differential equations and covering maps}\label{coveringMap}

Our main results of the previous section have applications to algebraicity problems for certain analytic functions which are covering maps of quotients of complex analytic spaces by discrete groups; including, for instance, Shimura varieties. There is a subtle difference in the setup to these results from our main theorems of the previous section. In both the Shimura-type settings as well as Scanlon's generalization \cite{Scanlon}, one starts with additional data compared to our setup from the previous section. In all of the covering map situations, one has an open subset $U$ of a complex algebraic variety admitting an action of an algebraic group $G$ such that $G(\m R)$ acts by isometries on $U$. The covering maps considered realize the quotient of $U$ by a discrete subgroup $\Gamma \leq G(\m R)$ as an algebraic variety. Many of these results fit into our framework from the previous section - that is, $\Gamma$ is a discrete Zariski dense subgroup of a sparse algebraic group. 

But as we have noted in the introduction, rather few sparse (or even simple) algebraic groups $G$ arise in this way for Shimura varieties or, as far as we know, in Scanlon's setup. We next give a concrete example to which our results from the previous section apply, but which cannot be put in the setting of covering maps. 

\begin{exam}\label{Appell} Consider Appell's bivariate hypergeometric system $F_1$. It is a rank 3 linear connection on a vector bundle on $Y = \mathbb{CP}_1 \times \mathbb{CP}_1$ with singularities along 7 lines: $\{0,1,\infty\} \times \mathbb{CP}_1$, $\mathbb{CP}_1 \times \{0,1,\infty\}$ and the diagonal. This connection gives a rational $({\rm PSL}_3(\mathbb C), \mathbb{CP}_2)$-structure on $Y$ called $\mathscr C_{hyp}$ or $\mathscr Y_{hyp}$ (see \cite[Chapter 6]{Yoshida}).
 
 Only for very particular and clever choices of the exponents in the $F_1$ system (cf. \cite[Theorem 11.4]{delignemostow} are solutions of $\mathscr Y_{hyp}$ built from the quotient of the ball $\mathbb B \subset \mathbb{CP}_2$ by a lattice $\Gamma \subset {\rm PSU}(2,1) \subset {\rm PSL_3(\mathbb C)}$. That is, for most choices of the system $F_1$, the example does not fit into the context of the covering maps but our results still applies whenever ${\rm Gal}(\mathscr C_{hyp})={\rm PSL}_3(\mathbb C)$.
\end{exam}

In subsection \ref{scanloncov} we explain how our results apply to the setting of Scanlon \cite{Scanlon}. In particular, our results give a solution of Problem 6.2 of \cite{Scanlon} in the case of sparse groups $G$. Following this, we give short expositions of how our results apply to the settings of modular curves \ref{covcurves}, Shimura varieties \ref{covshim}, and ball quotients \ref{covball}. These sections are somewhat expository in the sense that they are special cases of subsection \ref{scanloncov}. 

\subsection{From covering maps to $(G,G/B)$-structures} \label{scanloncov}

In this section, we follow much of the notation and assume the background of \cite{Scanlon}, where much of the work is stated in terms of classical differential algebra. We remind the reader of some of these terms next. Scanlon's article contains numerous examples which were classically known, and we refer the reader to the discussion of the history of such results in \cite{Scanlon}. 

The Kolchin topology over a differential field $F$ is the differential version of the Zariski topology with Kolchin closed sets of affine $n$-space corresponding bijectively to differential radical ideals of the ring $F \{x_1, \ldots , x_n \}$ of differential polynomials over $F$. A function is differential algebraically constructible if its graph can be expressed as a finite Boolean combination of closed sets in the Kolchin topology.

Let $G$ be a complex algebraic group, $B \subset G$ an algebraic subgroup and $X=G/B$ the associated homogeneous space. Let $U$ be an open complex submanifold of $X(\m C)$ and $ \Gamma $ be a Zariski dense subgroup of $G (\m C)$ with the property that the induced action of $\Gamma$ on $X$ preserves $U$. Assume that we have a complex analytic map $\upsilon :U \to Y$ which is a covering map of the complex algebraic variety $Y$ expressing $Y(\m C)$ as $\Gamma \setminus U$. In this section, we show that the determinations of $\upsilon^{-1}$ are charts of an algebraic $(G,G/B)$-structures. The monodromy of this locally defined function is the group $\Gamma$.  
Under those assumptions Scanlon shows that there is a differential algebraically constructible function $\tilde\chi: X \to Z$, 
for some algebraic variety $Z$, called the {\em generalized Schwarzian derivative} associated to $\upsilon$ such that for any differential field $F$ having field of constants $\m C$ and points $a,b\in X(F)$ one has that $\tilde\chi(a)=\tilde\chi(b)$ if and only if $a=gb$ for some $g\in G(\m C)$. For the situation we have in mind, the restriction to differential fields $F$ with $m : = \dim X$ commuting derivatives is enough. One should note that in many instances, the map $\tilde\chi$ was already known in the literature.

From Scanlon's construction, it follows that the generalized Schwarzian derivative is defined on an order $k$ jet space\footnote{For the definition of jet spaces, see section 2.1 of \cite{Scanlon}; roughly speaking, these are affine bundles of a complex manifold which are higher order tangent spaces. Here  $J^\ast_{k,m}(X)$ refers to the space of invertible $k$-jets of holomorphic maps from $(\mathbb C^m,0)$ to $X$.}$\tilde\chi : J^\ast_{k,m}(X) \to Z$. Thus the map $\chi:=\tilde\chi \circ \upsilon^{-1} : J^\ast_{k,m}(Y) \to Z$ is a well-defined analytic map and induces a differential analytic map $Y(M) \rightarrow Z(M)$ for $M$ any field of meromorphic functions in $m$ variables.

Now, also assume that the restriction of $\upsilon$ to some set containing a fundamental domain is definable in an o-minimal expansion of the reals as an ordered field. One of the main results of \cite[Theorem 3.12]{Scanlon} is that under this assumption the function $\chi:=\tilde\chi \circ \upsilon^{-1}$, for any choice of a branch of $\upsilon^{-1}$, is also differential algebraically constructible. 

The function $\chi$ is called the {\em generalized logarithmic derivative} associated to $\upsilon$.

\begin{exam}
Consider the case when $G={\rm PSL}_2$ and $B$ its subgroup of lower triangular matrices so that $X(\mathbb{C})=\mathbb{CP}_1$. Let $U$ be the complex upper half plane $\mathbb{H}$ seen as an open subset of $\mathbb{CP}_1$ and $\Gamma\subseteq{\rm PSL}_2(\mathbb{R})$ a Fuchsian group of first kind. Then $Y$ is an algebraic curve. In this case, it is classically known that $\tilde\chi : J_{3,1}^\ast(X) \to \mathbb C$ can be taken to be the Schwarzian derivative $S(x)=\left(\frac{x''}{x'}\right)' -\frac{1}{2}\left(\frac{x''}{x '}\right)^2$ and $\chi : J_{3,1}^\ast(Y) \to \mathbb C$ is given by $S(y) + R(y)y'^2$ with $R(y) = S(\upsilon^{-1})$.

The definability property of $\upsilon$ in Scanlon's theorem is analogous to the hypothesis on solutions $\upsilon^{-1}$ of the linear differential equation to have moderate growth at singular points. Both implies that $R$ is a rational function. 
\end{exam}

Using $\chi$ and $\tilde\chi$, we can define the differential equation satisfied by $\upsilon$ and $\upsilon^{-1}$. From the composition of jets one gets two maps, $c_1 :J^\ast_{k,m}(X)  \times J^\ast_{k,m}(Y) \to J^\ast_k (X,Y)$, where $c_1(u,v) = v\circ u^{-1}$ and $c_2 : J^\ast_{k,m}(X)  \times J^\ast_{k,m}(Y) \to J^\ast_k (Y,X)$, where $c_2(u,v) = u \circ v^{-1}$. The constructible algebraic subvariety given by $\chi - \tilde \chi = 0$ projects by $c_1$ on a constructible algebraic subvariety of $J^\ast_k (X,Y)$ and by $c_2$ on a constructible algebraic subvariety  of $J^\ast_k(Y,X)$

Let us assume that $\bar t =(t_1, \ldots, t_m)$ are the coordinates on $U \subset X=G/B$ for some algebraic subgroup $B$ of $G$ and $\bar y=(y_1, \ldots, y_m)$ be coordinates on $Y$. By construction, we have that $\upsilon(\bar t)$ satisfies the constructible algebraic differential equation
$$ \mathscr Y = c_1\left(\{\chi(\bar y)-\tilde\chi(\bar t)= 0 \}\right) \subset J^\ast_k (G/B,Y)$$
and the inverse branches $\upsilon^{-1}(\bar y)$ satisfies
$$\mathscr C = c_2\left( \{\chi(\bar y)-\tilde\chi(\bar t)=0\}\right) \subset J^\ast_k(Y,G/B).$$

\begin{rem}
The $\ast$ appearing in the jet space $J^\ast_k$ means that we are adding to the explicit equations above inequalities ensuring that the rank of the Jacobian matrix of solutions is $m$.
\end{rem}

\begin{prop}
The equations above define a rational $(G, G/B)$-structure $\mathscr C$ on $Y$.
\end{prop}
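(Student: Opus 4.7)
The plan is to verify the two defining clauses of a rational $(G,G/B)$-structure recalled above: that $\mathscr C$ is a $\mathcal D_Y$-subvariety of $J^\ast(Y,G/B)$, and that on a suitable Zariski open $Y^o \subset Y$ the restriction $\mathscr C|_{Y^o}$ is a $G$-principal sub-bundle of $J^\ast(Y^o,G/B)$.

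For the $\mathcal D_Y$-stability, the equation $\chi(\bar y)-\tilde\chi(\bar t)=0$ is by construction differential algebraic, since both $\chi$ and $\tilde\chi$ are differential algebraically constructible. Differentiating it with respect to any vector field on $Y$ yields a relation in the higher-order jet which lies in the differential ideal generated by the original equation, so the locus it defines in $J^\ast_{k,m}(Y)\times J^\ast_{k,m}(X)$ is prolongation-stable. The jet-composition map $c_2$ is itself a morphism of $\mathcal D$-varieties (composition of jets commutes with differentiation on the source), so pushing forward produces a $\mathcal D_Y$-subvariety $\mathscr C \subset J^\ast(Y,G/B)$.

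For the principal bundle structure I would take $Y^o$ to be the Zariski open locus on which $\upsilon$ admits holomorphic local inverse branches, $\chi$ is regular, and $c_2$ has the expected fibre behaviour. Fix $y\in Y^o$ and a local branch $c_0=\upsilon^{-1}$ near $y$. The defining property of the generalised Schwarzian, stated in the discussion preceding the proposition, is that two $k$-jets $j^k_y c_1, j^k_y c_2$ with values in $G/B$ satisfy $\tilde\chi(j^k_y c_1)=\tilde\chi(j^k_y c_2)$ iff $c_1=g\cdot c_2$ for a unique $g\in G(\mathbb C)$ acting by left translation on the target. Applied with $c_2$ being $c_0$ and $\tilde\chi(j^k_y c_2)=\chi(\bar y)(y)$, this identifies the fibre $\mathscr C_y$ with the orbit $G\cdot j^k_y c_0$: the action is free because $G$ acts effectively on $G/B$, and transitive by Scanlon's characterisation. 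The local branch $c_0$ then provides a section that trivialises the bundle, and because the $G$-action is intertwined by $c_2$ with the target-postcomposition action on $J^\ast(Y^o,G/B)$, one obtains that $\mathscr C|_{Y^o}\to Y^o$ is a $G$-principal sub-bundle.

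The main obstacle I expect is bookkeeping at the jet-space level: one has to check that the left $G$-action used to define a $(G,G/B)$-structure on $J^\ast(Y,G/B)$ really is the one obtained by pushing the $G$-action on $J^\ast_{k,m}(X)$ through $c_2$, and that the image of $c_2$ is an honest algebraic subvariety on $Y^o$ rather than merely a constructible set. Both are formal consequences of Scanlon's construction of $\tilde\chi$ together with the o-minimal definability hypothesis that gives $\chi$ differential algebraic constructibility, but they rely on tracking carefully that the ``choice of branch of $\upsilon^{-1}$'' ambiguity is absorbed exactly by the $G$-action, which is the content of $\tilde\chi$ being a complete invariant for post-composition by $G$ on jets into $G/B$.
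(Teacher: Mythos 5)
Your argument is correct and takes essentially the same route as the paper: both rest on Scanlon's characterization of $\tilde\chi$ as a complete invariant for the left $G$-action on jets into $G/B$ (to identify the fibres of $\mathscr C$ with $G$-orbits and so obtain the principal-bundle structure) together with the differential-algebraic constructibility of $\chi$ (to obtain the $\mathcal D_Y$-structure and a suitable Zariski open $Y^o$). The paper's only streamlining is to note first that $\chi(\bar y)$ is a \emph{rational} function $R$ on $Y$, so that over $Y^o=\operatorname{dom}(R)$ the defining equation reads simply $\tilde\chi(\bar t)=R(y)$ in $J^\ast_k(Y,G/B)$, which makes the ``rational'' in rational $(G,G/B)$-structure explicit and sidesteps the bookkeeping concerns you flag at the end.
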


\begin{proof}

Since $\bar y$ are coordinates on $Y$, using $c_2$ it follows that $\chi(\bar y)$ is a rational function $R$ on $Y$. Let $Y^\circ$ be the domain of $R$ on which our differential equation can be written $\tilde \chi (\bar t) = R(y)$ in $J^\ast_k(Y,G/B)$. By construction, it follows that $\mathscr C$ coincides with the differential subvariety of $J^\ast(Y,G/B)$ defined by this equation on the open subset on which the constructible set is an algebraic set.

By Scanlon's construction, we have a map $\mathscr C \times G \to \mathscr C$ ensuring that $\mathscr C$ is a $G$-principal bundle on $Y^\circ$. As a smooth finite dimensional $\mathcal D_{Y^\circ}$-subspace of $J^\ast(Y^\circ,G/B)$, $\mathscr C$ gives rise to a connection. The properties of $\bar \chi$ ensure that this connection is $G$-invariant. 
\end{proof}

It hence follows that our Ax-Schanuel Theorems (corollaries \ref{SecondAx} and \ref{thirdAx}) hold in the case of covering maps given in the Scanlon theory when $G$ is sparse. In the coming subsections, we will describe the settings in which existing versions of an Ax-Schanuel Theorem for covering maps exists and have some overlap with our results. 

\subsection{Modular curves} \label{covcurves}
Let $j: \m H \rightarrow \m A^1(\m C)$ be the classical modular $j$-function. 
In the notation above we take $G={\rm SL}_2$ and $$B=\left\{\begin{pmatrix}
a&c\\
0&d
\end{pmatrix}:ad=1\right\}$$ the subgroup of upper triangular matrices so that $X={\rm SL}_2(\m C)/B\cong\mathbb{CP}_1$. As well-known, if we take $\Gamma={\rm SL}_2(\m Z)$, then the quotient $Y(1)=\Gamma \backslash  \m H$ can be identified with the affine line $\m A^1(\m C)$. We take $U$ to be the open subset of $\m H $ such that $j:U \rightarrow \m A^1(\m C)\setminus\{0,1728\}$ is a covering map. 

The restriction of $j$ to the domain $F=\{z\in\m H\;:\;|{\rm Re}(z)|\leq\frac{1}{2}\;\text{ and }\;{\rm Im}(z)\geq\frac{\sqrt{3}}{2}\}$, which contain a fundamental domain, is definable in $\m R_{an,exp}$. Hence $j$ is a solution to a $(G,G/B)$-structure as describe above. In this case, this structure can be taken to be the well-known Schwarzian differential equation satisfied by $j$.

In this setting, the Ax-Schanuel Theorem is a result of Pila and Tsimerman \cite[Theorem 1.1]{AXS}

\begin{thm}
Let $V \subset (\mathbb{ C P}_1 \times Y(1))^n$ be an algebraic subvariety, $D$ the Cartesian product of graph of $j$ in $(\mathbb{ CP}_1\times Y(1))^n$  and $U$ be a component of $V \cap D$ . Then $\dim U = \dim V - n $ unless the projection of $U$ to $Y(1)^n$ is contained in a proper weakly special subvariety of $Y(1)^n.$
\end{thm}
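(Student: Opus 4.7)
The plan is to apply Corollary \ref{thirdAx} (the Ax--Schanuel statement for $(G,G/B)$-structures proved earlier in the excerpt) to the uniformization $j^n\colon \mathbb{H}^n \to Y(1)^n$, taking $G = \mathrm{PSL}_2^n$ and $G/B = \mathbb{CP}_1^n$, and then to identify the $\nabla$-special subvarieties of $Y(1)^n$ that the corollary produces with weakly special subvarieties in the classical sense.

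First I would realize $j^n$ as the uniformization of an irreducible $(G,G/B)$-structure. The single-variable $j$ fits Scanlon's setup from Section \ref{coveringMap}: it is a covering map definable in $\mathbb{R}_{\mathrm{an},\mathrm{exp}}$ on a set containing a fundamental domain, and therefore uniformizes a $(\mathrm{PSL}_2, \mathbb{CP}_1)$-structure $\mathscr{C}$ on $Y(1)$ (concretely, the Schwarzian equation for $j$, as in the example of Section \ref{coveringMap}). Taking fibred products of the underlying $G$-principal bundle and of the associated flat principal connection gives an irreducible $(\mathrm{PSL}_2^n, \mathbb{CP}_1^n)$-structure on $Y(1)^n$ whose uniformization is $j^n$. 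The group $\mathrm{PSL}_2^n$ is semisimple, hence sparse by the example following Definition \ref{def:NZDS}, so the hypotheses of Corollary \ref{thirdAx} are satisfied.

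Next I would carry out the intersection dimension accounting. The graph $D$ of $j^n$ has complex dimension $n$ inside the $2n$-dimensional ambient space $(\mathbb{CP}_1 \times Y(1))^n$, so any component $U$ of $V \cap D$ satisfies $\dim U \geq \dim V + n - 2n = \dim V - n$. The dichotomy in the theorem is therefore vacuous on one side, and on the other side $\dim U \neq \dim V - n$ is equivalent to the atypicality condition
$$\dim V \,<\, \dim U + n \,=\, \dim U + \dim Y(1)^n,$$
which is exactly the hypothesis of Corollary \ref{thirdAx}. Applying the corollary, the projection of $U$ to $Y(1)^n$ is contained in a $\nabla$-special subvariety $X \subsetneq Y(1)^n$.

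The remaining—and expected main—obstacle is matching $\nabla$-special with weakly special. I would invoke \cite[Theorem 3.2]{Chiu}, which in the setting of (mixed) Shimura varieties, of which $Y(1)^n$ is the prototype, proves exactly that maximal $\nabla$-special subvarieties are weakly special; this is also how the authors recover Corollary B$^*$ in the introduction. Alternatively, one argues directly using the structure of proper algebraic subgroups of $\mathrm{PSL}_2^n$: each such subgroup either kills a factor (forcing the corresponding coordinate on $X$ to be constant) or contains the graph of an isogeny between two factors (which, after uniformization by $j$, forces a modular relation $\Phi_N(y_i,y_j)=0$), and these two kinds of constraints generate precisely the weakly special subvarieties of $Y(1)^n$. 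Either route yields that the projection of $U$ is contained in a proper weakly special subvariety, completing the proof.
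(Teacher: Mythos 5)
Your primary route matches the paper's own derivation. The paper does not write out a proof of this statement in Section 4 (it is simply cited as \cite[Theorem~1.1]{AXS}), but it explicitly recovers it as a special case of Corollary~B$^*$: apply Corollary~\ref{thirdAx} to the uniformization of the Shimura variety $Y(1)^n = \Gamma^n\backslash\mathbb{H}^n$ with $G=\mathrm{PSL}_2^n$ (semisimple, hence sparse, with full Galois group), and then invoke \cite[Theorem~3.2]{Chiu} to pass from $\nabla$-special to weakly special. That is exactly your argument, including the observation that $\dim U > \dim V - n$ is equivalent to the atypicality hypothesis $\dim V < \dim U + \dim Y(1)^n$ of Corollary~\ref{thirdAx}.

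Your ``alternative'' route, however, is not a genuine substitute and glosses over the crux. The Goursat--Kolchin analysis of proper subgroups of $\mathrm{PSL}_2^n$ (Theorem~\ref{Binary}) yields, after splitting off killed factors, a correspondence curve $X_{ij}\subset Y(1)\times Y(1)$ on which the two pulled-back connections become gauge-isomorphic. By itself this does \emph{not} identify $X_{ij}$ with a modular curve $\Phi_N=0$: one must still show that the resulting gauge isomorphism is induced by an element of the commensurator of $\mathrm{SL}_2(\mathbb{Z})$, which is the entire arithmetic content of the theorem (equivalently, the bi-algebraicity characterization of weakly special curves). The paper fills that gap either by citing Chiu's result --- the route you lead with --- or, for the derivative-strengthened version (Theorem~D), by the strong minimality and commensurator analysis carried out in Sections 6--7. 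So your primary route is sound, but the sketched alternative on its own would be circular or incomplete.
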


Replacing $\Gamma$ in the above results with $\Gamma(N),$ the kernel of the reduction mod $N$ map ${\rm SL}_2 ( \m Z) \rightarrow {\rm SL}_2 (\m Z /N \m Z)$ (also replacing $j$ with suitable $j_N$ and $Y(1)$ with $Y(N)$), one obtains the same result; $j$ and $j_N$ are interalgebraic as functions over $\m C$, i.e., $\mathbb{C}(j)^{\rm alg}=\mathbb{C}(j_N)^{\rm alg}$. Pila and Tsimerman \cite[Theorems 1.2 and 1.3]{AXS} prove the more general version of Ax-Schanuel which includes the first and second derivatives of $j$ (and replaces $n$ with $3n$). Our work gives a uniform proof of this result for $\Gamma \subset {\rm SL}_2 (\m R)$ any Fuchsian group of the first kind.

\subsection{Shimura Varieties} \label{covshim}  We follow closely the exposition given in \cite{BakkerTsimermann}; in particular our definition of a Shimura variety and of the weakly special subvarieties are set up identically to Section 4.2 of \cite{BakkerTsimermann}. Let $G$ be a connected semi-simple algebraic $\mathbb{Q}$-group and $K$ a maximal compact subgroup of $G(\m R)$ chosen so that $\Omega=G(\mathbb{R})/K$ is a bounded symmetric domain. It is known (cf. \cite[Proposition 7.14]{Helgason}) that the compact dual $\check\Omega$ of $\Omega$ is given as the quotient $\check\Omega=G(\mathbb{C})/B$ for a  parabolic subgroup $B$ and is a homogeneous projective variety. One can always assume that $K\subset B$, so that $\Omega$ is a semi-algebraic subset of $\check\Omega$. 

Given an arithmetic lattice $\Gamma\subset G(\m Q)$, the analytic quotient $Y:=\Gamma \backslash \Omega=\Gamma \backslash G(\mathbb{R})/K$ has the structure of an algebraic variety and is called a pure (connected) Shimura variety. The quotient map $q:\Omega\rightarrow Y :=\Gamma \backslash \Omega$ is a covering map\footnote{To obtain a covering map, one might need to restrict $q$ to an open subset $U$ of $\Omega$ avoiding ramification points of the original map.} and the result \cite[Theorem 1.9]{HyperALW} shows that it is definable in $\m R_{an,exp}$ on some fundamental domain. Hence $q$ is a solution to a $(G, G/B)$-structure on $Y$  as defined above.

We fix $Y = \Gamma \backslash\Omega$ a connected pure Shimura variety and $q:\Omega\rightarrow Y$ the quotient map.
\begin{defn} \label{weaklyspec} \cite[4.2.2]{BakkerTsimermann}
A {\em weakly special subvariety} of $Y$ is a Shimura variety $Y'$ given as
$$Y'=\Gamma' \backslash G'(\mathbb{R})/K'$$
where $G'$ is an algebraic $\m Q$-subgroup of $G$, the group $\Gamma'=\Gamma\cap G'(\m Q)$ is an arithmetic lattice, and $K' = K \cap G'(\m R)$.
\end{defn}
From the definition, $Y'$ is analytic subvariety, but a result of Ullmo and Yafaev \cite{UYspecial} shows that such $Y'$ is algebraic and in fact such $Y'$ are exactly the bialgebraic subvarieties for the uniformization map $q.$ That is, there is an algebraic subvariety $V$ of $\check\Omega$ such that $Y'=q(V\cap\Omega)$ (see Section \ref{comparisonsection} below).

\begin{thm} \label{weaklyspeccomp} \cite[Theorem 1.2]{UYspecial} An irreducible subvariety $Z \subset Y$ is weakly special if and only if some (all) analytic irreducible components of $q^{-1} (Z)$ are algebraic. 
\end{thm}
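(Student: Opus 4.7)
The forward direction is the easier one and admits a direct construction. Assume $Z$ is weakly special, so $Z = \Gamma' \backslash G'(\mathbb R)/K'$ in the notation of Definition \ref{weaklyspec}. I would consider the $G'(\mathbb C)$-orbit of the base point $x_0 \in \check\Omega = G(\mathbb C)/B$. Since $G'$ is an algebraic $\mathbb Q$-subgroup of $G$, this orbit is isomorphic to $G'(\mathbb C)/(G'(\mathbb C)\cap B)$ and is therefore a closed algebraic subvariety $V \subset \check\Omega$. One then checks that $V \cap \Omega$ equals the $G'(\mathbb R)$-orbit of $x_0$ in $\Omega$, which projects onto $Z$ via $q$ and is in fact a connected component of $q^{-1}(Z)$ (all other components being $\Gamma$-translates, hence also algebraic because $\Gamma$ acts on $\check\Omega$ through $G(\mathbb C)$).

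For the reverse direction, the plan is monodromy-theoretic, following Ullmo--Yafaev. Let $\tilde Z$ be a component of $q^{-1}(Z)$ with $\tilde Z = V \cap \Omega$ for an irreducible algebraic $V \subset \check\Omega$, and define the real stabilizer
\[
H := \{ g \in G(\mathbb R) : g\cdot \tilde Z = \tilde Z\}.
\]
Using the algebraicity of $V$ and the fact that the action of $G(\mathbb C)$ on $\check\Omega$ is algebraic, I would first argue that $H$ is the group of real points of a $\mathbb C$-algebraic subgroup $H_{\mathbb C}$ of $G$. The key step -- and the main obstacle -- is to show that $H_{\mathbb C}$ descends to a $\mathbb Q$-subgroup. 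The input here is arithmeticity: the subset $\Gamma_{\tilde Z} := \{\gamma \in \Gamma : \gamma\tilde Z = \tilde Z\}$ is a lattice in the connected component $H^\circ$ (because the images $\gamma\tilde Z$ partition $q^{-1}(Z)$ into finitely many $\Gamma$-orbits of components, and $Z$ itself is swept out transitively), so $\Gamma_{\tilde Z}$ is Zariski dense in $H_{\mathbb C}^\circ$ by Borel density. Combining with $\Gamma_{\tilde Z} \subset G(\mathbb Q)$ forces $H_{\mathbb C}^\circ$ to be defined over $\mathbb Q$.

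Once $H_{\mathbb C}^\circ$ is shown to be a $\mathbb Q$-algebraic subgroup, I would set $G' := H_{\mathbb C}^\circ$, $\Gamma' := \Gamma \cap G'(\mathbb Q)$, and $K' := K \cap G'(\mathbb R)$, and verify the presentation $Z = \Gamma' \backslash G'(\mathbb R)/K'$: surjectivity of $G'(\mathbb R) \cdot x_0 \to \tilde Z$ comes from $G'(\mathbb R)$ acting transitively on $\tilde Z$ (its dimension was forced to match $\dim \tilde Z$ plus the stabilizer), and the quotient by $\Gamma'$ gives $Z$ by construction. The hard part is entirely in upgrading $H_{\mathbb C}$ from $\mathbb C$-algebraic to $\mathbb Q$-algebraic, where one needs genuine input from the theory of arithmetic groups rather than just the differential-algebraic machinery of the paper; one cannot bypass this using Corollary \ref{corB} alone, since that corollary only places $Z$ inside some $\nabla$-special subvariety without identifying its precise homogeneous structure.
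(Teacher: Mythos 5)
The paper states this theorem as a citation of Ullmo--Yafaev \cite[Theorem 1.2]{UYspecial}; it contains no internal proof for your attempt to be compared against. The only directly related material is Subsection \ref{comparisonsection}, where the paper observes that bi-algebraic subvarieties are $\nabla$-special (via Corollary \ref{thirdAx}) and appeals to Chiu to pass from $\nabla$-special to weakly special.

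Your sketch is along the lines of the actual Ullmo--Yafaev argument, and your identification of Borel density as the mechanism that upgrades the $\mathbb{C}$-algebraic stabilizer $H_{\mathbb{C}}^\circ$ to a $\mathbb{Q}$-group is exactly right; that is the heart of the matter. Two details you pass over should be spelled out. In the forward direction, the $G'(\mathbb{C})$-orbit being closed is not automatic (orbits of algebraic group actions are only locally closed in general); it follows here because $G'(\mathbb{C})\cap B$ is parabolic in $G'(\mathbb{C})$ --- a consequence of $G'$ being of Hermitian type with $K'\subset B$ --- so the orbit is the compact dual $\check\Omega'$, which is projective and hence closed. In the reverse direction, the dimension count only shows that the orbit $H^\circ(\mathbb{R})\cdot x_0$ is open in $\tilde Z$; to get transitivity you also need the orbit to be closed, which one gets from properness of the $G(\mathbb{R})$-action on $\Omega$ (since $K$ is compact), or, as Ullmo--Yafaev do, by first showing $\tilde Z$ is totally geodesic and then invoking the classification of totally geodesic subdomains. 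You are also right that Corollary \ref{corB} cannot substitute for the arithmetic step: the $\nabla$-special locus is a priori coarser than the weakly special one, and the paper itself leans on Chiu's result to bridge them.
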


An essential part of Pila's strategy for attacking various diophantine problems associated with the geometry of certain analytic covering maps (see e.g. \cite{Pila1}) is to identify the (weakly) special subvarieties of $Y$.

In this setting, the most general transcendence result to which our applications are related comes from \cite{ASmpt}. For $k \in \mathbb N$, $k\geq 2$, let $J_k (\check \Omega , Y)$ be the variety of jet of order $k$ of maps from $\check \Omega$ to $Y$, $j^k(q) : \Omega \to J_k (\check \Omega , Y)$ be the section given by Taylor expansions of $q$ and $D$ its graph.

\begin{thm}\cite[Theorem 1.3]{ASmpt} Let $W \subset J_k^\ast (\check \Omega , Y)$ be an algebraic subvariety. Let $U$ be a component of $W \cap D$ of positive dimension. Suppose that $$\dim W < \dim D + \dim G.$$ 
Then the projection of $D$ to $Y$ is contained in a proper weakly special subvariety of $Y$. 
\end{thm}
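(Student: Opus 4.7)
The plan is to derive this statement as a direct specialization of Theorem \ref{th:AxSh1} combined with \cite[Theorem 3.2]{Chiu}, following the template used for Corollary \ref{corB1} in the introduction. Since $G$ is connected semi-simple, it is sparse (see the example following Definition \ref{def:NZDS}), so the hypotheses of Theorem A apply to the $(G,G/B)$-structure attached to the Shimura datum. Following Section \ref{sectionUniformization}, the uniformization $q\colon\Omega\to Y$ defines a rational $(G,G/B)$-structure whose associated $\mathcal D_Y$-subvariety $\mathscr C\subset J^\ast(Y,G/B)$ is a $G$-principal bundle over a Zariski open $Y^\circ\subseteq Y$ carrying a flat $G$-principal connection $\nabla$. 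Dually, $\mathscr Y\subset J^\ast(G/B,Y)$ encodes the PDE satisfied by $q$, and under the canonical isomorphism $J^\ast(Y,G/B)\cong J^\ast(G/B,Y)$ (coming from $\dim Y=\dim G/B$) the truncation of $D=j^k(q)(\Omega)$ to order $k$ sits inside $\mathscr Y$ and corresponds to a $\nabla$-horizontal leaf $\mathcal L$ of $\mathscr C$.

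With this dictionary in place, I would transport the algebraic subvariety $W\subset J_k(\check\Omega,Y)$ to an algebraic subvariety $V$ of the ambient principal bundle, so that the irreducible component $U$ of $W\cap D$ becomes an irreducible component $\mathcal V$ of $V\cap\mathcal L$ of the same dimension. The dimension hypothesis $\dim W<\dim D+\dim G$, coupled with $\dim\mathcal V=\dim U>0$, then yields exactly the inequality $\dim V<\dim\mathcal V+\dim G$ required to invoke Theorem \ref{th:AxSh1}. That theorem outputs a proper $\nabla$-special subvariety $X\subsetneq Y$ containing the projection of $\mathcal V$, and hence of $U$.

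To finish, I would upgrade the conclusion from "$\nabla$-special" to "weakly special" by citing \cite[Theorem 3.2]{Chiu}, which in the (mixed, hence pure) Shimura setting asserts that every maximal $\nabla$-special subvariety is weakly special in the sense of Definition \ref{weaklyspec}. Thus $X$ is contained in a proper weakly special subvariety of $Y$, which is the desired conclusion.

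The core transcendence content is already packaged in Theorem \ref{th:AxSh1}, so the real work here is the bookkeeping identifying $(J_k(\check\Omega,Y),D)$ with $(\mathscr C|_{Y^\circ},\mathcal L)$ and checking that the translation preserves the dimension inequality; one must also handle the codimension-$\geq 2$ singular locus on which $\mathscr C$ is not literally a principal bundle, which is harmless as everything in sight is irreducible and of positive dimension. The main obstacle conceptually is the passage from the intrinsic differential-Galois notion of $\nabla$-special subvariety produced by Theorem A to the classical weakly special notion in the statement; this is exactly what Chiu's theorem provides, and it is the only non-self-contained ingredient in the argument.
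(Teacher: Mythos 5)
Your high-level plan (Theorem~\ref{th:AxSh1} for the principal bundle attached to the Shimura datum, then upgrade $\nabla$-special to weakly special via Chiu) is the right machine, and in fact it is somewhat more apt than the paper's own one-line remark, which routes the derivatives version through Corollary~\ref{thirdAx} even though that corollary is stated only for $W\subset G/B\times Y$, i.e.\ zero-jets. The place your argument actually breaks is the dimension bookkeeping, which you dispatch in a single sentence that is a non-sequitur.

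Concretely, you write that the hypothesis $\dim W<\dim D+\dim G$, \emph{together with $\dim\mathcal V=\dim U>0$}, yields $\dim V<\dim\mathcal V+\dim G$. It does not: $\dim D=\dim\Omega=\dim Y$, so the hypothesis gives $\dim W<\dim Y+\dim G$, and from this together with only the positivity of $\dim U$ you cannot conclude $\dim V<\dim U+\dim G$ unless $\dim U\geq\dim Y$, i.e.\ unless $U$ has the same dimension as the whole leaf. For a generic $W$ of dimension $\dim Y+\dim G-1$ inside the prolongation $\mathscr Y_{k_0}$, the intersection with the horizontal leaf has dimension $\dim Y-1>0$ but is certainly not contained in any proper special subvariety, so the inequality as you've stated it cannot possibly imply the Theorem~A inequality. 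The underlying issue is that the paper's displayed hypothesis is a typo: the intersection-theoretic ``atypicality'' condition in \cite[Theorem 1.3]{ASmpt} is $\dim U>\dim W-\dim G$, equivalently $\dim W<\dim U+\dim G$, and the conclusion should read ``the projection of $U$,'' not of $D$. You transcribed the typo and then asserted that it delivers the required inequality, when what is true is that the \emph{corrected} hypothesis does.

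With the corrected hypothesis the transport step is clean and you should make it explicit rather than gesture at it: take $k_0$ the jet order at which $\mathscr Y_{k_0}\cong\mathscr C$ is the $G$-principal bundle, let $\pi\colon J_k\to J_{k_0}$ be the truncation (assume $k\geq k_0$; the paper's $k\geq2$ handles this), set $V:=\pi(W)\cap\mathscr Y_{k_0}$ and $\mathcal V:=\pi(U)$. Since $D$ and $\pi(D)$ are both graphs of jets of $q$, $\pi|_D$ is an isomorphism onto the $\nabla$-horizontal leaf $\mathcal L$, so $\dim\mathcal V=\dim U$, while $\dim V\leq\dim W$. Then $\dim W<\dim U+\dim G$ propagates to $\dim V<\dim\mathcal V+\dim G$, Theorem~\ref{th:AxSh1} gives a proper $\nabla$-special subvariety containing the projection of $\mathcal V$, and \cite[Theorem 3.2]{Chiu} upgrades this to weakly special as you say. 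The sparseness of semi-simple $G$ and the harmless restriction to $Y^\circ$ are fine as you noted.
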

Notice that this statement is Corollary \hyperlink{corB1}{B$^*$} from the Introduction and follows from Corollary \ref{thirdAx} at once after one knows that any $\nabla$-non-generic subvariety is contained in a proper weakly special subvariety, a result of \cite{Chiu} in the setting of Corollary \hyperlink{corB1}{B$^*$}. For additional discussion of this point, see Section \ref{comparisonsection}.

\subsection{Ball quotients} \label{covball}
In this subsection we outline the setting of the recent manuscript \cite{BUball}. Let $G(\mathbb R):= {\rm PU}(n,1)$, the group of holomorphic automorphisms of the unit ball $\m B^n \subset \m C^n.$ Concretely, let ${\rm U}(n,1)$ denote the group of linear transformations of $\m C^{n+1}$ leaving invariant the form: $$z_1 \bar z_1 + z_2 \bar z_2 + \ldots + z_n \bar z_n - z_0 \bar z_0,$$ 
namely, 
$${\rm U}(n,1) = \left \{ g \in {\rm GL}_{n+1} ( \m C ) \, | \, g^T \left( \begin{matrix}
I_n & 0 \\
0 & -1
\end{matrix} \right) \bar g = \left( \begin{matrix}
I_n & 0 \\
0 & -1
\end{matrix} \right) \right\}.$$ 
For $g \in {\rm U}(n,1),$ define the map $\phi_g : \m B^n \rightarrow \m B^n$ as follows: if 
$$g= \left( \begin{matrix}
A & a_1\\
a_2 & a_0 
\end{matrix} 
\right)$$ where $A$ is a $n \times n $ matrix, $a_1$ is a column vector, and $a_2$ is a row vector, then for $z \in \m B^n,$ $$\phi_g (  z) = \frac{A z + a_1 }{a_2 z +a_0 }.$$ 
It is not hard to show that the map $\phi_g$ is the identity if and only if $g \in \{ e^{it} I_{n+1}\;:\; t \in \m R \} \cong {\mathbb S}^1.$ Then we define ${\rm PU}(n,1) = {\rm U}(n,1) / {\mathbb S}^1$ and note that ${\rm PU}(n,1)$ is the group of holomorphic automorphisms of $\m B^n.$ 

Using \cite[Proposition 7.14]{Helgason} it follows that the compact dual, $X=\mathbb{CP}_n$, of $\m B^n$ can be written as a quotient $G(\mathbb C)/B$ with $G(\mathbb C) = {\rm PGL}_{n+1}(\mathbb C)$ and $B$ the subgroup of classes in ${\rm PGL}_{n+1}(\mathbb C)$ of block triangular matrices of the form
$$
\left( \begin{matrix}
A & 0\\
a_2 & a_0 
\end{matrix} 
\right).
$$
If we let $\Gamma \subset G(\mathbb R)$ be a lattice and let $Y$ be the quotient $\Gamma \backslash \m B^n$, then the quotient map $\upsilon:\m B^n\rightarrow Y$ is a covering map. From \cite[Theorem 3.4.5]{BUball}, we have that $\upsilon$ is definable in $\m R_{an,exp}$ on some fundamental domain. Hence, once again, the uniformizer $\upsilon$ is a solution to a $(G(\mathbb C), G(\mathbb C)/B)$-structure on $Y$  as defined above. Note that by results of Mok \cite{MokB}, the quotient $Y$ has the structure of a quasi-projective algebraic variety. In this setting, Baldi and Ullmo \cite[Theorem 1.22]{BUball} established the Ax-Schanuel conjecture: 

\begin{thm} \label{BUAS} 
Let $W \subset \m B^n \times Y$ and $D$ be the graph of the quotient map. Let $U$ be an irreducible component of $ W \cap D$ such that $\codim U < \codim W + \codim D$ or equivalently $\dim W < \dim U + \dim Y.$ If the projection from $U$ to $Y$ is positive dimensional, then it is contained in a strict totally geodesic subvariety of $Y$. 
\end{thm}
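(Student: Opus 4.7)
The plan is to apply Corollary B (Corollary \ref{thirdAx}) to the uniformizer $\upsilon$ and then identify $\nabla$-special subvarieties with totally geodesic subvarieties of $Y$, in exact analogy with the deduction of Corollary B$^*$ from Corollary B via Chiu's result \cite{Chiu} in the Shimura setting.

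\textbf{Applying Corollary B.} As recalled in the preceding subsection, $\upsilon:\mathbb{B}^n \to Y$ is the uniformization of a $(G,G/B)$-structure on $Y$ with $G=\mathrm{PU}(n,1)$ and $G/B=\mathbb{CP}^n$. Since $G$ is semi-simple it is sparse, and since $\Gamma$ is a lattice (hence Zariski dense in $G$ by Borel's density theorem) the associated horizontal foliation is irreducible. An algebraic subvariety $W \subset \mathbb{B}^n \times Y$ is the restriction of an algebraic subvariety $\widetilde W \subset \mathbb{CP}^n \times Y$ to the semi-algebraic set $\mathbb{B}^n \times Y$; a component $U$ of $W \cap D$ is then contained in a component $\widetilde U$ of $\widetilde W \cap \widetilde D$ of the same dimension (where $\widetilde D$ is the graph of $\upsilon$ in $\mathbb{CP}^n \times Y$). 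Thus the hypothesis $\dim W < \dim U + \dim Y$ transfers to $\dim \widetilde W < \dim \widetilde U + \dim Y$, and Corollary B produces a $\nabla$-special subvariety $X \subsetneq Y$ containing the projection $\pi_Y(U)$.

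\textbf{From $\nabla$-special to totally geodesic, and the main obstacle.} It remains to show that any $\nabla$-special subvariety of $Y$ is contained in a strict totally geodesic subvariety. Let $X \subset Y$ be $\nabla$-special; then $H:=\mathrm{Gal}(\nabla|_X)$ is a proper algebraic subgroup of $G$, and by Proposition \ref{prp:connection} together with the $H$-principal structure on a minimal $\nabla|_X$-invariant subvariety $Z\subset P|_X$, a component $\widetilde X$ of $\upsilon^{-1}(X)$ arises as the projection to $\mathbb{B}^n$ of a horizontal leaf of $\nabla|_Z$. Its Zariski closure in $\mathbb{CP}^n$ is an algebraic subvariety whose $G$-stabilizer contains $H^\circ$, so $\widetilde X$ is a bi-algebraic subvariety of $\mathbb{B}^n$ in the sense of \cite{BUball}. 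The characterization of bi-algebraic subvarieties for ball quotients due to Mok \cite{MokB}, together with the rank-one analog of the Ullmo-Yafaev criterion used in \cite{BUball}, then forces $\widetilde X$ to be contained in a proper totally geodesic subvariety of $\mathbb{B}^n$, which descends to a strict totally geodesic subvariety of $Y$ containing $\pi_Y(U)$. Positive-dimensionality of $\pi_Y(U)$ ensures that the totally geodesic subvariety obtained is not a point, completing the proof. The main obstacle is precisely this last bi-algebraic step: Theorem A (via Corollary B) delivers the differential-algebraic Ax-Schanuel for free, but the identification of $\nabla$-special subvarieties with ``contained in a proper totally geodesic subvariety'' requires genuinely geometric input specific to the rank-one Hermitian symmetric setting, rather than the general $(G, G/B)$-formalism.
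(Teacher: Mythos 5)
The paper does not prove Theorem \ref{BUAS}: it is recalled from \cite[Theorem 1.22]{BUball} in a section that explicitly ``outline[s] some settings for existing results which overlap our applications,'' and is used only for comparison. Your strategy --- apply Corollary \ref{thirdAx} and then upgrade ``contained in a $\nabla$-special subvariety'' to ``contained in a totally geodesic subvariety'' --- is the natural route, and it parallels how the paper obtains Corollary B$^*$ from Corollary B via Chiu's theorem. The first step is sound: $G=\mathrm{PU}(n,1)$ is sparse, $\Gamma$ is Zariski dense so the Galois group is full and the foliation is irreducible, and Corollary \ref{thirdAx} places the projection of $U$ in a proper $\nabla$-special subvariety $X$.

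The bridge is where the gap lies, and it is not small. You assert that a component $\widetilde X$ of $\upsilon^{-1}(X)$ is bi-algebraic because the Zariski closure of a horizontal leaf over $X$ is stabilized by $H^\circ$. This does not follow: $H$ acts on the fibers of the principal bundle over $X$, not on $G/B$, and nothing in the $\nabla$-special condition forces the image of a chart in $G/B$ to lie in a proper algebraic subvariety. The paper's own Section \ref{comparisonsection} records that bi-algebraic implies $\nabla$-special but that ``the converse is, in general, not true'' --- $\nabla$-special varieties are closed under passage to subvarieties while bi-algebraic ones are not. Proving that maximal $\nabla$-special subvarieties are bi-algebraic is precisely the nontrivial geometric input one needs: in the Shimura and mixed period mapping setting this is Chiu's theorem (using an Andr\'e--Deligne argument), and in the non-arithmetic ball quotient setting it is the substance of Baldi--Ullmo's own proof, which is Hodge-theoretic. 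Your appeal to ``Mok's characterization'' together with a ``rank-one Ullmo--Yafaev analog'' is therefore circular: assembled, those ingredients are Theorem \ref{BUAS}, not consequences available from the paper's differential-algebraic framework. You correctly flag this as ``the main obstacle,'' but the sketch you offer does not supply the missing geometric input, and the paper itself does not claim to.
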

Theorem \ref{BUAS} generalizes the earlier non-arithmetic Ax-Lindemann-Weierstrass theorem of Mok \cite{MokALW}. In the setting of this subsection, by Corollary 5.6.2 of \cite{BUball}, the totally geodesic subvarieties are precisely the bi-algebraic subvarieties for the map $\upsilon.$ Keeping in mind this connection will be essential later for observing applications of our results which generalize the Ax-Schanuel result of \cite{BUball}. There are also similar results of \cite{bader} (stated in terms of geodesic subvarieties) for $\Gamma \leq SO(n,1)$, another setting to which our results would likely apply\footnote{One would simply have to establish that the uniformizers are o-minimally definable.}. 

\subsection{Comparison of special subvarieties} \label{comparisonsection}

Let $\mathscr Y$ be a $(G,G/B)$-structure on an algebraic variety $Y$ and $\upsilon$ a solution of $\mathscr Y$ in $\mathcal O({\rm dom}(\upsilon))$ holomorphic on its domain and $\nabla$ the associated connection. An algebraic subvariety $X \subset Y$ is said to be bi-algebraic if the irreducible components of $\upsilon^{-1}(X)$ are intersection $S \cap {\rm dom}(\upsilon)$ with $S$ algebraic in $G/B$. 

\begin{prop}
 Bi-algebraic subvarieties are $\nabla$-non-generic.
\end{prop}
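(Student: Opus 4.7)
The plan is to exhibit a proper algebraic $\nabla|_X$-invariant subvariety $Z \subset \mathscr C|_X$ that contains the Zariski closure of a horizontal leaf; then ${\rm Gal}(\nabla|_X)$, being the stabilizer of that closure, will be forced into the $G$-stabilizer of a proper subvariety of $G/B$, and hence will be a proper subgroup of $G$. Note that the argument will not need the hypothesis that $G$ is sparse, so I avoid applying Corollary~\ref{thirdAx} and instead construct the invariant subvariety directly from the bi-algebraicity data.

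Working component by component and restricting to smooth loci, I may assume $X$ is a smooth, irreducible, proper subvariety of $Y$ (for $X=Y$ the conclusion is vacuous since ${\rm Gal}(\nabla|_Y)=G$). By bi-algebraicity there is an algebraic $S \subset G/B$ such that $S \cap \dom(\upsilon)$ is an irreducible component of $\upsilon^{-1}(X)$; replacing $S$ by the Zariski closure of this component in $G/B$, and using that $\upsilon$ is locally biholomorphic, one has $\dim S = \dim X < \dim Y = \dim G/B$, so $S$ is a proper subvariety of $G/B$. Consider the evaluation map $\mathrm{ev}\colon \mathscr C \to G/B$ sending a jet at $y$ of a chart $c$ to $c(y)$; it is $G$-equivariant for post-composition on $\mathscr C$ and the standard action on $G/B$. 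Set $Z := \mathrm{ev}^{-1}(S) \cap \mathscr C|_X$, a proper algebraic subvariety of $\mathscr C|_X$.

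Let $\mathcal L \subset \mathscr C$ be the horizontal leaf containing the jet section $y \mapsto j^\infty(\upsilon^{-1})(y)$. Over each $y \in X$ this section evaluates to $\upsilon^{-1}(y) \in S$, so some connected component $\mathcal L_X$ of $\mathcal L \cap \mathscr C|_X$ is a horizontal leaf of the restricted connection $\nabla|_X$ and is contained in $Z$. Invoking the characterization of minimal invariant subvarieties from Section~\ref{cartan} (Zariski closures of horizontal leaves are precisely the minimal invariant subvarieties) applied to the restricted principal bundle $\mathscr C|_X \to X$, the closure $\overline{\mathcal L_X}$ is a minimal $\nabla|_X$-invariant subvariety of $\mathscr C|_X$, and it lies inside $Z$. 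Consequently $S' := \mathrm{ev}(\overline{\mathcal L_X})$ is an algebraic subvariety of $G/B$ with $S' \subseteq S \subsetneq G/B$.

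Finally, any $g \in {\rm Gal}(\nabla|_X) = \{g \in G : \overline{\mathcal L_X}\cdot g = \overline{\mathcal L_X}\}$ must preserve $S'$, by $G$-equivariance of $\mathrm{ev}$. Since the $G$-action on $G/B$ is transitive, $G/B$ admits no proper $G$-invariant subvarieties, so ${\rm Stab}_G(S')$ is a proper subgroup of $G$; hence ${\rm Gal}(\nabla|_X) \subseteq {\rm Stab}_G(S') \subsetneq G$, and $X$ is $\nabla$-special. The main technical point to nail down is that $\mathcal L_X$ really is a horizontal leaf of $\nabla|_X$ (not merely an analytic intersection) and that $\overline{\mathcal L_X}$ is a \emph{minimal} $\nabla|_X$-invariant subvariety of $\mathscr C|_X$; this amounts to checking that the horizontal distribution of $\nabla|_X$ on $\mathscr C|_X$ coincides with $\mathcal H_\nabla \cap T(\mathscr C|_X)$, which then lets the lemma on minimal invariant subvarieties in Section~\ref{cartan} apply verbatim to the restricted bundle.
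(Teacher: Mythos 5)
Your proof is correct but takes a genuinely different route from the paper's. The paper's argument is a direct application of Corollary~\ref{thirdAx}: with $W = S \times X \subset G/B \times Y$ and $U$ the graph of $\upsilon|_S$, the dimension count $\dim W = 2\dim X < \dim X + \dim Y = \dim U + \dim Y$ triggers the Ax--Schanuel corollary at once. That corollary, however, carries the hypothesis that $G$ be sparse; your argument works without it, because you produce a proper $\nabla|_X$-invariant subvariety of $\mathscr C|_X$ directly from $S$ via the $G$-equivariant evaluation map ${\rm ev}\colon \mathscr C \to G/B$, and then bound ${\rm Gal}(\nabla|_X)$ inside the stabilizer of a proper closed subvariety of $G/B$, which is a proper subgroup because $G$ acts transitively on $G/B$. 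Thus your route is more elementary (no sparse hypothesis, no Ax--Schanuel) and gives a slightly more general statement, while the paper buys brevity by recycling its central theorem. The two small points you flag at the end are real but benign: you should take the Zariski closure of ${\rm ev}(\overline{\mathcal L_X})$ rather than the set-theoretic image (which need not be closed, though its closure still lies inside the closed $S$), and you should pass to a Zariski-open $X^\circ \subset X^\ast$ over which $\nabla|_{X^\circ}$ is regular before invoking the lemma on minimal invariant subvarieties from Section~\ref{cartan}; neither changes the argument.
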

\begin{proof}
The algebraic subvariety $ S \times X \subset  G/B\times Y$ contains the graph of $\upsilon|_S$, called $U$ and $\dim S\times X = \dim S + \dim X < \dim U + \dim Y$. Corollary \ref{thirdAx} can be applied.
\end{proof}

The converse is, in general, not true. The class of $\nabla$-non-generic varieties is closed under taking subvarieties, while the class of bi-algebraic subvarieties is not. In the setting of Shimura varieties, the class of bi-algebraic varieties are precisely the class of \emph{weakly special subvarieties} (see \ref{weaklyspec} and \ref{weaklyspeccomp}). However, in the setting of mixed period mappings, a general setting including many of the recent results of Ax-Schanuel type, Chiu \cite{Chiu} uses a result of Andr\'e-Deligne to show that every $\nabla$-non-generic subvariety is contained in a proper weakly special subvariety. It follows that in this setting every proper $\nabla$-special subvariety is bi-algebraic. 
 
\section{The product case and applications to Model theory}\label{sectionproduct}
In this section we use Theorem \ref{th:AxSh1} to study products of $(G, G/B)$-structures. In particular we show that an Ax-Schanuel type theorem holds in this setting. We then apply this result to give a model theoretic study of set defined, in a differentially closed field, by the uniformizers of $(G,G/B)$-structures. We show that the definable sets are strongly minimal, geometrically trivial and in the case of covering maps, satisfy a weak form of the Ax-Lindemann-Weierstrass Theorem with derivatives. Before that, we need a technical result on projection of $\nabla$-non-generic varieties.

\begin{lem}\label{lm:projection}
Let $\nabla_1$ and $\nabla_2$ be principal connections on bundles $P_1\to Y_1$ and $P_2\to Y_2$ with connected structure groups $G_1$ and $G_2$ with ${\rm Gal}(\nabla_1) = G_1$ and ${\rm Gal}(\nabla_2) = G_2$. Let $X\subset Y_1\times Y_2$ be a $(\nabla_1\times\nabla_2)$-non-generic subvariety and let us assume ${\rm Gal}((\nabla_1\times\nabla_2)|_X) \subseteq H\times G_2$ for some proper algebraic subgroup $H\subset G_1$. Then there is a $\nabla_1$-non-generic subvariety $X_1\subset Y_1$ such that $X\subseteq X_1\times Y_2$ and $ {\rm Lie}({\rm Gal}(\nabla_1|_{X_1})\subseteq {\rm Lie}(H)$.  
\end{lem}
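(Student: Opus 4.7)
The natural candidate is $X_1 := \overline{\pi_{Y_1}(X)}$, the Zariski closure in $Y_1$ of the first projection of $X$. With this choice the inclusion $X\subseteq X_1\times Y_2$ is automatic (we read the ``$X_1\times Y_1$'' in the statement as a typo), so the task reduces to showing that $X_1\subsetneq Y_1$ and ${\rm Gal}(\nabla_1|_{X_1})\subseteq H$.

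The plan is to descend the Galois structure from $(\nabla_1\times\nabla_2)|_X$ to $\nabla_1|_{X_1}$ along the first projection $\pi_1:P_1\times P_2\to P_1$. Pick a horizontal leaf $\mathcal{L}$ of $(\nabla_1\times\nabla_2)|_X$; its Zariski closure $\overline{\mathcal{L}}$ is a principal $K$-bundle over $X$ with $K={\rm Gal}((\nabla_1\times\nabla_2)|_X)\subseteq H\times G_2$. Since the horizontal distribution of $\nabla_1\times\nabla_2$ is the direct sum of those of $\nabla_1$ and $\nabla_2$, the projection $\pi_1(\mathcal{L})$ is an integral submanifold of the horizontal distribution of $\nabla_1|_{X_1}$ and therefore lies in a horizontal leaf $\mathcal{L}_1$ of $\nabla_1|_{X_1}$. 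Zariski density of $\mathcal{L}$ in $\overline{\mathcal{L}}$ yields $\overline{\pi_1(\overline{\mathcal{L}})}=\overline{\pi_1(\mathcal{L})}\subseteq\overline{\mathcal{L}_1}$. Moreover $\overline{\pi_1(\overline{\mathcal{L}})}$ is irreducible and $\nabla_1|_{X_1}$-invariant (any horizontal lift of a vector field on $X_1$ can be realised as the $\pi_1$-projection of a horizontal lift of a preimage in $TX$), so by minimality of $\overline{\mathcal{L}_1}$ we obtain $\overline{\pi_1(\overline{\mathcal{L}})}=\overline{\mathcal{L}_1}$.

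The heart of the argument is a fiber computation at a generic $y_1\in X_1$. After replacing $X$ by an irreducible component we may assume $X|_{y_1}$ is irreducible. For each $y_2$ with $(y_1,y_2)\in X$, the fiber $\overline{\mathcal{L}}|_{(y_1,y_2)}$ is a coset of $K$, so its image $\pi_1(\overline{\mathcal{L}}|_{(y_1,y_2)})$ is a coset of $\pi_{G_1}(K)\subseteq H$. If $(y_1,y_2),(y_1,y_2')$ are joined by a path in $X|_{y_1}$, then the horizontal lift of that path in $(P_1\times P_2)|_X$ keeps the $P_1$-coordinate constant (its $Y_1$-projection is constant, and the $\nabla_1$-horizontal lift of the zero vector field is zero), so the two cosets $\pi_1(\overline{\mathcal{L}}|_{(y_1,y_2)})$ and $\pi_1(\overline{\mathcal{L}}|_{(y_1,y_2')})$ coincide. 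Hence $\pi_1(\overline{\mathcal{L}})|_{y_1}$ is a single coset of $\pi_{G_1}(K)$, and taking Zariski closures $\overline{\mathcal{L}_1}|_{y_1}$ is a single coset of $\pi_{G_1}(K)$ too. Comparing with the principal $ {\rm Gal}(\nabla_1|_{X_1})$-bundle structure of $\overline{\mathcal{L}_1}\to X_1$ forces ${\rm Gal}(\nabla_1|_{X_1})=\pi_{G_1}(K)\subseteq H\subsetneq G_1$, whence $X_1$ is $\nabla_1$-special.

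The main obstacle I anticipate is precisely this coset-collapsing step: checking that the cosets of $\pi_{G_1}(K)$ attached to different $y_2$ in a fiber of $X\to X_1$ truly coincide, which rests on the product structure of the horizontal distribution and the triviality of horizontal lifts for constant $Y_1$-projections. A minor subtlety is that Galois groups are only well-defined up to conjugation in $G_1\times G_2$; this is harmless because conjugates of $H\times G_2$ in $G_1\times G_2$ have the form $gHg^{-1}\times G_2$, so the conclusion ${\rm Gal}(\nabla_1|_{X_1})\subseteq H$ is stable under a consistent choice of base points in $\overline{\mathcal{L}}$ and $\overline{\mathcal{L}_1}$.
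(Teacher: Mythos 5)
Your proposal takes a genuinely different route from the paper. The paper argues by contradiction: it assumes no such $X_1$ exists, picks $X'\subset X$ finite-to-one and dominant onto $Y_1$, observes that ${\rm p}^*\nabla_1$ on $X'$ is the quotient of $(\nabla_1\times\nabla_2)|_{X'}$ by the $G_2$-action (so ${\rm Gal}({\rm p}^*\nabla_1)\subseteq H$), and then uses that a finite-to-one dominant pullback preserves the Lie algebra of the Galois group to contradict ${\rm Gal}(\nabla_1)=G_1$. The quotient-by-$G_2$ step is the engine: it lets one compare Galois groups directly at the level of a generically finite cover of $Y_1$ (or of $X_1$), sidestepping all questions about the geometry of the fibers of $X\to X_1$.

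Your direct approach (project the leaf closure and collapse the fiber cosets) is attractive and closer in spirit to the underlying geometry, but it has a concrete gap at exactly the step you flagged. The coset-collapsing argument needs the fiber $X|_{y_1}$ to be connected, and the remedy you offer --- ``after replacing $X$ by an irreducible component we may assume $X|_{y_1}$ is irreducible'' --- does not work: an irreducible variety can perfectly well have disconnected (even zero-dimensional) generic fibers over a dominant projection, e.g.\ $\{y_2^2=y_1\}\to\mathbb{A}^1$. When $X|_{y_1}$ is disconnected, your path argument only identifies the cosets $\pi_1\bigl(\overline{\mathcal{L}}|_{(y_1,y_2)}\bigr)$ within each connected component, so $\pi_1(\overline{\mathcal{L}})|_{y_1}$ may be a \emph{finite union} of distinct cosets of $\pi_{G_1}(K)$. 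That only yields that $\pi_{G_1}(K)$ has finite index in ${\rm Gal}(\nabla_1|_{X_1})$, i.e.\ a Lie-algebra-level containment $\mathrm{Lie}({\rm Gal}(\nabla_1|_{X_1}))\subseteq\mathrm{Lie}(H)$, not the group containment ${\rm Gal}(\nabla_1|_{X_1})\subseteq H$ claimed by the lemma. The paper's finite-to-one reduction reaches the same Lie-algebra conclusion but packages the comparison through the quotient bundle $p^*P_1 = \bigl((P_1\times P_2)|_{X'}\bigr)/G_2$, which avoids needing any connectedness of the fibers of $X\to X_1$. If you keep your approach, you should either restrict to the Lie-algebra statement (which suffices for the contradiction when $X_1=Y_1$ and for the downstream applications in the paper, where $G$ is sparse or simple), or replace the path argument by the finite-to-one $X'\subset X$ device from the paper, which makes the question of fiber connectedness moot.
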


 \begin{proof}
Let $X_1$ be the Zariski closure of the projection of $X$ in $Y_1$. Let us take any subvariety $X'\subset X$ such that the projection ${\rm p}\colon X'\to X_1$ is a finite-to-one dominant map. By hypothesis 
 $${\rm Gal}((\nabla_1\times\nabla_2)|_{X'}) \subseteq  {\rm Gal}(\nabla_1\times\nabla_2|_{X})\subseteq H\times G_2.$$
 On the other hand, as a connection on $X'$, ${\rm p}^*\nabla_1$ is the quotient of ($\nabla_1\times\nabla_2)|_{X'}$ by the action of $G_2$. It follows that ${\rm Gal}({\rm p}^*\nabla_1)\subseteq H$. As ${\rm p}$ is a finite-to-one map we have ${\rm Gal}({\rm p}^*\nabla_1)$ and ${\rm Gal}(\nabla_1|_{X_1})$ have the same Lie algebra included in ${\rm Lie}(H)$. Hence $X_1$ is $\nabla$-non-generic.
 \end{proof}

\subsection{Product of $(G, G/B)$-structures.}
In this subsection we apply some of the results of Section 3 to products of $(G, G/B)$-structures. We first observe that the following Lemma holds.
\begin{lem}\label{lm:total_galois}
Let $G$ be an algebraic group, $B$ a subgroup and $\mathscr Y \subset J^\ast(G/B, Y)$ a rational $(G, G/B)$-structure on $Y$. If the Galois group of the associated charts set $\mathscr C$ is $G$ then 
\begin{enumerate}
    \item $\mathscr Y$ is irreducible,
    \item there is no proper $\mathcal D_{G/B}$-subvariety of $\mathscr Y$.
\end{enumerate}
\end{lem}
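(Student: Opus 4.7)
The plan is to exploit the jet-inversion isomorphism $\iota\colon J^\ast(Y,G/B)\xrightarrow{\sim} J^\ast(G/B,Y)$, which by definition carries $\mathscr{C}$ onto $\mathscr{Y}$, and to show that it intertwines the $\nabla$-horizontal structure on $\mathscr{C}$ with the $\mathcal{D}_{G/B}$-structure on $\mathscr{Y}$. If $c\colon U\to G/B$ is a local solution of $\mathscr{C}$, then the image of $j^\infty(c)$ is a horizontal leaf of $\nabla$, and its image under $\iota$ is the image of $j^\infty(c^{-1})\colon c(U)\to\mathscr{Y}$, which is by construction an integral leaf of the $\mathcal{D}_{G/B}$-structure on $\mathscr{Y}$. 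First I would verify that this matching of leaves induces a bijection between $\nabla$-invariant algebraic subvarieties of $\mathscr{C}$ and $\mathcal{D}_{G/B}$-subvarieties of $\mathscr{Y}$, since a subvariety is invariant under either structure precisely when it is a union of Zariski closures of the corresponding leaves.

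Next I would use the hypothesis ${\rm Gal}(\nabla) = G$. By the analysis in Section~\ref{cartan}, any minimal $\nabla$-invariant subvariety $Z\subset \mathscr{C}$ is a ${\rm Gal}(Z)$-principal subbundle of $\mathscr{C}$, and hence has dimension $\dim Y + \dim{\rm Gal}(Z)$. Under the hypothesis, $\dim Z = \dim\mathscr{C}$, so $Z$ is an irreducible component of $\mathscr{C}$; since $\mathscr{C}$ is a principal bundle over $Y^\circ$ with connected structure group $G$, it is itself irreducible, and so $Z=\mathscr{C}$. Because every $\nabla$-invariant subvariety of $\mathscr{C}$ contains the minimal one through each of its points, there is no proper non-vertical $\nabla$-invariant subvariety of $\mathscr{C}$. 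Transferring this across $\iota$ yields (2): $\mathscr{Y}$ has no proper $\mathcal{D}_{G/B}$-subvariety.

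Part (1) then follows from (2): decomposing $\mathscr{Y}=V_1\cup\cdots\cup V_k$ into irreducible components, each $V_i$ is preserved by the rational vector fields that generate the $\mathcal{D}_{G/B}$-structure (since derivations preserve irreducible components of the variety on which they act), hence each $V_i$ is a $\mathcal{D}_{G/B}$-subvariety; by (2), $V_i=\mathscr{Y}$ for each $i$, forcing $k=1$. The main technical obstacle I anticipate is verifying the correspondence set up in the first paragraph as an algebraic (not merely analytic) bijection: one must check that $\iota$ actually intertwines the two rational differential structures, and that the Zariski closure of a horizontal leaf on the $\mathscr{C}$ side matches the Zariski closure of its image integral leaf on the $\mathscr{Y}$ side, so that the correspondence descends cleanly to invariant algebraic subvarieties without loss at the singular loci of the two $\mathcal{D}$-structures.
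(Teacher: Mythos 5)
Your argument is correct in substance and tracks the paper's strategy of transferring the question across the jet-inversion isomorphism, but the two proofs diverge on a couple of points worth noting. For irreducibility, the paper observes that the hypothesis ${\rm Gal}(\nabla)=G$ makes $\mathscr C$ equal to the Zariski closure of a single horizontal leaf, and then invokes Bonnet's theorem that such closures are irreducible; you instead argue that $\mathscr C$ is irreducible because it is a $G$-principal bundle over the irreducible $Y^\circ$ with $G$ connected, and then recover (1) from (2) via the standard fact that irreducible components of a $\mathcal D$-variety are $\mathcal D$-subvarieties. Your route works, but it silently uses connectedness of $G$, which the lemma does not state (the paper's Bonnet-based argument avoids this: the minimal invariant $Z$ is irreducible by Bonnet and is a $G$-torsor over $Y^\circ$, so $Z=\mathscr C$ without any connectedness assumption). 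Also, once you have established irreducibility of $\mathscr C$, the isomorphism gives (1) immediately; the detour through (2) to re-derive (1) is valid but unnecessary. Finally, your closing worry about matching Zariski closures of leaves across $\iota$ at singular loci is dispatched in the paper by a single sentence — $\mathcal D$-varieties are $\mathcal O$-torsion-free, so the rational isomorphism over a dense open determines the whole $\mathcal D$-subvariety — which is the clean justification you were looking for.
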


\begin{proof} 
Using the isomorphism  between $J^\ast(G/B, Y)$ and  $J^\ast(Y, G/B)$, we need to prove that $\mathscr C$ is irreducible. The hypothesis on the Galois group means that $\mathscr C$ is the Zariski closure of an horizontal leaf. Irreducibility of the Zariski closure of a leaf of an holomorphic foliation is  proved in \cite{bonnet}.

The isomorphism  between $J^\ast(G/B, Y)$ and  $J^\ast(Y, G/B)$ exchanges the two differential structures thus a  proper $\mathcal D_{G/B}$-subvariety of $\mathscr Y$ gives a  proper $\mathcal D_{Y}$-subvariety of $\mathscr C$. By our Galois assumption there is no $\mathcal D_{Y}$-subvariety of $\mathscr C$. 
\end{proof}

For $i=1 ,\ldots, n$ let $G_i$ be a simple algebraic group, $B_i \subset G_i$ a subgroup and $\mathscr C_i \subset J^\ast(Y_i, G_i/B_i)$ a rational $( G_i, G_i/B_i)$-structure on $Y_i$ whose $G_i$-invariant connection is denoted by $\nabla_i$. 
 As 
 $$\prod_{i=1}^n J^*(Y_i,G_i/B_i) \subset J^*\left(\prod_{i=1}^n Y_i, \prod_{i=1}^n G_i/B_i \right) = \tilde J$$ 
 the subset $\mathscr C = \prod_{i} \mathscr C_i $ is a $(\prod_{i} G_i , \prod_i G_i/B_i)$-structure on $Y = \prod_i Y_i$ whose connection will be $\nabla$. A uniformization of this structure is a map $\bar\upsilon = (\upsilon_1 , \ldots , \upsilon_n) \in \mathscr Y$, where the $i^{th}$ factor is a uniformization of $Y_i$ and depends only of variables $G_i/B_i$.  \\

For each group $G_i$ let us consider $Z_i$ its center, $\bar G_i = G_i/Z_i$ and $\bar B_i = B_i/(B_i\cap Z_i)$. There is a natural quotient map $G_i/B_i \to \bar G_i/\bar B_i$ that is a Galois covering of fiber $Z_i/(B_i\cap Z_i)$. The composition with this covering map induces also a covering map $J^*(Y_i,G_i/B_i) \to J^*(Y_i,\bar G_i/\bar B_i)$ that maps the rational $(G_i,B_i)$-structure $\mathscr C_i$ onto a rational $(\bar G_i,\bar B_i)$ structure $\bar{\mathscr C}_i$ on $Y_i$. As a principal bundle, this structure $\bar{\mathscr C}_i$ is the quotient $\mathscr C_i/Z_i$. The invariant connection on $\bar{\mathscr C}_i$ is denoted $\bar \nabla_i$. In the following theorem, by a correspondence we mean an algebraic subvariety whose projections on both factors are finite and dominant.

\begin{thm}\label{Binary}
 Let $V$ be an algebraic subvariety of $\tilde{J}$ and let $\mathcal L \subset \tilde{J}$ be the graph of a uniformizer in $\mathscr Y$. Assume 
 \begin{enumerate}
     \item The Galois group of the $i^{th}$ factor $\mathscr C_i$ is $G_i$,
     \item $V$ is the Zariski closure of an irreducible component $\mathcal V$  of $V \cap \mathcal L$,
     \item $\dim V < \dim(\mathcal V) + \dim G_1 +\ldots+ \dim G_n$ ,
     \item the projection of {$\mathcal V$} on each $G_i/B_i$ is dominant\footnote{ By dominant map between analytic sets we means that the image contains a Euclidean open subset}.
 \end{enumerate}   
Then
\begin{enumerate}
\item[(a)]there exist two indices $i<j$ such that 
the projection $X_{ij}$ of $V$ in $Y_i \times Y_j$ is a $(\nabla_i\times\nabla_j)$-non-generic subvariety whose projections on both factors is onto.
\item[(b)] 
Denote by $\pi_i$ and $\pi_j$ the projections of $X_{ij}$ on $Y_i$ and $Y_j$. There is an isomorphism 
$$\varphi \colon \pi_i^*\bar{\mathscr C}_i\to \pi_j^*\bar{\mathscr C}_j$$  
defined over $X_{ij}$ such that $\varphi_*\pi_i^*\bar\nabla_i = \pi_j^*\bar\nabla_j$.
 \item[(c)] Assume the factors have finitely many $Z_i$-special subvarieties of positive dimension, then $X_{ij}$ is a correspondence. 

 \end{enumerate}

\end{thm}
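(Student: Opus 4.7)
The plan is to apply Theorem \ref{th:AxSh1} to the product connection $\nabla$ and then exploit the simplicity of the factors $G_i$ by a Goursat-type analysis. Hypotheses (2) and (3) are exactly those of Theorem \ref{th:AxSh1}, so the projection of $\mathcal V$ to $Y = \prod_i Y_i$ is contained in a $\nabla$-special subvariety $X$. Hypothesis (4), combined with the local surjectivity of each uniformizer $\upsilon_i$, forces $X$ to project dominantly onto every $Y_i$; applying the contrapositive of Lemma \ref{lm:projection} iteratively across the factors then shows that $H := {\rm Gal}(\nabla|_X)$ surjects onto each $G_i$.

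Since $H$ is a proper subgroup of $\prod_i G_i$ that surjects onto each simple factor, a Goursat-style argument for products of simple groups yields two indices $i < j$ for which the image of $H$ in $\bar G_i \times \bar G_j$ is, modulo centers, the graph of an isomorphism $\phi\colon \bar G_i \xrightarrow{\sim} \bar G_j$. Let $X_{ij}$ denote the projection of $X$ to $Y_i \times Y_j$: its Galois group is the image of $H$ in $G_i \times G_j$, which is a proper subgroup surjecting onto both factors. This shows that $X_{ij}$ is a proper $(\nabla_i \times \nabla_j)$-special subvariety with onto projections, establishing (a).

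For (b), the graph-of-$\phi$ structure of the reduced Galois group $\bar H_{ij}$ is precisely the datum encoding a bundle identification: pulling $\bar{\mathscr C}_i$ and $\bar{\mathscr C}_j$ back to $X_{ij}$, the reduction of the structure group of $\pi_i^* \bar{\mathscr C}_i \times_{X_{ij}} \pi_j^* \bar{\mathscr C}_j$ to the graph of $\phi$ provides an isomorphism of principal $\bar G$-bundles $\varphi\colon \pi_i^* \bar{\mathscr C}_i \to \pi_j^* \bar{\mathscr C}_j$. Because this reduction is horizontal for the product connection, $\varphi$ intertwines $\pi_i^* \bar\nabla_i$ and $\pi_j^* \bar\nabla_j$. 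For (c), suppose toward a contradiction that, say, $X_{ij} \to Y_i$ had generic fibers of positive dimension. Those fibers, transported to $Y_j$ via $\varphi$, would be positive-dimensional subvarieties of $Y_j$ on which $\nabla_j$ reduces to a subconnection with Galois group lying in the center $Z_j$, producing an infinite family of positive-dimensional $Z$-special subvarieties and contradicting the hypothesis of (c). Generic fibers of both projections must therefore be finite, so $X_{ij}$ is a correspondence.

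The main technical obstacle I foresee is the Goursat step: ensuring that the graph-of-isomorphism structure descends cleanly from $H \subset \prod_k G_k$ onto the two-factor projection $G_i \times G_j$ while correctly tracking the centers $Z_i$, and then globalizing the group-theoretic isomorphism $\phi$ to a connection-preserving bundle isomorphism over $X_{ij}$ rather than merely over a point. A related delicate point is verifying in (c) that the fibers of $X_{ij} \to Y_i$ really are forced into the $Z$-special class predicted by $\varphi$ rather than some larger class that the hypothesis fails to control.
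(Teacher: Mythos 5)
Your overall strategy matches the paper's: apply Theorem \ref{th:AxSh1} to the product connection, deduce that the projection lands in a special subvariety $X$ of $\prod_i Y_i$, show that ${\rm Gal}(\nabla|_X)$ surjects onto each factor $G_i$, and invoke the Goursat--Kolchin lemma to locate a pair $(i,j)$ linked by an isomorphism modulo centers. For the surjection onto each $G_i$ you invoke the contrapositive of Lemma \ref{lm:projection}, whereas the paper uses Lemma \ref{lm:total_galois}(2) (dominance of the projection $\mathscr C|_X \to \mathscr C_i$ because $\mathscr C_i$ has no proper $\mathcal D$-subvarieties). Both routes are valid, since the Galois group of a product connection is the product of the Galois groups, so Lemma \ref{lm:projection}'s hypothesis on the second factor is satisfied when you group the factors as $Y_i$ against $\prod_{k\neq i} Y_k$.

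Your statement of (b) is the part that is left vague precisely where the paper does real work. Goursat--Kolchin only gives that ${\rm Gal}(\nabla|_X)$ is \emph{contained} in $G_\sigma \times \prod_{k\neq i,j} G_k$, where $G_\sigma$ is the preimage of the graph of $\sigma: \bar G_i \to \bar G_j$. To produce the bundle isomorphism $\varphi$ the paper does two more things you elide: it applies Lemma \ref{lm:projection} once more to project down to $Y_i\times Y_j$ and conclude that $X_{ij}$ is indeed a \emph{proper} special subvariety there, and it uses the minimality statement in Goursat--Kolchin (a subgroup of $\bar G_i \times \bar G_j$ surjecting onto both factors and contained in the graph $\bar G_\sigma$ equals $\bar G_\sigma$) to show the Galois group of $\bar\nabla_{ij}|_{X_{ij}}$ is \emph{exactly} $\bar G_\sigma$. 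The isomorphism $\varphi$ is then realized concretely as $T_{ij}$, the Zariski closure of a horizontal leaf of $\bar\nabla_{ij}|_{X_{ij}}$, which is a principal $\bar G_\sigma$-subbundle of $\bar{\mathscr C}_i \times_{X_{ij}} \bar{\mathscr C}_j$ and hence the graph of a connection-preserving bundle isomorphism. This is exactly the ``globalizing'' step you flagged as the main obstacle, and it is resolved by taking $T_{ij}$.

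In (c) there is a genuine, if small, gap. You argue that positive-dimensional fibers of $X_{ij}\to Y_i$ would produce ``an infinite family of positive-dimensional $Z$-special subvarieties, contradicting the hypothesis.'' But the hypothesis only says there are finitely many \emph{maximal} such subvarieties; an infinite family all contained in a single maximal one is perfectly consistent with it. To close the gap you need to observe that, because the second projection $X_{ij}\to Y_j$ is dominant, the fibers over a dense open subset of $Y_i$ sweep out a dense subset of $Y_j$; a dense subset of the irreducible $Y_j$ cannot lie in a finite union of proper closed subvarieties, so the fibers cannot all be absorbed by the finitely many maximal $Z_j$-special subvarieties. The paper sidesteps this by first deleting the finitely many maximal positive-dimensional $Z_i$-special loci to pass to $Y_i^\circ$, after which a positive-dimensional $Z_j$-special fiber is immediately impossible, and then noting that the slice $X_{ij}\cap(\{y\}\times Y_j)$ is $G_\sigma\cap(\{e\}\times G_j)$-special, that this intersection is $\{e\}\times Z_j$ because $G_\sigma$ is a graph modulo centers, and concluding by dimension count since both projections are onto. (Your phrase ``transported to $Y_j$ via $\varphi$'' is also a slight confusion: the fiber already lives in $\{y\}\times Y_j \cong Y_j$; $\varphi$ identifies bundles over $X_{ij}$, not the bases.) Your self-criticism about whether the fibers really fall into the predicted $Z$-special class is addressed exactly by this computation of $G_\sigma\cap(\{e\}\times G_j)$, so you were worrying about the right spot but identified the wrong difficulty: the computation is clean, and the real issue is the maximality quantifier in the hypothesis.
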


\begin{proof}

(a) Let $X$ be the Zariski-closure of the projection of $V$ in $Y_1 \times \ldots \times Y_n$. The group $\prod_i G_i$ is semi-simple thus it is sparse.
By assumption (3) and Theorem \ref{th:AxSh1}, $X$ is a $\nabla$-non-generic subvariety of $Y_1\times\ldots \times Y_n$. The projection $\mathscr C \to \mathscr C_i$ is compatible with the connections. As the projection of $\mathcal V$ on $G_i/B_i$ is dominant, so is the projection of $\mathcal V$ into the graph of a chart in $G_i/B_i \times Y_i$. This implies that the projection $\mathscr C|_X \to \mathscr C_i$ has a $\nabla_i$-invariant image. By lemma  \ref{lm:total_galois}, this projection is dominant. 
If follows that ${\rm Gal}(\nabla|_X)$ is a subgroup of $G_1\times\ldots\times G_n$ that projects onto each component $G_i$. Then, by Goursat's lemma there are indices $i < j$ and an algebraic group isomorphism,
$\sigma\colon \bar G_i \to \bar G_j$
such that
$${\rm Gal}(\nabla|_X) \subset \left\{(g_1,\ldots,g_n)\in \prod_i G_i \;\colon\; \sigma( g_i Z_i) = g_j Z_j\right\}$$
Now, let us consider $\nabla_{ij} = \nabla_i \times \nabla_j$ as a connection on 
$$\mathscr C_i \times \mathscr C_j \to Y_i\times Y_j.$$
In the case $n = 2$ we have that $X = X_{ij}$ is a $\nabla$-non-generic subvariety. If $n>2$ we decompose,
$$\prod_{k=1}^n \mathscr C_k = (\mathscr C_i\times \mathscr C_j)\times \prod_{k\neq i,j} \mathscr C_k,$$ 
then ${\rm Gal}(\nabla|_{X}) \subset G_{\sigma}\times \prod_{k\neq i,j} G_k$ with
$G_\sigma = \{(g_i,g_j)\in G_i\times G_j\colon \sigma( g_i Z_i) \subset  g_j Z_j\}$. 
By Lemma \ref{lm:projection}, we have that the image $X_{i,j}$ of $X$ by the projection on $Y_i\times Y_j$ is a proper $(\nabla_i\times\nabla_j)$-non-generic subvariety.

(b) Let us now consider $\bar \nabla_{ij} = \bar \nabla_i \times \bar \nabla_j$ which is the quotient of $\nabla|_{ij}$ by the action of the finite group $Z_i\times Z_j$. The Galois group of $\bar\nabla_{ij}|_{X_{ij}}$ is included in the image of
$G_\sigma$ by the map
$$G_{\sigma} \to \bar G_i \times \bar G_j, \quad (g_1,\ldots,g_n) \mapsto (g_iZ_i,g_jZ_j).$$
By the above arguments we already know that the Galois group of $\bar\nabla_{ij}|_{X_{ij}}$ is included in
$$\bar G_\sigma = \{ (g_iZ_i,g_jZ_i)\in \bar G_i\times \bar G_j\;\colon\; \sigma(g_iZ_i) = g_jZ_j \}.$$
Moreover, by Goursat's lemma again, this group is also minimal among subgroups of $\bar G_i\times \bar G_j$ that projects onto the two factors. Therefore, the Galois group of  $\bar\nabla_{ij}|_{X_{ij}}$ is exactly $\bar G_{\sigma}$.
Let $T_{ij}$ be the Zariski closure of an horizontal leaf of $\bar \nabla_{ij}|_{X_{ij}}$ with Galois group $\bar G_{\sigma}$ then $T_{ij}\subset \bar{\mathscr C}_i\times \bar{\mathscr C}_j$ is the graph of an isomorphism of principal bundle with connections.

(c) We may consider $Y_i^\circ \subset Y_i$ the complement of the finitely many $Z_i$-special subvarieties of positive dimension. One can restrict bundles and connections above the products of $Y_i^\circ$. Thus we may assume that factors have no $Z_i$-special subvarieties of positive dimension.

As $X_{ij} \subset Y_i \times Y_j$ is a $G_\sigma$-non-generic subvariety and for $y \in Y_i$, $\{y\}\times Y_j$ is a $(\{e\} \times G_j)$-non-generic subvariety, then the intersection  $X_j = X_{ij} \cap \{y\}\times Y_j$ is empty, has dimension $0$ or is a positive dimensional non generic subvariety with group $G_\sigma \cap (\{e\} \times G_j)$. As $G_\sigma$ is the graph of an isomorphism up to centers the latter is $\{e\} \times Z_j$. As $Y_j$ has no $Z_j$-special subvarieties of positive dimension, $X_j$ is empty or has dimension $0$.

The projections of $X_{ij}$ are onto, thus $X_j$ is generically (on $y$) not empty.  Hence $\dim X_{ij} = \dim Y_i$ and $\dim X_{ij} =\dim Y_j$ and the two projections are dominant. This proves the assertion (c), that is $X_{ij}$ is a correspondence.
\end{proof}

In the following definition, we extract some of the key properties of geometric structures needed to apply the above theorem and to formulate a number of applications

\begin{defn}\label{simple}
A $(G, G/B)$ structure  $\mathscr Y$ (or $\mathscr C$) on an algebraic variety $Y$ is said to be {\em simple} if
\begin{enumerate}
    \item $G$ is a simple group with finite center $Z$,
    \item The Galois group of $\mathscr C$ is $G$,
    \item $Y$ has finitely many irreductible $Z$-special subvarieties of positive dimension.
\end{enumerate}
\end{defn}

\begin{rem}
If $\dim Y = 1$ the condition (3) trivially holds since any curve with a geometric structure has no special subvarieties of positive dimension.
\end{rem}

\begin{exam} 
 Consider again Example \ref{Appell} in the case when the solutions of $\mathscr Y_{hyp}$ are built from the quotient of the ball $\mathbb B \subset \mathbb{CP}_2$ by a lattice $\Gamma \subset {\rm PSU}(2,1) \subset {\rm PSL_3(\mathbb C)}$. As this lattice is included in ${\rm Gal}(\mathscr C_{hyp})$, the Galois group of this geometric structure is ${\rm PSL}_3(\mathbb C)$ and it satisfies (1) and (2) in definition \ref{simple}.
 
 To see that the third condition is satisfied, let us consider $X$ a $Z$-special curve in $Y$ and denote $\upsilon : \mathbb B \to Y$ the quotient map. The condition ${\rm Gal(\mathscr C_{hyp}|_X)} = Z$ implies that the restriction of an inverse branch $\upsilon^{-1}|_X$ is an algebraic map on $X$ with values in $\mathbb B$. By Liouville's theorem $\upsilon^{-1}|_X$ must be constant which is in contradiction with the definition of $\mathscr C_{hyp}$. 
\end{exam}

The above example is simply a special case of a more general phenomenon: 
\begin{exam}Let $\upsilon : \Omega \to Y=\Gamma \backslash \Omega$ be the quotient of a bounded symmetric domain $\Omega$ with automorphisms group $G(\mathbb R)$, $G$ simple and $\Gamma \subset G$ a lattice.  These contain the case of the uniformization of a Shimura variety with simple group $G$. 

As we have seen in Section \ref{coveringMap}, the covering map $\upsilon$ is a uniformization of a $(G,G/B)$-structure and we denote the associated connection by $\nabla$. By construction we have that $\Gamma \subset {\rm Gal}(\nabla)$ and hence it follows that ${\rm Gal}(\nabla) = G$. 

It is not hard to see that the third condition is also satisfied. If we assume that $X \subset Y$ is $Z$-special, then the restriction of the chart $\upsilon^{-1}$ to $X$ is an algebraic function on $X$ with values in $\Omega$. But there are no bounded algebraic functions on $X$ unless if $\dim X =0$. So Theorem \ref{thmC} from the Introduction is now obtained as a corollary to Theorem \ref{Binary}
\end{exam}

\begin{cor}[\bf Theorem C] \label{BinaryTransc}
Let $(Y, \mathscr Y)$ be a simple $(G, G/B)$-structure on $Y$, $\hat p_1 \ldots, \hat p_n$ be $n$ formal parametrizations of (formal) neighborhoods of points $p_1, \ldots, p_n$ in $G/B$ and $\upsilon_1, \ldots , \upsilon_n$ be solutions of $\mathscr Y$ defined in a neighborhood of $p_1, \ldots, p_n$ respectively.
If $$
{\rm tr.deg.}_{\mathbb C} \mathbb C\left( \hat p_i , (\partial^\alpha \upsilon_i)(\hat p_i) : 1 \leq i \leq n,\; \alpha \in \mathbb N^{\dim Y} \right) < \dim Y + n\dim G$$
then there exist $i<j$ such that 
$$
{\rm tr.deg.}_{\mathbb C} \mathbb C (\upsilon_i(\hat p_i), \upsilon_j(\hat p_j)) = {\rm tr.deg.}_{\mathbb C} \mathbb C(\upsilon_i(\hat p_i) ) = {\rm tr.deg.}_{\mathbb C} \mathbb C(\upsilon_j(\hat p_j)) = \dim G/B.$$
\end{cor}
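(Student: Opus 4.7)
The plan is to reduce the corollary to an application of Theorem \ref{Binary} applied to the $n$-fold product of $(Y,\mathscr Y)$ with itself. Specifically, I would take $Y_1 = \cdots = Y_n = Y$, $G_1 = \cdots = G_n = G$, $B_1 = \cdots = B_n = B$ and $\mathscr C_i = \mathscr C$ for each $i$, so that the product $(Y^n, \mathscr C^n)$ is a $(G^n, (G/B)^n)$-structure with associated connection $\nabla = \nabla_1 \times \cdots \times \nabla_n$. The product uniformization $\bar\upsilon = (\upsilon_1,\ldots,\upsilon_n)$, where $\upsilon_i$ depends only on the $i$-th copy of $G/B$, is a solution to $\mathscr Y^n$, and its graph in $\tilde J = J_\ast(Y^n,(G/B)^n)$ is a $\nabla$-horizontal leaf $\mathcal L$.

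The formal arc $\hat\gamma = (\hat p_1, \ldots, \hat p_n)$ composed with $\bar\upsilon$ and all its jets gives a formal map into $\tilde J$ whose image lies in $\mathcal L$. I would let $V$ be the Zariski closure of this formal arc in $\tilde J$, so that
$$\dim V = {\rm tr.deg.}_{\m C} \m C\left(\hat p_i, (\partial^\alpha \upsilon_i)(\hat p_i) : 1 \le i \le n,\ \alpha \in \m N^{\dim Y}\right),$$
and I would let $\mathcal V$ be the irreducible component of $V \cap \mathcal L$ containing the formal arc. Because each $\hat p_i$ is an invertible formal parametrization of a neighborhood of $p_i \in G/B$, the projection of $\mathcal L$ onto the $i$-th factor $G/B$ restricted to $\mathcal V$ is already of rank $\dim G/B = \dim Y$, so $\dim \mathcal V \ge \dim Y$ and the projection of $V$ to each $G_i/B_i = G/B$ is dominant. (If for some reason a projection fails to be dominant, I would first discard those redundant factors; the remaining indices still meet the hypotheses, since such factors contribute nothing to the transcendence degree beyond $\dim G/B + \dim G$.)

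Now I would verify the hypotheses of Theorem \ref{Binary}. Condition (1) (Galois group of each factor equals $G$) is immediate from simplicity (Definition \ref{simple}(2)). Condition (2) is by construction, and $\dim\mathcal V > 0$ because $\dim\mathcal V \ge \dim Y \ge 1$. For condition (3), the hypothesis ${\rm tr.deg.} < \dim Y + n\dim G$ combined with $\dim\mathcal V \ge \dim Y$ gives precisely $\dim V < \dim\mathcal V + n\dim G$. Condition (4) was verified above. Hence Theorem \ref{Binary} applies and, using simplicity again (Definition \ref{simple}(3)) to invoke conclusion (c), produces indices $i<j$ and a proper special subvariety $X_{ij}\subset Y\times Y$ which is a correspondence, with both projections onto and equal in dimension to $\dim Y$.

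To conclude, I would transport this back to the formal arc: the pair $(\upsilon_i(\hat p_i), \upsilon_j(\hat p_j))$ factors through $X_{ij}$, so ${\rm tr.deg.}_{\m C}\m C(\upsilon_i(\hat p_i), \upsilon_j(\hat p_j)) \le \dim X_{ij} = \dim Y$. Conversely, since $\hat p_i$ parametrizes a full neighborhood and $\upsilon_i$ is locally invertible, the image $\upsilon_i(\hat p_i)$ has transcendence degree exactly $\dim Y$, and similarly for $j$; combined with the upper bound, all three quantities equal $\dim Y = \dim G/B$ (the statement's $\dim G$ should read $\dim G/B$, consistently with Theorem \ref{thmC} in the introduction). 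The main obstacles I anticipate are bookkeeping ones: matching the dimensional inequality $\dim V < \dim\mathcal V + n\dim G$ to the stated transcendence-degree inequality (which requires the lower bound $\dim\mathcal V \ge \dim Y$), and handling the case where some of the $\hat p_i$ are not ``independent'' enough to give dominant projections on every factor, which forces a reduction to a subset of indices before invoking Theorem \ref{Binary}.
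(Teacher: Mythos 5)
Your proposal is correct and follows the route the paper intends: the paper gives no explicit proof for this corollary, asserting only that Theorem~C follows from Theorem~\ref{Binary}, and your argument faithfully supplies the omitted bookkeeping --- in particular the inequality $\dim\mathcal V\geq\dim Y$ coming from the full rank of each $\hat p_i$, the dominance of the factor projections (which also makes your parenthetical safety net unnecessary), and the passage through Theorem~\ref{Binary}(c) using simplicity. You also correctly diagnose that the displayed $\dim G$ in the final equality is a typo for $\dim G/B$, consistent with Theorem~C in the Introduction.
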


\begin{proof}
For $\upsilon : U \to Y$ a solution of $\mathscr Y$, one denote by $j\upsilon : U \to J^\ast(U , Y)$ the map obtained from the Taylor expansions. If $\upsilon$ is a solution of $\mathscr Y$ then $j\upsilon$ takes values in $\mathscr Y\subset J^\ast(G/B, Y)$
As $v_i(\hat p_i)$ and $v_i(\hat p_j)$ are Zariski dense in $G/B$, we obtain that the transcendence degree of the field spanned by any of them is equal to {$\dim G/B=\dim Y$}.

As explained in the discussion before Theorem \ref{Binary}, the product $\mathscr Y^n \to Y^n$ is a $(G^n, (G/B)^n )$ structure on $Y^n$. Let $V$ the Zariski
closure of $(j\upsilon_1(\hat p_1), \ldots, j\upsilon_n(\hat p_n))$, $\mathcal L$ the horizontal leaf that passes through the point
$(j\upsilon_1(p_1), \ldots, j\upsilon_n(p_n))$,
and $\mathcal V$ the component of the intersection $\mathcal L\cap V$
containing the point $(j\upsilon_1(p_1), \ldots, j\upsilon_n(p_n))$. By construction we have that $
{\rm tr.deg.}_{\mathbb C} \mathbb C\left( \hat p_i , (\partial^\alpha \upsilon_i)(\hat p_i) : 1 \leq i \leq n,\; \alpha \in \mathbb N^{\dim Y} \right)$ is equal to the complex dimension of $V$. Hence the hypothesis of Theorem \ref{Binary} is satisfied and we obtain a correspondence $X_{ij}\subset Y^2$ between $\upsilon_i(\hat p_i)$ and $\upsilon_j(\hat p_j)$ as required.
\end{proof}

\subsection{Strong minimality of the differential equations for uniformizers.} \label{Urk} We begin by recalling some of the relevant notions from the model theoretic approach to the study of differential equations. We let ${\mathcal L}_{m}=\{0,1,+,\cdot\}\cup \Delta$ denote the language of differential rings, where $\Delta=\{\partial_1,\ldots,\partial_m\}$ is a set of unary function symbols. From a model theoretic perspective, differential fields are regarded as ${\mathcal L}_{m}$-structures where the symbols $\partial_i$ are interpreted as commuting derivations, while the other symbols are interpreted as the usual field operations. 

A differential field $(K,\Delta)$ is differentially closed if it is existentially closed in the sense of model theory, namely if every finite system of $\Delta$-polynomial equations with a solution in a $\Delta$-field extension already has a solution in $K$. We use m-$DCF_0$ to denote the common first order\footnote{The description  given here is not a first order axiomatization. We refer the reader to \cite{McGrail} for the basic model theory of m-$DCF_0$.} theory of differentially closed fields in ${\mathcal L}_{m}$. It follows that m-$DCF_0$ has quantifier elimination, meaning that every definable subset of a differentially closed field $(K,\Delta)$ is a Boolean combination of Kolchin closed sets.

\begin{rem}
We will use the following model theoretic conventions
\begin{enumerate}
    \item The notation $\upsilon$ will be used both for a tuple and an element.
    \item We say that a tuple $\upsilon$ is algebraic over a differential field $K$, and write $\upsilon\in K^{\rm alg}$, if each coordinate of $\upsilon$ is algebraic over $K$.
\end{enumerate}
\end{rem}

{We fix a sufficiently saturated model $(\mathbb U,\Delta)$ of m-$DCF_0$ and assume that $\mathbb{C}$, the field of complex numbers, is contained in its field of constants, i.e., $\mathbb{C}\subset\{\upsilon\in\mathbb{U}:\partial(\upsilon)=0\text{ for all }\partial\in\Delta\}$.} Given a differential subfield $K$ of $\mathbb U$ and $\upsilon$ a tuple of elements from $\mathbb{U}$, the complete type of $\upsilon$ over $K$, denoted $tp(\upsilon/K)$, is the set of all ${\mathcal L}_{m}$-formulas with parameters in $K$ that $\upsilon$ satisfies. It is not hard to see that the set $$I_{p, K}=\{f\in K\{X\}: (f(X)=0)\in p\}=\{f\in K\{X\}: f(\upsilon)=0\}$$ is a differential prime ideal in the differential polynomial ring $K\{X\}$, where $p=tp(\upsilon/K)$. Using quantifier elimination, it is not hard to see that the map $p\mapsto I_{p, K}$ is a bijection between the set of complete types over $K$ and differential prime ideals in $K\{X\}$. Furthermore, it follows that a tuple $\upsilon_1$ is a realization of $tp(\upsilon/K)$ if and only if $I_{p, K}$ is the vanishing ideal of $\upsilon_1$ over $K$. Therefore in what follows there is no harm to think of $p=tp(\upsilon/K)$ as the ideal $I_{p, K}$.

\begin{defn} 
Let $K\subset\mathbb U$ be a differential field and $\upsilon$ be a tuple from $\mathbb U$.
\begin{enumerate}
\item Let $F\subset\mathbb U$ be a differential field extension of $K$. We say that $tp(\upsilon/F)$ {\em is a nonforking extension} of $tp(\upsilon/K)$ if $K\gen{\upsilon}$ is algebraically disjoint from $F$ over $K$, i.e., if $y_1,\ldots,y_k\in K\gen{\upsilon}$ are algebraically independent over $K$ then they are algebraically independent over $F$. We say that $tp(\upsilon/F)$ is a forking extension of $tp(\upsilon/K)$ if it is not a nonforking extension.
\item We say that $tp(\upsilon/K)$ has {\em $U$-rank 1} (or is minimal) if and only if $\upsilon\not\in K^{\rm alg}$ but every forking extension of $tp(\upsilon/K)$ is algebraic, that is has only finitely many realizations.
\end{enumerate}
\end{defn}

\begin{rem} Let $K$ and $\upsilon$ be as above and let $p=tp(\upsilon/K)$. Let $F$ be a differential field extension. Assume further that ${\rm tr.deg.}_KK\gen{\upsilon}=r$.
\begin{enumerate}
\item If $K$ is algebraically closed then $p$ has a unique non-forking extension to $F$, namely $tp(\hat\upsilon/F)$ for any $\hat\upsilon$ realizing $p$ such that ${\rm tr.deg.}_FF\gen{\hat\upsilon}=r$.
\item We have that $tp(\upsilon/F)$ is algebraic if and only if  $\upsilon\in F^{\rm alg}$.
\item Also $tp(\upsilon/F)$ is a nonforking extension of $tp(\upsilon/K)$ if and only if ${\rm tr.deg.}_KK\gen{\upsilon}={\rm tr.deg.}_FF\gen{\upsilon}$. 
\item In particular, the assumptions $p$ has $U$-rank 1 and ${\rm tr.deg.}_FF\gen{\upsilon}<r$ (so that $tp(\upsilon/F)$ is a forking extension of $p$) implies that $\upsilon\in F^{\rm alg}$.
\end{enumerate}
\end{rem}

Let $\mathscr Y\subset \mathbb{U}^\ell$ be a definable set and $K$ any differential field over which $\mathscr Y$ is defined. Assume that the order ${\rm ord}(\mathscr Y)=\sup\{{\rm tr.deg.}_KK\gen{\upsilon}: \upsilon\in \mathscr Y\}$ is finite; say ${\rm ord}(\mathscr Y)=r$. By the (complete) {\em type $p$ of $\mathscr Y$ over $K$} we mean that $p=tp(\upsilon/K)$ for any $\upsilon\in \mathscr Y$ such that ${\rm tr.deg.}_KK\gen{\upsilon}=r$.  Recall that we say that $\mathscr Y$ is {\em strongly minimal} if it cannot be written as the disjoint union of definable sets of order $r$, and for {\em any} differential field extension $F$ of $K$ and element $\upsilon\in \mathscr Y$, we have that ${\rm tr.deg.}_F F\gen{\upsilon}=0\text{ or }r$. We will make use of the following fact.
\begin{fct}\label{smU-rank1}
The definable set $\mathscr Y$ is strongly minimal if and only if its type over $K$ has U-rank 1, if {$\mathscr Y$ cannot be written as the disjoint union of $K^{\rm alg}$-definable sets of order $r$}, and for {\em any} element $\upsilon\in \mathscr Y$, we have that ${\rm tr.deg.}_KK\gen{\upsilon}=0\text{ or }r$.\footnote{This formulation is precisely suited for the argument we give later in the paper, and we have phrased it in this way so that it might be used more easily as a black box for non-experts. Taken together, the conditions might equivalently be written as - $\mathscr Y$ is the zero set of a prime differential ideal $P$, such that any differential ideals containing $P$ (even after base change to a larger differential field) have the property that their zero sets are finite.}  
\end{fct}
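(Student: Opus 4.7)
The plan is to verify the biconditional in two directions, using quantifier elimination in m-$DCF_0$ (which supplies the bijection between complete types and prime differential ideals) together with the fact that, in this $\omega$-stable theory, a differential field extension is non-forking precisely when it preserves the transcendence degree of the realization (as recorded in the remark immediately preceding the Fact).

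For the forward implication, I would assume $\mathscr{Y}$ is strongly minimal with ${\rm ord}(\mathscr{Y}) = r$. Condition (b) is a restatement of part of the definition of strong minimality, and condition (c) follows by specializing the defining condition to $F = K$. To establish U-rank 1 of the generic type $p = tp(\upsilon/K)$ (for any $\upsilon \in \mathscr{Y}$ with ${\rm tr.deg.}_K K\gen{\upsilon} = r$), I would observe that $\upsilon \notin K^{\rm alg}$ in the nontrivial case $r \geq 1$, and that for any forking extension $tp(\upsilon/F)$ one has ${\rm tr.deg.}_F F\gen{\upsilon} < r$; strong minimality then forces the $F$-trdeg to be $0$, so $\upsilon \in F^{\rm alg}$ and the extension is algebraic.

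For the reverse implication, assume (a), (b), (c). The key preliminary step is to show that (b) ensures the generic type of $\mathscr{Y}$ over $K$ is well-defined, i.e., all realizations in $\mathscr{Y}$ attaining maximal trdeg over $K$ share a common complete type. By the Ritt--Raudenbush theorem and the correspondence between types and prime differential ideals, the radical differential ideal associated to $\mathscr{Y}$ decomposes into finitely many minimal primes (up to Galois conjugacy over $K$); each such prime component carries an order equal to the trdeg of its generic point, and if more than one component had order $r$ then one could carve out a disjoint union of $K$-definable subsets of $\mathscr{Y}$ each of order $r$, contradicting (b). Once $p$ is uniquely pinned down, strong minimality follows: given any $F \supseteq K$ and any $\upsilon \in \mathscr{Y}$, condition (c) yields ${\rm tr.deg.}_K K\gen{\upsilon} \in \{0, r\}$. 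In the case $0$, $\upsilon \in K^{\rm alg} \subseteq F^{\rm alg}$, so ${\rm tr.deg.}_F F\gen{\upsilon} = 0$. In the case $r$, $\upsilon$ realizes $p$; and if moreover ${\rm tr.deg.}_F F\gen{\upsilon} < r$, the extension is forking, whence algebraic by (a), forcing the $F$-trdeg to be $0$.

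The main obstacle I anticipate is justifying the uniqueness of the generic type from (b). This requires some care about the prime decomposition of Kolchin-closed sets over a possibly non-algebraically-closed base $K$, and about how Galois orbits of primes over $K^{\rm alg}$ assemble into $K$-definable pieces that one can then use to test condition (b). Once this well-definedness is in hand, the rest of the argument is simply a packaging of the definition of U-rank 1 together with the transcendence-degree characterization of non-forking in differentially closed fields.
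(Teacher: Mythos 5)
The paper does not actually give a proof of this Fact; the footnote signals it is offered as a black box, so your argument is supplying something the paper leaves to the reader. Your forward direction is correct and takes the expected route: (b) and (c) are immediate weakenings of the paper's two defining clauses, and $U$-rank $1$ follows because a forking extension drops the transcendence degree strictly below $r$, the second defining clause then forces it all the way to $0$, and transcendence degree $0$ over $F$ means the tuple is algebraic over $F$. Your use of (b) to pin down the generic type $p$ via the minimal-prime decomposition of the radical differential ideal over $K$ is also sound, although the parenthetical ``up to Galois conjugacy over $K$'' is unnecessary once you work directly with primes of $K\{X\}$.

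In the reverse direction there is a gap you should close. What you establish is the second clause of the paper's definition: for every $F\supseteq K$ and $\upsilon\in\mathscr Y$, $\operatorname{tr.deg.}_F F\gen{\upsilon}\in\{0,r\}$. You never return to the first clause, namely that $\mathscr Y$ cannot be written as the disjoint union of two definable subsets of order $r$. If ``definable'' there is read as ``$K$-definable'', the clause is literally (b) and nothing is missing; but the contrast between the unqualified ``definable'' in the definition and the explicit ``$K$-definable'' in the Fact strongly suggests the partitioning sets are allowed parameters from anywhere in $\mathbb U$. Under that reading, conditions (a), (b), (c) alone do \emph{not} rule out a partition over a larger field. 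A concrete illustration in one-variable $\mathrm{DCF}_0$ with $K=\mathbb Q$: the set $\{x:(x')^2=2x^2\}$ has a unique order-$1$ generic type over $\mathbb Q$ of $U$-rank $1$, admits no $\mathbb Q$-definable partition into two order-$1$ pieces, and every element has transcendence degree $0$ or $1$ over any $F$, yet over $\mathbb Q(\sqrt2)$ it splits into the disjoint infinite order-$1$ sets $\{x'=\sqrt2\,x\}$ and $\{x'=-\sqrt2\,x\}$. The missing ingredient is stationarity of $p$ (Morley degree $1$), which is automatic once $K$ is model-theoretically algebraically closed and which one should either assume explicitly or derive. Since in the paper's application $K$ may be replaced by an algebraically closed field such as $\mathbb C(\bar t)^{\rm alg}$, the Fact is true as used; your write-up should record that hypothesis, or else argue directly that $p$ has a unique non-forking extension to every $F\supseteq K$ before concluding.
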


\begin{defn}
Let $\mathscr{Y}\subset \mathbb{U}^m$ be a strongly minimal set and $K$ any differential field over which $\mathscr{Y}$ is defined. We say that $\mathscr{Y}$ {\em geometrically trivial} if for any distinct $\upsilon,\upsilon_{1},\ldots,\upsilon_{\ell}\in\mathscr{Y}$, if $\upsilon\in K\gen{\upsilon_{1},\ldots,\upsilon_{\ell}}^{\rm alg}$ then for some $1\leq i\leq\ell$, we have that $\upsilon\in K\gen{\upsilon_{i}}^{\rm alg}$.
\end{defn}

\sloppy We now aim to show that the set defined by the partial differential equations for any uniformizer is strongly minimal and geometrically trivial. We will need a general tool/phenomenon from model theory (more precisely stability theory) which has sometimes been informally called the {\em Shelah reflection principle} (what we need is a well-known consequence of definability of types in stable theories). We restrict our exposition to types of finite order. Let $F=F^{\rm alg}$ be any algebraically closed differential field and let $p=tp(\upsilon/ F)$ for some tuple $\upsilon$. Assume that ${\rm tr.deg.}_FF\gen{\upsilon}=\ell$. We say that a sequence $(\upsilon_i)_{i=1}^{\infty}$ is a {\em Morley sequence in $p$} if $\upsilon_{i+1}$ realizes the (unique) \emph{non forking extension} of $p$ over $F_i=F\gen{\upsilon_1,\ldots,\upsilon_i}^{\rm alg}$, i.e., in particular ${\rm tr.deg.}_{F_i}F_i\gen{\bar \upsilon_{i+1}}=\ell$. It follows that one can take $\upsilon_1=\upsilon$.

In general, when given a differential variety $\mathscr Y $ defined over a differential field $K$, $p$ the type of a generic solution of $\mathscr Y$ over $K$, $\upsilon $ a realization of $p$, and $F$ a differential field extension of $K$, we say that $tp(\upsilon / F )$ is a \emph{forking extension} of $p$ if ${\rm tr.deg.}_F F\gen{\upsilon} < {\rm tr.deg.}_K K\gen{\upsilon}.$ Otherwise, $tp(\upsilon / F )$ is a non forking extension of $p$ (if $K$ is algebraically closed, then this extension is unique). The next result gives a characterization of the kinds of fields one needs to consider while characterizing forking extensions of a type. 

\begin{fct}\label{NotSelah}{\cite[Lemma 2.28]{GST}}
Let $K$ be any differential field and let $p=tp(\upsilon/K)$ for some tuple $\upsilon$ with ${\rm tr.deg.}_KK\gen{\upsilon}=r$. Let $F=F^{\rm alg}$ be an algebraically closed differential field extension of $K$ such that $tp(\upsilon/F)$ is a forking extension of $p$. Then there is a finite initial segment $(\upsilon_1,\ldots,\upsilon_k)$ of a Morley sequence $(\upsilon_i)_{i=1}^{\infty}$ in $tp(\upsilon/F)$ such that ${\rm tr.deg.}_{K}K\gen{\upsilon_1,\ldots,\upsilon_k}<k\cdot r$. Note that the hypotheses here imply (assuming one selects a sequence with the given properties of minimal length) that ${\rm tr.deg.}_{F}F\gen{\upsilon_k} \leq {\rm tr.deg.}_{K\gen{\upsilon_1,\ldots,\upsilon_{k-1}}}K\gen{\upsilon_k}<r$, while $(\upsilon _1 , \ldots , \upsilon _{k-1} )$ is a Morley sequence over $K$ (and thus that ${\rm tr.deg.}_{K}K\gen{\upsilon_1,\ldots,\upsilon_{k-1}}=(k-1)\cdot r$). \footnote{In fact, one can actually arrange that ${\rm tr.deg.}_{F}F\gen{\upsilon_k} = {\rm tr.deg.}_{K\gen{\upsilon_1,\ldots,\upsilon_{k-1}}}K\gen{\upsilon_k}$ or even more specifically that a \emph{canonical base} for the forking extension is contained in the algebraic closure of the initial segment of the Morley sequence.}
\end{fct}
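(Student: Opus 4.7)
The plan is to invoke the machinery of canonical bases from stability theory, applied to the $\omega$-stable theory m-$DCF_0$. First, observe that we may replace $K$ by its relative algebraic closure in $\mathbb{U}$: this step preserves both $r = {\rm tr.deg.}_K K\gen{\upsilon}$ and the hypothesis that $tp(\upsilon/F)$ forks over $K$, so without loss of generality $K = K^{\rm alg}$. The type $q := tp(\upsilon/F)$ is then a complete type over an algebraically closed base of finite $U$-rank and hence admits a canonical base $c := Cb(q)$, characterized up to interalgebraicity by the property that $q$ does not fork over a set $K' \supseteq K$ precisely when $c \in (K')^{\rm alg}$. Applied to $K' = K$, this translates our forking hypothesis into the key fact $c \notin K^{\rm alg}$.

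The second input is the Shelah reflection principle alluded to in the paper: for any Morley sequence $(\upsilon_i)_{i \geq 1}$ in $q$ over $F$, the canonical base $c$ lies in $K\gen{\upsilon_1, \ldots, \upsilon_N}^{\rm alg}$ for some finite $N$. In the $\omega$-stable setting this is a consequence of definability of types together with the recovery of $q$ as the ``average type'' of a sufficiently long Morley sequence.

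To conclude, I would argue by contradiction. Suppose that for every $k \geq 1$, ${\rm tr.deg.}_K K\gen{\upsilon_1, \ldots, \upsilon_k} = kr$. Since each $\upsilon_i$ realizes $p$ and satisfies ${\rm tr.deg.}_K K\gen{\upsilon_i} = r$, the equality forces ${\rm tr.deg.}_{K\gen{\upsilon_1, \ldots, \upsilon_{i-1}}^{\rm alg}} K\gen{\upsilon_1, \ldots, \upsilon_i} = r$ for every $i$, which by the characterization of non-forking recalled in the paper means that $(\upsilon_i)$ is simultaneously a Morley sequence in $p$ over $K$. Being a Morley sequence over the algebraically closed base $K$, the tuple $(\upsilon_i)$ is totally indiscernible over $K$, and standard properties of canonical bases in stable theories then force $c \in K^{\rm alg}$, contradicting the previous paragraph. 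Thus some finite initial segment must violate the transcendence equality, yielding the desired $k$.

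The main obstacle, were one to carry this out in full detail, is upgrading the conclusion to the sharper form mentioned in the footnote, namely that $k$ can be chosen so that $(\upsilon_1, \ldots, \upsilon_{k-1})$ remains a Morley sequence in $p$ over $K$ while $\upsilon_k$ realizes a forking extension over $K\gen{\upsilon_1, \ldots, \upsilon_{k-1}}$ satisfying ${\rm tr.deg.}_F F\gen{\upsilon_k} = {\rm tr.deg.}_{K\gen{\upsilon_1,\ldots,\upsilon_{k-1}}} K\gen{\upsilon_k}$. This refinement localizes the canonical base inside the algebraic closure of the initial segment and follows from symmetry and finite character of forking in $\omega$-stable theories.
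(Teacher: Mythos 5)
The paper itself gives no proof of this statement; it is cited directly as Lemma 2.28 of Pillay's \emph{Geometric Stability Theory}, so there is no internal argument to compare against. Judged on its own terms, your outline is correct and uses exactly the tools one would expect (canonical bases, definability of types, Morley sequences). The WLOG reduction to $K=K^{\rm alg}$ is fine, the translation of forking into $c=Cb(q)\notin K^{\rm alg}$ is right, and the Shelah reflection step $c\in K\gen{\upsilon_1,\ldots,\upsilon_N}^{\rm alg}$ is the correct input.

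The one place you compress too much is the final sentence of the contradiction, where you assert that indiscernibility of a $K$-Morley sequence and ``standard properties of canonical bases'' immediately force $c\in K^{\rm alg}$. This is true, but it rests on a nontrivial fact: for a Morley sequence $(\upsilon_i)$ in $q$, one has $Cb(q)=Cb(tp(\upsilon_1/\upsilon_2,\upsilon_3,\ldots))$, and the latter lies in ${\rm acl}(K)$ precisely when the sequence is $K$-independent. You may prefer the shorter route that avoids this: from $\upsilon_{N+1}\ind_K(\upsilon_1,\ldots,\upsilon_N)$ and $c\in{\rm acl}(K,\upsilon_1,\ldots,\upsilon_N)$ deduce $\upsilon_{N+1}\ind_K c$; combined with $\upsilon_{N+1}\ind_{Kc}F$ (since $q$ is based on $c$) and transitivity of nonforking, this gives $\upsilon_{N+1}\ind_K F$, directly contradicting the hypothesis that $q=tp(\upsilon/F)$ forks over $K$. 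This version also lands squarely on the transcendence-degree phrasing of the Fact without a detour through $c\in K^{\rm alg}$. You are right that the footnote's sharpening --- that a minimal-length witness gives ${\rm tr.deg.}_FF\gen{\upsilon_k}={\rm tr.deg.}_{K\gen{\upsilon_1,\ldots,\upsilon_{k-1}}}K\gen{\upsilon_k}$ and $(\upsilon_1,\ldots,\upsilon_{k-1})$ a $K$-Morley sequence --- requires the extra localization of the canonical base; flagging it as the delicate part is accurate.
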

The intuition here is that we start with a Morley sequence in $tp(\upsilon/F)$ and forking is captured in the fact that at some point this appropriately chosen sequence ceases to be a Morley sequence in $p=tp(\upsilon/K)$ (recall that we can start with $\upsilon_1=\upsilon$). We can now apply Theorem \ref{Binary} to study the set defined by the differential equations for uniformizers. Let us first explain the translation from the geometrical setting to m-$DCF_0$. 

We have fixed a simple $(G, G/B)$-structure $\mathscr C$ on a dimension $m$ algebraic variety $Y$  with $\dim(G)=k$. We consider an open subset of $U$ of $G/B$ and assume that $\bar t =(t_1, \ldots, t_m)$ are the coordinates on $U$ realizing a transcendence basis of $\mathbb C(G/B)$. We assume $Y \subset \mathbb C^\ell$. Now  a uniformization of the $(G,G/B)$-structure on $Y$, say $\upsilon :U \to Y \subset \mathbb C^\ell$, will be described by a system of partial differential equations in variables $t_1, \ldots ,t_m$ and unknowns the $\ell$ coordinates of $\upsilon$. 

{We assume throughout that our universal field $\mathbb{U}$ contains elements $t_1, \ldots, t_m$ such that $\partial_i t_i=1$ and $\partial_j t_i=0$. We denote by $\mathscr Y$ the $(\partial_1,\ldots,\partial_m)$-differential equations satisfied by $\upsilon$ together with the inequations ensuring that the rank of the Jacobian matrix of solutions is $m$. By abuse of notation, $\mathscr Y\subset \mathbb{U}^{\ell}$ also denotes the solution set it defines. Next let $K$, with $\mathbb{C}\subseteq K\subseteq \mathbb{C}(\bar t)^{\rm alg}$, for some (any) differential field over which $\mathscr Y$ is defined. In general $K\neq\mathbb{C}$.

\begin{rem}\label{genericAll} We have the following observations
 \begin{enumerate}
 \item The assumption on the rank of the Jacobian matrix is implicitly part of the formalism of Subsection \ref{sectionUniformization}. When $\dim(Y)=1$, this corresponds to the assumption that one only considers non-constant solutions.
 \item Using Lemma \ref{lm:total_galois} (2) for any $\upsilon\in \mathscr Y$, we have that ${\rm tr.deg.}_{\mathbb{C}(\bar t)}\mathbb{C}(\bar t)\gen{\upsilon}=k$. Hence it also follows that ${\rm tr.deg.}_{K}K\gen{\upsilon}=k$. 
 \end{enumerate}
\end{rem}
}

The following statement, which is needed to prove the next theorem, is an abstract reformulation of a special case of Corollary \ref{BinaryTransc}. 

\begin{cor}\label{CorMainHigher}
Let $\mathscr Y$ be a $(G,G/B)$-structure on an $m$-dimensional algebraic variety $Y$ with simple Galois group $G$ of dimension $\dim(G)=k$. Let $\upsilon_1,\ldots,\upsilon_n \in \mathscr Y$ be distinct solutions. If

$$\text{\rm tr.deg.}_{\mathbb{C}(\bar t)}{\mathbb{C}(\bar t)}\gen{\upsilon_1,\ldots,\upsilon_n}<kn,$$
then for some $i<j$, we have that 
$${\rm tr.deg.}_{\mathbb C(\upsilon_i)}\mathbb{C}(\upsilon_i,\upsilon_j) \leq m-1.$$
If we further assume that $(Y,\mathscr Y)$ has finitely many maximal positive dimensional $Z$-special subvarieties, where $Z$ is the center of $G$, then $\upsilon_i\in \mathbb{C}(\upsilon_j)^{\rm alg}$, that is each component of $\upsilon_i$ is algebraic over $\mathbb{C}(\upsilon_j)$.
\end{cor}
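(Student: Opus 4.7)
The plan is to derive Corollary~\ref{CorMainHigher} as an abstract, model-theoretic reformulation of Theorem~\ref{Binary} (equivalently, of Corollary~\ref{BinaryTransc}) applied to the $n$-fold self-product of the structure $(Y,\mathscr Y)$. I would take all formal parameterizations in Corollary~\ref{BinaryTransc} to be the identity, $\hat p_i = \bar t$, so that $(\partial^\alpha\upsilon_i)(\hat p_i)$ is literally the derivative $\partial^\alpha\upsilon_i$ in $\mathbb U$. Since $t_1,\ldots,t_m$ form a transcendence basis of $\mathbb C(G/B)$ and the differential field $\mathbb C(\bar t)\langle \upsilon_1,\ldots,\upsilon_n\rangle$ is by definition the field obtained from $\mathbb C(\bar t)$ by adjoining all partials of the $\upsilon_i$, one has
$$
{\rm tr.deg.}_{\mathbb C}\,\mathbb C\bigl(\bar t,\,(\partial^\alpha\upsilon_i)(\bar t):1\le i\le n,\,\alpha\in\mathbb N^m\bigr) = m + {\rm tr.deg.}_{\mathbb C(\bar t)}\,\mathbb C(\bar t)\langle\upsilon_1,\ldots,\upsilon_n\rangle.
$$
Under this identity, the hypothesis ``$<nk$'' of the corollary is exactly the hypothesis ``$<\dim Y+n\dim G$'' required by Corollary~\ref{BinaryTransc}.

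For the first conclusion I invoke Theorem~\ref{Binary}(a): it produces indices $i<j$ and a proper $(\nabla\times\nabla)$-special subvariety $X_{ij}\subset Y\times Y$ with both projections surjective, obtained as the Zariski closure of the image in $Y\times Y$ of the locus of $\bar\upsilon$ in the jet space. In particular $X_{ij}$ is the Zariski closure over $\mathbb C$ of the locus of $(\upsilon_i,\upsilon_j)$, so ${\rm tr.deg.}_{\mathbb C}\mathbb C(\upsilon_i,\upsilon_j)=\dim X_{ij}\le 2m-1$; and surjectivity of the first projection makes $\upsilon_i$ a generic point of $Y$, so ${\rm tr.deg.}_{\mathbb C}\mathbb C(\upsilon_i)=m$. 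Subtracting gives
$$
{\rm tr.deg.}_{\mathbb C(\upsilon_i)}\,\mathbb C(\upsilon_i,\upsilon_j) \;=\; \dim X_{ij}-m \;\le\; m-1,
$$
which is the first conclusion. For the second, the additional hypothesis of finiteness of maximal positive-dimensional $Z$-special subvarieties is precisely what is needed to invoke Theorem~\ref{Binary}(c); that upgrades $X_{ij}$ to a correspondence, i.e.\ $\dim X_{ij}=m$ with both projections finite-to-one onto $Y$. Hence ${\rm tr.deg.}_{\mathbb C(\upsilon_i)}\mathbb C(\upsilon_i,\upsilon_j)=0$, giving $\upsilon_i\in\mathbb C(\upsilon_j)^{\rm alg}$ and symmetrically.

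The only real work, and therefore the main obstacle in a full write-up, is the geometric/differential-algebraic bookkeeping needed to set up Theorem~\ref{Binary}: one must identify the $n$-tuple $\bar\upsilon=(\upsilon_1,\ldots,\upsilon_n)$ together with all of its partials as a point of the jet space $\tilde J$, its locus-closure over $\mathbb C$ as the variety $V$, its intersection with the product horizontal leaf $\mathcal L$ as the component $\mathcal V$, and check the standing hypotheses (2)--(4) of that theorem --- positivity of $\dim\mathcal V$ is automatic from Remark~\ref{genericAll}(2) once $\dim G\ge1$, and dominance of the projections to each $G/B$ holds because the chosen parameterization is the identity. Once this dictionary is in place, the two conclusions follow mechanically as indicated.
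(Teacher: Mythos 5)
Your high-level plan — reduce Corollary \ref{CorMainHigher} to Theorem \ref{Binary} (equivalently Corollary \ref{BinaryTransc}), take the parameterization to be the identity, and match the transcendence-degree bounds — is exactly the strategy of the paper. Your dimension bookkeeping (the shift by $m$, the comparison of hypotheses, extracting the conclusions from parts (a) and (c)) is also correct.

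However, there is a genuine gap in what you dismiss as ``the only real work \dots bookkeeping.'' In Corollary \ref{CorMainHigher} the solutions $\upsilon_1,\dots,\upsilon_n$ are abstract elements of a saturated differentially closed field $\mathbb U$, not holomorphic maps on a neighborhood of a point of $G/B$. Theorem \ref{Binary} (and Corollary \ref{BinaryTransc}) are statements about analytic objects: they require a genuine horizontal \emph{leaf} $\mathcal L$ in the jet bundle (defined by analytic continuation via Frobenius) and an intersection $V\cap\mathcal L$. For an abstract tuple in $\mathbb U$ there is no horizontal leaf to intersect with; the tuple and its partials give a $\mathbb U$-rational point of $\tilde J$ and a Zariski locus $V$, but no analytic $\mathcal L$. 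The paper bridges this by first reducing to a finitely generated algebraically closed subfield $\mathcal C\subset\mathbb C$ over which the transcendence degree is unchanged, and then invoking \emph{Seidenberg's embedding theorem} to realize the differential field $\mathcal C(\bar t)\gen{\upsilon_1,\dots,\upsilon_n}$ inside a field of meromorphic functions $\mathcal M(U)$ on a domain $U\subset\mathbb C^m$. Only after this identification are the $\upsilon_i$ bona fide local holomorphic solutions of $\mathscr Y$, and only then does Theorem \ref{Binary} literally apply. That two-step reduction (f.g.\ base field, then Seidenberg) is the substantive content of the proof that your write-up omits, and without it the proof does not compile: you cannot intersect the locus of an abstract tuple with a leaf that does not exist.
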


\begin{proof}
Assume that $\text{tr.deg.}_{\mathbb{C}(\bar t)}{\mathbb{C}(\bar t)}\gen{\upsilon_1,\ldots,\upsilon_n}=r<kn$ and let $\mathcal{C}$ be a finitely generated (over $\mathbb{Q}$) algebraic closed subfield of $\mathbb{C}$ such that $$\text{tr.deg.}_{\mathcal{C}(\bar t)}{\mathcal{C}(\bar t)}\gen{\upsilon_1,\ldots,\upsilon_n}=r.$$ For example we can take $\mathcal{C}$ to be generated by the complex coefficients of the polynomial defining the algebraic relations over $\mathbb{C}(\bar t)$ between $\upsilon_1,\ldots,\upsilon_{n}$ and derivatives.

Applying Seidenberg's embedding theorem to the field ${\mathcal{C}(\bar t)}\gen{\upsilon_1,\ldots,\upsilon_{n}}$, we may assume that $\upsilon_1,\ldots,\upsilon_{n}$ are elements of $\mathcal{M}(U)$, the field of meromorphic functions on an open connected domain $U\subset \mathbb{C}^m$. Using Theorem \ref{Binary}, since $$\text{tr.deg.}_{\mathcal C}{\mathcal C(\bar t)}\gen{\upsilon_1,\ldots,\upsilon_{n}}<kn+m,$$ for some $1\leq i <j\leq n$, we have that ${\rm tr.deg.}_{\mathbb C(\upsilon_i)}\mathbb{C}(\upsilon_i,\upsilon_j) \leq m-1$. 

If we further assume that $(Y,\mathscr Y)$ has finitely many maximal irreducible $Z$-special subvarieties then from Theorem \ref{Binary} (c) we get that $\upsilon_i\in \mathbb{C}(\upsilon_j)^{\rm alg}$.
\end{proof}

\begin{thm}\label{Urank1}
Assume that $(Y,\mathscr{Y})$ is simple. Then $\mathscr{Y}$ is strongly minimal and  geometrically trivial. Furthermore, if we let $(F,\partial_1,\ldots,\partial_m)$ be a differential extension of $K$ and let $\upsilon_1$, $\upsilon_2\in \mathscr Y$ with $\upsilon_1$, $\upsilon_2\not\in \mathscr Y(F^{\rm alg})$ then if $\upsilon_2\in F\gen{\upsilon_1}^{\rm alg}$, we have that $\upsilon_2\in \mathbb{C}(\upsilon_1)^{\rm alg}$.
\end{thm}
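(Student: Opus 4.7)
The argument is organized in three pieces: strong minimality via U-rank one, the ``furthermore'' claim via a Morley sequence of pairs, and geometric triviality as a quick consequence of the ``furthermore'' claim. Throughout, the main machinery is Fact \ref{NotSelah} (Shelah reflection) combined with Corollary \ref{CorMainHigher} (the differential-algebraic version of the binary statement), plus careful use of $F$-independence in Morley sequences.

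For strong minimality I will invoke Fact \ref{smU-rank1}. Remark \ref{genericAll}(2) gives ${\rm tr.deg.}_K K\langle \upsilon \rangle = k$ for every $\upsilon \in \mathscr Y$, handling the transcendence-degree condition, and Lemma \ref{lm:total_galois}(2) yields the non-decomposition condition. It thus remains to verify U-rank one. Suppose, toward a contradiction, that for some algebraically closed $F \supseteq K$ the type $\operatorname{tp}(\upsilon/F)$ is a non-algebraic forking extension of $\operatorname{tp}(\upsilon/K)$, and set $s := {\rm tr.deg.}_F F\langle \upsilon \rangle$, so $0 < s < k$. Fact \ref{NotSelah} supplies an initial segment $(\upsilon_1, \ldots, \upsilon_n)$ of a Morley sequence in $\operatorname{tp}(\upsilon/F)$ with ${\rm tr.deg.}_K K\langle \upsilon_1, \ldots, \upsilon_n \rangle < n k$; since $K \subseteq \mathbb{C}(\bar t)^{\rm alg}$, the same bound holds over $\mathbb{C}(\bar t)$, so Corollary \ref{CorMainHigher} produces indices $i < j$ with $\upsilon_i \in \mathbb{C}(\upsilon_j)^{\rm alg} \subseteq F\langle \upsilon_j \rangle^{\rm alg}$. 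This forces ${\rm tr.deg.}_F F\langle \upsilon_i, \upsilon_j \rangle = s$, but $F$-Morley independence of $\upsilon_i$ and $\upsilon_j$ requires this quantity to equal $2s$; hence $s = 0$, a contradiction.

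For the ``furthermore'' claim, fix $F, \upsilon_1, \upsilon_2$ as in the statement; WLOG $F = F^{\rm alg}$. If already $\upsilon_2 \in K\langle \upsilon_1 \rangle^{\rm alg}$, then ${\rm tr.deg.}_{\mathbb{C}(\bar t)} \mathbb{C}(\bar t)\langle \upsilon_1, \upsilon_2 \rangle = k < 2k$, and Corollary \ref{CorMainHigher} with $n = 2$ immediately gives $\upsilon_2 \in \mathbb{C}(\upsilon_1)^{\rm alg}$. Otherwise, $\upsilon_1, \upsilon_2$ are $K$-independent, so the joint type $\operatorname{tp}((\upsilon_1, \upsilon_2)/K)$ has transcendence degree $r = 2k$ while $\operatorname{tp}((\upsilon_1, \upsilon_2)/F)$ has transcendence degree at most $k$, a forking extension. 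Apply Fact \ref{NotSelah} to this pair-type to obtain a Morley sequence of pairs $((u_i, v_i))_{i \geq 1}$ in $\operatorname{tp}((\upsilon_1, \upsilon_2)/F)$ and an initial segment of length $N$ with ${\rm tr.deg.}_K K\langle u_1, v_1, \ldots, u_N, v_N \rangle < 2 N k$; take $(u_1, v_1) = (\upsilon_1, \upsilon_2)$. Applying Corollary \ref{CorMainHigher} to the $2N$ elements produces two of them in a nontrivial $\mathbb{C}$-algebraic relation. This relation is $F$-definable because $\mathbb{C} \subseteq F$, and the $F$-indiscernibility of the Morley sequence of pairs transfers it to the analogous pair-of-pairs at any indices. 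A cross-pair relation forces $F$-algebraic dependence between elements drawn from $F$-independent Morley pairs, contradicting ${\rm tr.deg.}_F F\langle \bar\upsilon^{(\alpha)}, \bar\upsilon^{(\beta)} \rangle = 2k$; here one uses that, within each pair, $v_\alpha \in F\langle u_\alpha \rangle^{\rm alg}$ and symmetrically $u_\alpha \in F\langle v_\alpha \rangle^{\rm alg}$, so $F\langle \bar\upsilon^{(\alpha)} \rangle = F\langle u_\alpha \rangle^{\rm alg} = F\langle v_\alpha \rangle^{\rm alg}$. Only a within-pair relation remains; transferring it via $F$-indiscernibility to $(u_1, v_1) = (\upsilon_1, \upsilon_2)$ gives $\upsilon_2 \in \mathbb{C}(\upsilon_1)^{\rm alg}$.

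Geometric triviality then follows by a short induction: given distinct $\upsilon, \upsilon_1, \ldots, \upsilon_\ell \in \mathscr Y$ with $\upsilon \in K\langle \upsilon_1, \ldots, \upsilon_\ell \rangle^{\rm alg}$ and $\ell$ minimal, setting $F' = K\langle \upsilon_2, \ldots, \upsilon_\ell \rangle^{\rm alg}$ one has $\upsilon, \upsilon_1 \notin (F')^{\rm alg}$ by minimality and $\upsilon \in F'\langle \upsilon_1 \rangle^{\rm alg}$ by hypothesis, so the ``furthermore'' claim forces $\upsilon \in \mathbb{C}(\upsilon_1)^{\rm alg} \subseteq K\langle \upsilon_1 \rangle^{\rm alg}$. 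The main technical obstacle lies in the second paragraph: one must ensure that the cross-pair case is genuinely eliminated by $F$-independence of the Morley pairs, which in turn hinges on the within-pair algebraicity $F\langle u_\alpha \rangle^{\rm alg} = F\langle v_\alpha \rangle^{\rm alg}$ arising from the forking extension. Everything else is routine bookkeeping combining Fact \ref{NotSelah} with Corollary \ref{CorMainHigher}.
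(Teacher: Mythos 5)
Your proof is correct, and the core machinery matches the paper exactly: strong minimality is obtained from $U$-rank one via Fact \ref{NotSelah} (Shelah reflection) combined with Corollary \ref{CorMainHigher}, exactly as in the paper's argument. The main difference is organizational. You prove the ``furthermore'' claim second, by applying Fact \ref{NotSelah} to the pair-type $\operatorname{tp}((\upsilon_1,\upsilon_2)/K)$ and running the cross-pair/within-pair dichotomy, and then you deduce geometric triviality from the ``furthermore'' claim by induction on $\ell$. The paper goes the other way: it observes that, once strong minimality is in hand, geometric triviality falls out of Corollary \ref{CorMainHigher} immediately --- if $\upsilon \in K\langle\upsilon_1,\ldots,\upsilon_\ell\rangle^{\rm alg}$ with the $\upsilon_i$ taken $K$-independent, the corollary yields a $\mathbb{C}$-algebraic pair, and since $\mathbb{C}\subseteq K$ this pair must involve $\upsilon$ --- and then asserts (tersely) that the ``furthermore'' claim follows from Corollary \ref{CorMainHigher} together with geometric triviality. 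Your pair-Morley-sequence argument supplies the detail that the paper elides for the ``furthermore'' claim, and it is the right kind of detail: one does need to eliminate cross-pair relations via $F$-independence and the within-pair interalgebraicity $F\langle u_\alpha\rangle^{\rm alg}=F\langle v_\alpha\rangle^{\rm alg}$, then transfer a within-pair relation to $(\upsilon_1,\upsilon_2)$ by $F$-indiscernibility. So the two routes are equivalent in substance; the paper's ordering makes geometric triviality cheaper, while yours makes the ``furthermore'' claim self-contained. One small presentation point: in your strong-minimality paragraph, the clean way to phrase the contradiction is that $\upsilon_i \in \mathbb{C}(\upsilon_j)^{\rm alg}\subseteq F\langle\upsilon_j\rangle^{\rm alg}$ forces $\operatorname{tr.deg.}_F F\langle\upsilon_i,\upsilon_j\rangle = s$ while $F$-Morley independence gives $2s$; this is exactly what you wrote and is correct, though the paper instead exploits the ``minimal length'' refinement of Fact \ref{NotSelah} so that the first $r$ entries are $K$-independent and the relation must involve $\upsilon_{r+1}$.
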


\begin{proof}
Using Lemma \ref{lm:total_galois} and Fact \ref{smU-rank1} (see also remark \ref{genericAll}) to show that $\mathscr{Y}$ is strongly minimal, all we have to show is that its type over $K$ has $U$-rank 1. Let $\upsilon \in \mathscr Y$ be such that $p=tp(\upsilon/K)$ is the type of $\mathscr Y$ over
$K$. {As pointed out in Remark \ref{genericAll} we have that $\text{\rm tr.deg.}_{K}K\gen{\upsilon}=k$.}
We need to show that every forking extension of $p$ is algebraic. Suppose that $F=F^{\rm alg}$ is a differential field extension of $K$ such that $q=tp(\upsilon/F)$ is a forking extension of $p$. Using Fact \ref{NotSelah} we can hence find distinct $\upsilon_1,\ldots,\upsilon_{r+1}\in \mathscr Y$, an initial segment in a Morley sequence in $q$, such that
\begin{itemize}
    \item $\text{tr.deg.}_{K}{K}\gen{\upsilon_1,\ldots,\upsilon_r}=k\cdot r$; but
    \item $\text{tr.deg.}_{K}{K}\gen{\upsilon_1,\ldots,\upsilon_{r+1}}<k\cdot (r+1)$.
\end{itemize}
Using Corollary \ref{CorMainHigher}, since $\text{tr.deg.}_{\mathbb C(\bar t)}{\mathbb C(\bar t)}\gen{\upsilon_1,\ldots,\upsilon_{r+1}}<k\cdot (r+1),$ for some $1\leq i\leq r$, we have that $\upsilon_{r+1}\in \mathbb{C}(\upsilon_i)^{\rm alg}$. Note that we use here that $\upsilon_1,\ldots,\upsilon_r$, and derivatives are algebraically independent over $K$. Hence $\text{tr.deg.}_{K}{K}\gen{\upsilon_1,\ldots,\upsilon_{r+1}}=k\cdot r$, that is $\upsilon_{r+1}\in {K}\gen{\upsilon_1,\ldots,\upsilon_r}^{\rm alg}$. It hence follows that the only way forking can occur is if $q$ is algebraic which is what we aimed to show.

Finally, given that $\mathscr Y$ is strongly minimal, the statement of Corollary \ref{CorMainHigher} is precisely geometric triviality. The last implication is a direct consequence of Corollary \ref{CorMainHigher} and geometric triviality.
\end{proof}

{\subsection{The case of covering maps}
Assume now that $\upsilon :U \to Y$ is a covering map. So we have $\Gamma $ a Zariski dense subgroup of $G (\m C)$ with the property that the induced action of $\Gamma$ on $G/B$ preserves $U$ and $\upsilon$ is a covering map of the complex algebraic variety $Y$ expressing $Y(\m C)$ as $\Gamma \setminus U$. We also assume that $\upsilon$ satisfies uniformizing algebraic differential equations. For example as in Section 4, this follows if  we assume, in addition, that the restriction of $\upsilon$ to some set containing a fundamental domain is definable in an o-minimal expansion of the reals as an ordered field.

Let $\text{Comm}_G(\Gamma)$ be the commensurator of $\Gamma$. Recall that by a $\text{Comm}_G(\Gamma)$-correspondence (also known as Hecke correspondence) on ${Y}(\mathbb{C})\times {Y}(\mathbb{C})$ we mean a subset of the form $$X_g=\{\upsilon(\bar t)\times \upsilon(g\cdot\bar t):\bar t \in U\}$$
where $g\in \text{Comm}_G(\Gamma)$. It follows that $X_g$ is given by equations $\Phi_{g}(X,Y)=0$ for some set $\Phi_{g}$ of polynomials with complex coefficients. So $\Phi_{g}(\upsilon(\bar t),\upsilon(g\bar t))=0$. With this notation, for $g_{1},g_{2}\in G(\mathbb{C)}$ we more generally say that $\upsilon(g_{1}\bar t)$ and $\upsilon(g_{2}\bar t)$ are in $\text{Comm}_G(\Gamma)$-correspondence if $\Phi_{g}(\upsilon(g_{1}\bar t),\upsilon(g_{2}\bar t))=0$ for some $g\in \text{Comm}_G(\Gamma)$. In other words if $g_1$ and $g_2$ are in the same coset of $\text{Comm}_G(\Gamma)$.

We keep the assumptions and notations from the previous subsection: we assume that our universal differential field $\mathbb{U}$ contains elements $t_1, \ldots, t_m$ such that $\partial_i t_i=1$ and $\partial_j t_i=0$. By abuse of notation, we denote by $\mathscr Y$ the set of solution of the  $(G,G/B)$ structure for $\upsilon$ and we write $K$, with $\mathbb{C}\subseteq K\subseteq \mathbb{C}(\bar t)^{\rm alg}$, for some (any) finitely generated differential field extension of $\m C$ over which $\mathscr Y$ is defined. We also write $k=\dim G$.

\begin{prop}
Let $\upsilon_1,\upsilon_2\in\mathscr Y$ be two distinct solutions. There is an embedding of $K \langle \upsilon_1, \upsilon_2 \rangle $ into the field of meromorphic functions on some open connected domain ${V} \subset G/B$ contained in the fundamental domain of $\Gamma$ such that $\upsilon_i=\upsilon(g_i\bar t)$ for some $g_i\in G(\m C)$. Consequently, if
$\upsilon_1 \in \mathbb C(\upsilon_2)^{\rm alg}$, then $(\upsilon_1, \upsilon_2) \in X_g$ for some $g \in {\rm Comm}_G(\Gamma)$.
\end{prop}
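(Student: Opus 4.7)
The plan is to proceed in two stages: first, realize $\upsilon_1$ and $\upsilon_2$ as genuine $G$-translates of $\upsilon$ via a Seidenberg embedding; then extract the commensurability condition from the algebraic dependence.

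For the first stage, I would apply Seidenberg's embedding theorem to the finitely generated differential field $K\langle \upsilon_1, \upsilon_2\rangle$ --- the same strategy used in the proof of Corollary \ref{CorMainHigher} --- embedding it into $\mathcal M(V)$, the field of meromorphic functions on a connected open $V \subset G/B$ which after shrinking we may place inside a fundamental domain for $\Gamma$. Because $\mathscr C$ is a $G$-principal bundle whose Galois group is the full group $G$, the local analytic solutions of $\mathscr Y$ form a $G$-homogeneous space under precomposition: any local solution has the form $\bar t \mapsto \upsilon(g\bar t)$ for some $g \in G(\mathbb C)$. Since lying in $\mathscr Y$ is a first-order property preserved by differential field embeddings, the images of $\upsilon_1$ and $\upsilon_2$ are meromorphic local solutions of $\mathscr Y$ and therefore take the desired form $\upsilon_i(\bar t) = \upsilon(g_i \bar t)$, which is the first assertion of the proposition.

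Now assume $\upsilon_1 \in \mathbb C(\upsilon_2)^{\rm alg}$ and set $g := g_2 g_1^{-1}$. After the biholomorphic change of variable $\bar s = g_1 \bar t$, the pair $(\upsilon_1(\bar t), \upsilon_2(\bar t))$ becomes $(\upsilon(\bar s), \upsilon(g\bar s))$, so $(\upsilon_1, \upsilon_2) \in X_g$ by definition, and it only remains to show $g \in \mathrm{Comm}_G(\Gamma)$. Let $W \subset Y \times Y$ be the Zariski closure of the graph of $(\upsilon_1, \upsilon_2)$. The hypothesis ensures that the projection $W \to Y$ to the second factor has generically finite fibres, and a transcendence-degree count (generic solutions of $\mathscr Y$ all have transcendence degree $\dim Y$ over $\mathbb C$) gives the same for the first projection.

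The main obstacle, which I take to be the crux of the proof, is extracting the commensurator condition from this finiteness. For any polynomial $P$ vanishing on $W$ we have $P(\upsilon(\bar s), \upsilon(g\bar s)) \equiv 0$; replacing $\bar s$ by $\gamma \bar s$ for $\gamma \in \Gamma$ and invoking $\upsilon(\gamma \bar s) = \upsilon(\bar s)$ yields $P(\upsilon(\bar s), \upsilon(g\gamma \bar s)) = 0$, so every value $\upsilon(g\gamma \bar s) = \upsilon\bigl((g\gamma g^{-1}) \cdot g\bar s\bigr)$ lies in the fibre of $W$ above $\upsilon(\bar s)$, which is finite. Since $\upsilon(h \bar s') = \upsilon(\bar s')$ holds exactly when $h \in \Gamma$ at generic $\bar s'$, the distinct values so obtained are indexed by $\Gamma/(\Gamma \cap g^{-1}\Gamma g)$, and finiteness forces $[\Gamma : \Gamma \cap g^{-1}\Gamma g] < \infty$. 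The symmetric argument applied to the first projection (using $\bar s = g^{-1}\delta g \bar s_0$ with $\delta \in \Gamma$) yields $[\Gamma : \Gamma \cap g\Gamma g^{-1}] < \infty$, and together these give $g \in \mathrm{Comm}_G(\Gamma)$, as required.
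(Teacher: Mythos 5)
Your overall architecture (Seidenberg embedding, then a double-coset/finiteness argument) matches the paper's, and your second stage is in fact more self-contained than the paper's, which at that point simply refers to the proof of a lemma in \cite{CasFreNag}. The transcendence-degree bookkeeping and the two index computations (one per projection of $W$) are correct and give exactly $g\in\mathrm{Comm}_G(\Gamma)$.

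The gap is in the first stage, at the sentence ``the local analytic solutions of $\mathscr Y$ form a $G$-homogeneous space under precomposition: any local solution has the form $\bar t\mapsto\upsilon(g\bar t)$.'' You treat this as a consequence of the principal-bundle/Galois structure, but it is precisely the nontrivial content of the first assertion of the proposition, and the principal bundle structure lives on $\mathscr C\to Y$, not on $\mathscr Y\to G/B$; in particular $G$ does not act on individual fibres of $\mathscr Y\to G/B$, so ``$G$-homogeneous'' as stated doesn't follow formally. The paper supplies the missing argument in two steps: first, since $\upsilon:U\to Y$ is a topological covering and $V$ can be taken simply connected, the meromorphic solution $\upsilon_i:V\to Y$ \emph{lifts} through $\upsilon$, giving $\upsilon_i=\upsilon\circ\phi_i$ for some holomorphic $\phi_i:V\to U$; second, from the differential equation $\chi(\upsilon_i)=\tilde\chi(\bar t)$ and the identity $\chi\circ\upsilon=\tilde\chi$ one gets $\tilde\chi(\phi_i(\bar t))=\tilde\chi(\bar t)$, and then the defining property of the generalized Schwarzian $\tilde\chi$ (namely $\tilde\chi(a)=\tilde\chi(b)$ iff $a=gb$ with $g\in G(\mathbb C)$) forces $\phi_i(\bar t)=g_i\bar t$. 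Without the lifting step and the Schwarzian characterization, your appeal to the Galois group does not by itself establish that every local solution is a $G$-precomposition of the fixed uniformizer $\upsilon$. A minor further point: your phrase ``$\upsilon(h\bar s')=\upsilon(\bar s')$ holds exactly when $h\in\Gamma$ at generic $\bar s'$'' should be read as an identity of germs of functions (not pointwise), which uses faithfulness of the $G$-action on $G/B$; worth making explicit.
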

\begin{proof}
We first show that we can write $\upsilon_i=\upsilon(g_i\bar t)$ for some $g_i\in G(\m C)$, where $\upsilon:U\rightarrow Y$ is the covering map. Let $V\subset U$ be a open connected domain which is properly contained in a fundamental domain of action of $\Gamma$. Applying Seidenberg's embedding theorem, we may assume that $\upsilon_1,\upsilon_2$ have coordinates in $\mathcal{M}(V)$, the field of meromorphic functions on $V$. It follows that for some functions $\phi_i:V\rightarrow U$, we can write $\upsilon_i=\upsilon(\phi_i(\bar t))$. Now since $\upsilon_i$ is a solution to $\chi(y)=\tilde\chi(\bar t),$ we have that $$\tilde\chi(\phi_i(\bar t))=\chi(\upsilon(\phi_i(\bar t)))=\tilde\chi(\bar t).$$ From $\tilde\chi(\phi_i(\bar t))=\tilde\chi(\bar t)$ it follows that $\phi_i(\bar t)=g_i\bar t$ for some $g_i\in G(\m C)$.

Finally if we assume there is the set $P$ of polynomials in $\m C[X,Y]$ such that $P(\upsilon_1,\upsilon_2)=0$, then we have that $P(\upsilon(g_1\bar t),\upsilon(g_2\bar t))=0$. Standard arguments using double cosets and the commensurator $\text{Comm}_G(\Gamma)$ of $\Gamma$ (cf.  proof of \cite[Lemmas 5.15]{CasFreNag}) shows that $g_1$ and $g_2$ are in the same coset of $\text{Comm}_G(\Gamma)$. Hence the result follows.
\end{proof}
Combining the above with Theorem \ref{Urank1} we obtain what can be considered a weak form of the Ax-Lindemann-Weierstrass Theorem with derivatives:
\begin{cor}\label{independence}
Assume that $(Y,\mathscr{Y})$ is simple. Let $\upsilon_1,\ldots,\upsilon_n\in \mathscr{Y}$ be distinct solutions that are not in any ${\rm Comm}_G(\Gamma)$-correspondence. Then
$${\rm tr.deg.}_KK\gen{\upsilon_1,\ldots,\upsilon_n}=nk,$$
that is the solutions and their derivatives are algebraically independent over $K$.
\end{cor}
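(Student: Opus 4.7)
The plan is to proceed by contradiction, using Theorem \ref{Urank1} to convert any multivariate algebraic relation among the $\upsilon_i$'s into a bivariate one over $\mathbb{C}$, and then the preceding proposition to extract a Hecke correspondence that contradicts the hypothesis.

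Specifically, recall from Remark \ref{genericAll}(2) that ${\rm tr.deg.}_K K\langle \upsilon_i \rangle = k$ for every $i$. Suppose for contradiction that ${\rm tr.deg.}_K K\langle \upsilon_1, \ldots, \upsilon_n \rangle < nk$. Adjoining the solutions to $K$ one at a time, each incremental transcendence degree must be either $0$ or $k$, since the strong minimality of $\mathscr{Y}$ (Theorem \ref{Urank1}) tells us the type of $\upsilon_i$ over $K$ has $U$-rank $1$. Hence, for some index $i$, the solution $\upsilon_i$ is algebraic over the field $K\langle \upsilon_1, \ldots, \upsilon_{i-1}, \upsilon_{i+1}, \ldots, \upsilon_n \rangle$ generated by the remaining $\upsilon_j$'s.

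Next, invoking geometric triviality (Theorem \ref{Urank1}), there must exist some $j \neq i$ with $\upsilon_i \in K\langle \upsilon_j \rangle^{\rm alg}$. Since $k = \dim G > 0$, both $\upsilon_i$ and $\upsilon_j$ lie outside $\mathscr{Y}(K^{\rm alg})$, so the final clause of Theorem \ref{Urank1} (applied with $F = K$) strengthens this to $\upsilon_i \in \mathbb{C}(\upsilon_j)^{\rm alg}$. The preceding proposition then yields some $g \in \text{Comm}_G(\Gamma)$ with $(\upsilon_i, \upsilon_j) \in X_g$, contradicting the hypothesis that no two of the $\upsilon_i$'s lie in a $\text{Comm}_G(\Gamma)$-correspondence.

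There is no serious obstacle here beyond careful bookkeeping: strong minimality, geometric triviality, and the dictionary between bivariate algebraic dependence and Hecke correspondences have all been assembled earlier in the section. The one subtle point to verify is the non-algebraicity hypothesis $\upsilon_i, \upsilon_j \notin \mathscr{Y}(K^{\rm alg})$ required to apply the last assertion of Theorem \ref{Urank1}, but this is immediate from the fact that any $\upsilon \in \mathscr{Y}$ has transcendence degree $k > 0$ over $K$.
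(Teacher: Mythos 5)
The proposal is correct and matches the argument the paper intends: the paper presents the corollary as following "by combining" the preceding proposition with Theorem \ref{Urank1}, and your chain — strong minimality to isolate a zero-increment index, geometric triviality to reduce to a bivariate dependence, the final clause of Theorem \ref{Urank1} to pass from $K$-algebraicity to $\mathbb{C}$-algebraicity, then the preceding proposition to extract a $\text{Comm}_G(\Gamma)$-correspondence — is precisely that combination. Your check that $\upsilon_i, \upsilon_j \notin \mathscr{Y}(K^{\rm alg})$ via Remark \ref{genericAll}(2) is the right way to license the last clause of Theorem \ref{Urank1}.
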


We can use Corollary \ref{independence} and arithmeticity to give a characterization of $\omega$-categoricity of the pregeometry associated with the solution set $\mathscr Y$ generalizing that given in \cite{CasFreNag}. First we recall the following deep result of Margulis (cf. \cite{Marg} or \cite[Chapter 6]{Zimmer}):

\begin{fct} Let $\Gamma$ an irreducible lattice. Then $\Gamma$ is arithmetic if and only if $\Gamma$ has infinite index in ${\rm Comm}_G(\Gamma)$.
\end{fct}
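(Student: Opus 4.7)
The statement is Margulis's commensurator criterion for arithmeticity of irreducible lattices in semisimple Lie groups, one of the landmark theorems of the 1970s. Since the paper explicitly calls this a ``deep result of Margulis'' and cites it as background, the proof is not something one would reproduce in a paper like this; nonetheless, here is the approach I would take if asked to sketch one. I would split into the two implications, which have quite different flavors.

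For the easy direction (arithmetic $\Rightarrow$ infinite index in $\text{Comm}_G(\Gamma)$), I would unwind the definition: by arithmeticity there is a semisimple $\mathbb{Q}$-group $H$ together with an isogeny $\varphi\colon H(\mathbb{R}) \to G$ (modulo compact factors) such that $\Gamma$ is commensurable with $\varphi(H(\mathbb{Z}))$. The key point is that every rational element conjugates an arithmetic subgroup to a commensurable arithmetic subgroup: for $h\in H(\mathbb{Q})$, the group $hH(\mathbb{Z})h^{-1} \cap H(\mathbb{Z})$ has finite index in both factors (clear denominators). Hence $\varphi(H(\mathbb{Q}))$ lies inside $\text{Comm}_G(\Gamma)$, and because $H(\mathbb{Q})$ is dense in $H(\mathbb{R})$ (strong approximation, or just density of rational points), it cannot be contained in any discrete overgroup of $\Gamma$. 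This forces $[\text{Comm}_G(\Gamma):\Gamma]=\infty$.

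The hard direction (infinite index $\Rightarrow$ arithmetic) is the substantive content. Here the strategy is to first upgrade the infinite-index hypothesis to \emph{density}: $C:=\text{Comm}_G(\Gamma)$ is dense in $G$. This density step uses that $\Gamma$ is a lattice in $G$ together with the fact that the normalizer of a dense subgroup in the commensurator either makes it discrete or dense; in the irreducible non-arithmetic case one then applies Margulis's superrigidity theorem to any homomorphism of $\Gamma$ into an algebraic group. Concretely, one uses elements of $C$ to produce automorphisms of $\Gamma$ (up to commensurability), applies superrigidity to extend each such automorphism to $G$, and then uses the resulting representations to construct a $\mathbb{Q}$-structure on $G$ (a choice of $\mathbb{Q}$-forms of the algebraic hull) with respect to which $\Gamma$ lies, up to commensurability, in the integral points. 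The construction of this $\mathbb{Q}$-form from the superrigidity data is the heart of the argument.

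The main obstacle is unquestionably the hard direction, and within it the invocation of Margulis's superrigidity theorem; everything else is essentially formal once density of $C$ and superrigidity are in place. Superrigidity itself depends on ergodic-theoretic boundary maps (the Furstenberg boundary, Mautner-type mixing arguments), harmonic analysis on symmetric spaces, and careful algebraic-group manipulations that are orthogonal to the differential-algebraic techniques of this paper. For this reason I would, in practice, proceed exactly as the authors do: cite the result as a black box and use it to derive the intended corollary about $\omega$-categoricity.
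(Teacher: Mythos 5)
The paper offers no proof of this Fact — it is stated as a recalled ``deep result of Margulis'' and used as a black box, exactly as you conclude one should do. Your sketch of the two directions (easy: rational points commensurate the integral points and are dense, so the commensurator cannot be discrete over $\Gamma$; hard: density of $\text{Comm}_G(\Gamma)$ plus commensurator superrigidity produce a $\mathbb{Q}$-form) is a fair high-level account of Margulis's argument, but nothing in the paper depends on those details, so your approach matches the paper's.
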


\begin{cor}
Assume that $\Gamma$ an irreducible lattice and $(Y,\mathscr{Y})$ simple. Then $\mathscr{Y}$ is non-$\omega$-categorical if and only if $\Gamma$ is arithmetic.
\end{cor}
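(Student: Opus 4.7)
The plan is to reduce the question to the group-theoretic criterion of Margulis by using the algebraic-closure analysis of $\mathscr{Y}$ already developed in Theorem \ref{Urank1} and Corollary \ref{independence}. Recall first that by Theorem \ref{Urank1} the set $\mathscr{Y}$ is strongly minimal and geometrically trivial, so the induced structure on $\mathscr{Y}$ is controlled entirely by the binary interalgebraicity relation. For such a structure the Ryll-Nardzewski theorem specializes to the well-known criterion: $\mathscr{Y}$ is $\omega$-categorical if and only if the associated pregeometry (algebraic closure in $\mathbb{U}$) is \emph{locally finite} on $\mathscr{Y}$, i.e.\ for some (equivalently every) generic $\upsilon\in\mathscr{Y}$ the set $\mathrm{acl}(K\cup\{\upsilon\})\cap\mathscr{Y}$ is finite. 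The first step of the proof is to state this equivalence cleanly for our setting.

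The second step is to count this algebraic closure in terms of cosets of $\Gamma$ inside $\mathrm{Comm}_G(\Gamma)$. Here the ``furthermore'' clause of Theorem \ref{Urank1} is crucial: if $\upsilon_2\in F\langle\upsilon_1\rangle^{\rm alg}$ for some differential field $F\supseteq K$ and $\upsilon_1,\upsilon_2$ are both generic solutions, then in fact $\upsilon_2\in\mathbb{C}(\upsilon_1)^{\rm alg}$. Thus computing algebraic closure over $K$ on generic solutions coincides with computing algebraic closure over $\mathbb{C}$, and we may apply the proposition preceding Corollary \ref{independence}: after embedding $K\langle\upsilon_1,\upsilon_2\rangle$ into a field of meromorphic functions on an open subset of a fundamental domain, we write $\upsilon_i=\upsilon(g_i\bar t)$ with $g_i\in G(\mathbb{C})$, and the relation $\upsilon_2\in\mathbb{C}(\upsilon_1)^{\rm alg}$ holds exactly when $g_1$ and $g_2$ represent the same coset of $\Gamma$ in $\mathrm{Comm}_G(\Gamma)$. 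Corollary \ref{independence} ensures conversely that two non-commensurable solutions are algebraically independent, so this coset description is exact. Consequently, for a fixed generic $\upsilon_1$, the set $\mathrm{acl}(K\cup\{\upsilon_1\})\cap\mathscr{Y}$ is in natural bijection with $\mathrm{Comm}_G(\Gamma)/\Gamma$ (up to finite fibres coming from the degree of the Hecke correspondences).

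The final step is to invoke the Margulis arithmeticity theorem quoted immediately before the corollary: for an irreducible lattice $\Gamma\subset G$, the index $[\mathrm{Comm}_G(\Gamma):\Gamma]$ is infinite if and only if $\Gamma$ is arithmetic. Putting the three steps together yields the chain of equivalences
\begin{equation*}
\mathscr{Y}\text{ is }\omega\text{-categorical}\iff\#\bigl(\mathrm{acl}(K\cup\{\upsilon_1\})\cap\mathscr{Y}\bigr)<\infty\iff[\mathrm{Comm}_G(\Gamma):\Gamma]<\infty\iff\Gamma\text{ non-arithmetic},
\end{equation*}
which is precisely the statement of the corollary.

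The main technical obstacle, I expect, will be the first reduction rather than the others. One must carefully match up the notion of $\omega$-categoricity of the \emph{induced structure} on $\mathscr{Y}$ with local finiteness of algebraic closure, and, more subtly, verify that computing $\mathrm{acl}$ over the parameter field $K$ agrees with computing it over $\mathbb{C}$ on generic realizations, so that the counting of the second step lands on the purely group-theoretic object $\mathrm{Comm}_G(\Gamma)/\Gamma$. This is exactly what the closing assertion of Theorem \ref{Urank1} provides, so the obstacle is really bookkeeping rather than substantive mathematics. (The same strategy was used in \cite{CasFreNag} for $\Gamma\subset\mathrm{PSL}_2(\mathbb{R})$; our generalizations of Theorem \ref{Urank1} and Corollary \ref{independence} to arbitrary simple $(G,G/B)$-structures make that template go through verbatim.)
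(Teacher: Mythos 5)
Your proposal is correct and is exactly the argument the paper has in mind; the corollary is stated without proof precisely because it is meant to be the concatenation of Corollary \ref{independence}, the preceding proposition on embeddings into fields of meromorphic functions, the ``furthermore'' clause of Theorem \ref{Urank1}, and the Margulis criterion, as you carry out. Two small bookkeeping remarks: the natural parameter for the solutions interalgebraic with a fixed generic $\upsilon_1 = \upsilon(g_1\bar t)$ is the right-coset space $\Gamma\backslash\mathrm{Comm}_G(\Gamma)$ (the left $\Gamma$-invariance of $\upsilon$ is what identifies $g$ and $\gamma g$), not $\mathrm{Comm}_G(\Gamma)/\Gamma$, though of course the two have the same cardinality; and the map $\Gamma g\mapsto\upsilon(g g_1\bar t)$ is actually a bijection, so the hedge ``up to finite fibres'' is unnecessary. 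Neither point affects the conclusion, since $[\mathrm{Comm}_G(\Gamma):\Gamma]<\infty$ is all that Margulis requires.
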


Up to commensurability, there are very few known non-arithmetic lattices. For example, we direct the reader to \cite[Section 2.5]{BUball} for a very good summary of the known commensurability classes of non-arithmetic complex hyperbolic lattices.

 \section{Application to products of curves }\label{CurveCase}
In this section, we pay particular attention to the case $\dim Y=1$. We aim to show how all the concepts define in Section \ref{sectionproduct} and Subsection \ref{sectionUniformization} can be explicitly derived in this situation. 
\subsection{Projective structure on curves}

Consider the group $G={\rm PSL}_2(\m C)$ and its subgroup $B$ of lower triangular matrices so that $ G/B=\mathbb{CP}_1$. A $(G, G/B)$-structure on a curve $Y$ is usually called a projective structure (cf.\cite{Dumas}). Let us describe it in the formalism of Subsection \ref{sectionUniformization}. We consider $Y$ a complex affine algebraic curve defined by a polynomial equation,
$$P(y,w) = 0.$$
Without loss of generality we assume that $y$ is a local coordinate at every point of $Y$, that is, the differential form $dy$ has no zeroes on $Y$. The algebraic structure of the jet space $J(Y,\mathbb{CP}_1)$ is given by its ring of regular functions and the latter is the $\mathcal D_Y$-algebra generated by $\mathcal O_Y \otimes \mathcal O_{\mathbb{CP}_1}$. Let us take $t$ to be the affine coordinate in $\mathbb{CP}_1$ and write the ring of regular functions on $J(Y,\mathbb{C}) \subset J(Y,\mathbb{CP}_1)$ as 
 $$
 \mathbb C[Y][t, \dot t, \ddot t, \ldots].
 $$
The $\mathcal D_Y$-differential structure of this ring is given by the action of $$ \frac{d}{dy} =\frac{\partial}{\partial y} + \dot{t}\frac{\partial}{\partial t} + \ddot{t}\frac{\partial}{\partial \dot t} +\dddot{t}\frac{\partial}{\partial \ddot t} + \cdots. $$
The open subset $J^\ast(Y,\mathbb{CP}_1)$ is the set of jets of submersive maps. It is defined by the inequation $\dot t \not = 0$. Its ring of regular functions is denoted by $\mathcal O_{J^\ast}$. To describe the set $\mathscr C \subset J^\ast(Y,\mathbb{CP}_1)$ of jets of charts of the projective structure, we need to introduce the Schwarzian derivative with respect to the coordinate $y$, namely 
$S_y(t) = \frac{\dddot t}{\dot t} -\frac{3}{2} \left( \frac{\ddot t}{\dot t}\right)^2.$

Let $R$ be a function on $Y$ and consider in $\mathcal O_{J^\ast}$ the $\frac{d}{dy}$-ideal generated by $$S_y(t) - 2R(y,w).$$ Since $S_y(t_1) =S_y(t_2)$ if and only if $t_1 = \frac{at_2 +b}{ct_2+d}$ for some $\begin{pmatrix}a&b\\c&d\end{pmatrix}\in {\rm GL}_2(\m C)$, the local analytic solutions of this equation are charts of a projective structure. The zero set of this differential ideal is $\mathscr C$. 
 
Let us give a down to earth description of $\mathscr C$ and the induced $\mathscr Y$. As the equation has order $3$ and is degree 1 in $\dddot t$, $\mathscr C$ is isomorphic as an algebraic variety to $J^*_2(Y,\mathbb{CP}_1)$ and the $\mathcal D_Y$-structure induced  on its ring of regular maps $
 \mathbb C[Y][t, \dot t, \frac{1}{\dot t},\ddot t] $ is given by the above equation
 $$\frac{d}{dy} = \frac{\partial }{\partial y} + \dot t\frac {\partial}{\partial t} + \ddot t\frac {\partial}{\partial \dot t} + \left(\frac{3}{2}\frac{\ddot t ^2}{\dot t} + 2R(y,w) \dot t \right )\frac {\partial}{\partial \ddot t}\;.$$

The set of uniformizations $\mathscr Y \subset J^*(\mathbb{CP}_1,Y)$ is isomorphic to $J^*_2(\mathbb{CP}_1,Y)$ as an algebraic variety. We also have that the open subsets $J^*(Y,\mathbb{C})$ and $J^*(\mathbb{C},Y)$ are isomorphic as algebraic varieties. The ring of regular functions on $J^*(\mathbb{C},Y)$ is 
 $$
 \mathbb C[t][Y][y',\frac{1}{y'}, y'', \ldots]
 $$
and the isomorphism $J^*(Y,\mathbb{C})\simeq J^*(\mathbb{C},Y)$ is given by usual formula to express the derivation of a reciprocal function, namely $y' = \frac{1}{\dot t}$, $y'' = \frac{-\ddot t}{\dot t^3}$, ...
 
Notice that as pro-algebraic varieties $J^*(Y,\mathbb{CP}_1) \simeq J^*(\mathbb{CP}_1,Y) $ and under this isomorphism $\mathscr C$ and $\mathscr Y$ coincide. Moreover, the differential structures are not the same as on $J^*(\mathbb{CP}_1,Y)$ the differential structure of the structural ring is given by 

$$
\frac{d}{dt} =\frac{\partial}{\partial t} + y'\frac{\partial}{\partial y} + y''\frac{\partial}{\partial y'} +y'''\frac{\partial}{\partial y'} + \cdots.
$$
The subset $\mathscr Y$ is the zero set of the differential ideal generated by  $S_t(y) + 2R(y,w) y'^2=0$. As already mentioned $\mathscr Y$ is isomorphic to  
$J^*_2(Y,\mathbb{CP}_1)$. The $\frac{d}{dt}$ -differential structure induced  on its ring of regular maps $
 \mathbb C[t][Y][y', \frac{1}{y'},y''] $ is given by the above equation
 
 $$
 \frac{d}{dt} = \frac{\partial }{\partial t} + y'\frac {\partial}{\partial y} + y''\frac {\partial}{\partial y'} + \left(\frac{3}{2}\frac{{y'' }^2}{y'} - 2R(y,w) {y'}^3 \right )\frac {\partial}{\partial y''}\;.$$

To describe the connection form observe that the choice of our coordinates on open subset $J^\ast_2(Y,\mathbb C)$ induces a trivialization
$$Y\times \mathrm{PSL}_2(\mathbb C) \to J^\ast_2(Y,\mathbb{C P}_1),\quad 
\left(y,w, \begin{bmatrix}
a &  b \\ c & d 
\end{bmatrix} \right) \,\mapsto\, (y,w,t,\dot t, \ddot t)
$$
where $t = -b/a$, $\dot t = 1/a^2$, $\ddot t = -2c/a^ 3$, or equivalently $y' = a^2$, $y'' = 2ca^3.$ We see, by direct substitution, that the linear matrix differential equation in $Y\times \mathrm{PSL}_2(\mathbb C)$ is
\begin{equation}\label{eq:linearized}
\frac{dU}{dy} = A(y,w)U\quad\mbox{where}\quad   A(y,w) = \begin{bmatrix} 0 & 1 \\ R(y,w) & 0\end{bmatrix}
\end{equation}
and $R(y,w)$ is a rational function on $Y$ equivalent to
$$S_y(t)- 2R(y,w) = 0,\quad S_t(y) + 2R(y,w)y'^2=0$$ 
in the corresponding systems of coordinates on $J^\ast_2(Y,\mathbb C)$. Let us define the matrix-valued rational $1$-form on $J^\ast_2(Y,\mathbb{C P}_1)$:
$$\Omega = U^{-1}dU - U^{-1}A(y,w)Udy.$$
This $1$-form $\Omega$ is the \emph{connection form} of the differential equation. It has the following properties (some of which were already pointed out in Subsection \ref{CartanForm}):
\begin{enumerate}
\item[(a)] The kernel of $\Omega$ is the ${\rm PSL}_2(\mathbb C)$-connection $\mathcal F$ tangent to the graphs of solutions of the Schwarzian differential equation.
\item[(b)] $\Omega$ takes values in $\mathfrak{sl}_2(\mathbb C)$.
\item[(c)] If $X$ is the infinitesimal generator of a monoparametric group of right translations $\{R_{\exp(\varepsilon B)}\,\colon\,\varepsilon \in \mathbb C\}$
for certain $B\in\mathfrak{sl}_2(\mathbb C)$ then $\Omega(X) = B$.
\item[(d)] $\Omega$ is ${\rm adj}$-equivariant $R_g^*(\Omega) = {\rm Adj}_{g}^{-1}\circ \Omega$.
\item[(e)] $d\Omega + \frac{1}{2}[\Omega,\Omega] = 0$. 
\end{enumerate}

 \subsection{Algebraic relations between solutions of $n$ Schwarzian equations}
 
 Let us consider $Y_1,\ldots,Y_n$ affine algebraic curves, and for each $Y_i$ the bundle $J_i = J_2^*(\mathbb{CP}_1,Y_i)$. The product $\tilde J = J_1\times \ldots \times J_n$ is a $({\rm PSL}_2(\mathbb C))^n$-bundle over the product $\tilde Y = Y_1\times \ldots \times Y_n$. Let us consider $n$ Schwarzian equations,
 $$S_{t_i}y_i + 2R_i(y_i,w_i)y_i'^2 = 0.$$
 Each one is seen as a ${\rm PSL}_2(\mathbb C)$-invariant connection $\nabla_i$ in $J_i$ over $Y_i$ with connection form $\Omega_i$. We consider $\tilde{\nabla}$ the product $\tilde{\nabla} = \nabla_1 \times \ldots \times \nabla_n$ which is a $({\rm PSL}_2(\mathbb C))^n$ invariant connection on $\tilde J$ over $\tilde Y$.
 
 We are now ready to state the relevant version of Theorem \ref{Binary}. Notice that, since we are working on curves, the factors $Y_i$ have no positive dimensional special subvarieties.
 
\begin{thm}\label{MainSL2}
Let $(Y_i, \nabla_i)$ be algebraic curves with simple $({\rm PSL}_2(\mathbb C), \mathbb{CP}_1)$-structures. Assume that $$ \hat{\mathcal V}\colon {\rm Spf}\mathbb C[[s_1,\ldots,s_k]]\to \tilde J$$
is a component-wise non constant formal parameterized space in a horizontal leaf of $\tilde \nabla$, and let $V$ be the Zariski closure of $\hat{\mathcal V}$. The following are equivalent:
\begin{itemize}
    \item[(a)] $\dim V < 3n + {\rm rank}(\ker \Omega|_V)$.
    \item[(b)] There are two different indices $1\leq i<j \leq n$, a curve $X_{ij} \subset Y_i \times Y_j$ with both projections, $\pi_i$ and $\pi_j$ dominant such that
    \begin{itemize}
        \item $pr_{ij}(V) \subset X_{ij}$.
        \item there is a ${\rm PSL}_2(\mathbb C)$-bundle isomorphism between $\pi_i^*(J_i)$ and $\pi_j^*(J_j)$ on $X_{ij}$ such that $f_i^*\Omega_i = f_j^*\Omega_j$.
    \end{itemize}
\end{itemize} 
\end{thm}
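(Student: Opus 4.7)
The proof proceeds by specializing Theorem~\ref{Binary} to $G_i = {\rm PSL}_2(\mathbb C)$. Two structural simplifications occur in this specialization: first, ${\rm PSL}_2(\mathbb C)$ has trivial center, so the bundles $\bar{\mathscr C}_i$ appearing in Theorem~\ref{Binary}(b) coincide with $\mathscr C_i$ (which we identify here with $J_i$) and the bundle isomorphism lives directly between $\pi_i^* J_i$ and $\pi_j^* J_j$; second, since $\dim Y_i = 1$, the finitely-many-$Z$-special-subvarieties hypothesis in Theorem~\ref{Binary}(c) is automatic, so the curve $X_{ij}$ is a genuine correspondence with dominant projections to both factors.

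For the direction (a) $\Rightarrow$ (b), note that the inequality in (a) rewrites as ${\rm rank}(\Omega|_V) < 3n = \sum_i \dim G_i$, which is precisely the content extracted by Lemma~\ref{rank} from the hypothesis of Theorem~\ref{Binary}. After discarding the factors on which the corresponding component of $\hat{\mathcal V}$ is constant (such factors contribute trivially to the correspondence condition, and the non-componentwise-constant hypothesis guarantees at least one non-trivial component remains on each retained factor), we may assume each projection of $V$ to $\mathbb{CP}_1$ is dominant. Theorem~\ref{Binary} then produces indices $i<j$, the desired curve $X_{ij} \subset Y_i \times Y_j$ with $pr_{ij}(V) \subset X_{ij}$ and both projections dominant, together with a ${\rm PSL}_2(\mathbb C)$-bundle isomorphism $\pi_i^* J_i \to \pi_j^* J_j$ intertwining $\pi_i^* \Omega_i$ and $\pi_j^* \Omega_j$.

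For the converse (b) $\Rightarrow$ (a), the bundle isomorphism $f$ with $f^*(\pi_j^*\Omega_j) = \pi_i^*\Omega_i$, together with $pr_{ij}(V) \subset X_{ij}$, forces the image of $V$ in $(J_i \times J_j)|_{X_{ij}}$ to lie in the graph of $f$. For any tangent vector $v \in T_p V$ with $(i,j)$-components $(v_i, v_j)$, the graph-of-$f$ condition yields $v_j = df(v_i)$ modulo base directions, hence $\Omega_j(v_j) = f^*\Omega_j(v_i) = \Omega_i(v_i)$. Therefore the image of $\Omega|_V$ lies in the subspace of $\mathfrak{sl}_2(\mathbb C)^n$ on which the $i$-th and $j$-th components coincide, giving ${\rm rank}(\Omega|_V) \leq 3(n-2) + 3 = 3n - 3 < 3n$, which is equivalent to the inequality of (a) after using the generic identity ${\rm rank}(\Omega|_V) = \dim V - {\rm rank}(\ker\Omega|_V)$.

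The main obstacle is the bookkeeping of the non-componentwise-constant hypothesis in the (a) $\Rightarrow$ (b) direction: one must rigorously reduce to the case where every factor projection is dominant (a standing hypothesis of Theorem~\ref{Binary}), while keeping track of how the dimension count behaves when some factors are dropped. Technical checks of lesser difficulty include the equality ${\rm rank}(\Omega|_V) = \dim V - {\rm rank}(\ker\Omega|_V)$ at generic points of $V$, which follows from $V$ being irreducible together with constancy of the generic rank of the rational map $\Omega|_V$.
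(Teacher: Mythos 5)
The paper does not present an explicit proof of Theorem~\ref{MainSL2}; it states it as ``the relevant version of Theorem~\ref{Binary}'' and moves on. Your approach---specializing Theorem~\ref{Binary} to $G_i = {\rm PSL}_2(\mathbb C)$, using the trivial center and the fact that curves have no positive-dimensional special subvarieties---is exactly the implicit route the paper takes for (a)~$\Rightarrow$~(b), and your translation of the dimension hypothesis via rank-nullity (${\rm rank}(\Omega|_V) < 3n$) is correct and matches Lemma~\ref{rank}. Your supplying the converse (b)~$\Rightarrow$~(a), which the paper leaves entirely to the reader, is a genuine addition.

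Two remarks. First, in (b)~$\Rightarrow$~(a) you assert that $pr_{ij}(V)$ lies in the graph of $f$; this does not follow directly from (b). The graph of $f$ is a $(\nabla_i\times\nabla_j)|_{X_{ij}}$-invariant subvariety, so the Zariski closure of the relevant horizontal leaf lies in some $G$-\emph{translate} $R_g\circ f$ of the graph, not necessarily in the graph itself. Replacing $f$ by $R_g\circ f$ changes the compatibility to $\Omega_j(v_j) = {\rm Adj}_{g^{-1}}\Omega_i(v_i)$, and since ${\rm Adj}_{g^{-1}}$ is a linear isomorphism of $\mathfrak{sl}_2(\mathbb C)$, the image of $\Omega|_V$ still lies in a codimension-$3$ subspace of $\mathfrak{sl}_2(\mathbb C)^n$, so the estimate ${\rm rank}(\Omega|_V) \leq 3n-3$ survives. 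You should make this adjustment explicit.

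Second, the ``discarding constant factors'' step is unnecessary and slightly garbled. The hypothesis ``non componentwise constant'' in the paper means each component $\hat{\mathcal V}_i$ is non-constant (compare Theorems~\ref{thmd} and~\ref{thmC}), and since each horizontal leaf of $\nabla_i$ projects isomorphically to an open subset of $\mathbb{CP}_1$, non-constancy of $\hat{\mathcal V}_i$ already forces dominance of the projection $V \to \mathbb{CP}_1$ for every factor, i.e.\ hypothesis~(4) of Theorem~\ref{Binary} holds outright. Moreover, dropping a constant factor would lower $3n$ by $3$ without lowering $\dim V$, so the inequality in~(a) is not preserved under your discarding operation; it is fortunate that the step is not needed.
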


In particular, under the hypothesis of Theorem \ref{MainSL2}, the coordinates $y_i$ and $y_j$ are algebraically dependent on $X_{ij}$. In other words it follows that if $\upsilon_i$ and $\upsilon_j$ are solutions of the corresponding equations then $\upsilon_i(t_i(s))$ and $\upsilon_j(t_j(s))$ are algebraically dependent over $\mathbb C$. 

Note that the rank of $\ker\Omega|_V$ is at least the dimension of the smallest analytic subvariety containing $\hat{\mathcal V}$ and thus is greater or equal to the rank of the Jacobian of our $n$ formal power series in $k$ variables. A more precise description of the possible dimension of a non-trivial intersection of $V$ with a leaf is:

\begin{cor}\label{dimensions}
Let $(Y_1, \nabla_1)$, $(Y_2, \nabla_2)$ be algebraic curves with simple $({\rm PSL}_2(\mathbb C), \mathbb{CP}_1)$-structures.
Consider an horizontal leaf $\mathcal L = \mathcal L_1 \times \mathcal L_2 \subset J_1 \times J_2$ and an algebraic subvariety $V \subset J_1 \times J_2$. If $\mathcal V$ is an irreducible component of  $V \cap \mathcal L$  and $\bar{\mathcal V}$ its Zariski closure, then $\dim \bar{\mathcal V}$ is $0$, $4$, $7$ or $8$. 
\end{cor}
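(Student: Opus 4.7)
The plan is to split by $\dim \mathcal V \in \{0,1,2\}$ (since $\dim \mathcal L = \dim Y_1 + \dim Y_2 = 2$) and set $W := \bar{\mathcal V}$, $d := \dim W$. When $\dim \mathcal V = 0$, $W$ is a point and $d = 0$. When $\dim \mathcal V = 2$, the set $\mathcal V$ is open in $\mathcal L$ and hence Zariski-dense in $\bar{\mathcal L}$, so $W = \bar{\mathcal L}$; since each $(Y_i,\nabla_i)$ is simple and (as implicit in the corollary) the Galois group of the product $\tilde\nabla$ is the full $\mathrm{PSL}_2(\mathbb C) \times \mathrm{PSL}_2(\mathbb C)$, the minimal $\tilde\nabla$-invariant subvariety is all of $J_1 \times J_2$ and $d = 8$.

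Now suppose $\dim \mathcal V = 1$. If one of the projections $\pi_i \colon \mathcal V \to \mathcal L_i$ is constant---say $\pi_1(\mathcal V) = \{p_1\}$---then Zariski-density of $\mathcal V$ in $W$ together with $\mathcal V \subset \pi_1^{-1}(p_1)$ forces $W \subset \pi_1^{-1}(p_1) = \{p_1\} \times J_2$. The $1$-dimensional analytic $\mathcal V$ is then an open piece of the $1$-dimensional $\{p_1\} \times \mathcal L_2$, and simplicity of $(Y_2,\nabla_2)$ gives $\overline{\mathcal L_2} = J_2$, whence $W = \{p_1\} \times J_2$ and $d = 4$; the situation with $\pi_2|_{\mathcal V}$ constant is symmetric.

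In the remaining subcase both $\pi_i|_{\mathcal V}$ are non-constant, so a formal parametrization of $\mathcal V$ meets the hypotheses of Theorem~\ref{MainSL2}. Let $r$ be the generic rank of $\ker \Omega|_W$; one has $r \geq 1$ because $T\mathcal V \subset \ker \Omega$ along $\mathcal V$. If $r = 2$, then at a generic point the full $2$-dimensional horizontal distribution lies in $TW$, so the leaf through $\mathcal V$ is contained in $W$ and $W = \bar{\mathcal L} = J_1 \times J_2$, yielding $d = 8$. If $r = 1$ and condition (a) of Theorem~\ref{MainSL2} fails, then $d \geq 6 + r = 7$; the possibility $d = 8$ is ruled out because $W = J_1 \times J_2$ would force $r = 2$, so $d = 7$. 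Otherwise condition (b) supplies a proper correspondence curve $X_{12} \subset Y_1 \times Y_2$ with $\mathrm{pr}_{12}(W) \subset X_{12}$ together with a connection-preserving bundle isomorphism $\varphi \colon \pi_1^* J_1|_{X_{12}} \to \pi_2^* J_2|_{X_{12}}$. Setting $FP := \mathrm{pr}_{12}^{-1}(X_{12})$, which has dimension $1 + 6 = 7$, the product connection restricts to a principal $(\mathrm{PSL}_2)^2$-connection on $FP \to X_{12}$ whose Galois group is reduced by $\varphi$ to the graph $\Gamma_\varphi \cong \mathrm{PSL}_2$; its minimal invariant subvarieties are the $\Gamma_\varphi$-orbit bundles over $X_{12}$, each of dimension $1 + 3 = 4$. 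Since $\mathcal V$ is a leaf of this restricted connection and $W$ is its Zariski closure, $W$ must coincide with one such orbit bundle, giving $d = 4$. Collecting all cases yields $d \in \{0, 4, 7, 8\}$. The main obstacle I anticipate is precisely this last identification: carefully showing that $W$ is a leaf closure of the restricted connection on $FP$ (and not merely contained in one) so as to rule out the intermediate dimensions $5$ and $6$ that Theorem~\ref{MainSL2}(a) would allow a priori.
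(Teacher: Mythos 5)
Your proof is correct and uses the same toolkit as the paper's: Lemma \ref{rank} to pin down $r := \mathrm{rank}(\ker\Omega|_W)$, Lemma \ref{lemma:rk2} to identify the rank-$2$ case with $W = J_1\times J_2$, and Theorem \ref{MainSL2} for the intermediate range. The paper's own proof is organized by the value of $r$ rather than by $\dim\mathcal V$, but the case split comes to the same thing. Where you are more careful than the paper is in two places. First, you isolate the subcase in which one of the two projections of $\mathcal V$ is constant; Theorem \ref{MainSL2} does not cover this (its hypothesis requires a \emph{non componentwise constant} parametrization), and the paper passes over it silently, though the resulting dimension $4 = 0 + \dim J_2$ happens to coincide with the gauge-correspondence case. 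Second, you explicitly justify why the gauge-correspondence case forces $\dim W = 4$ rather than a value in $\{5,6\}$ that condition (a) alone would not exclude: since $\dim\mathcal V = 1$ equals the rank of the horizontal distribution of the restricted connection over the curve $X_{12}$, $\mathcal V$ is an open piece of a horizontal leaf of $\tilde\nabla|_{\mathrm{pr}_{12}^{-1}(X_{12})}$, so $W = \bar{\mathcal V}$ is precisely the Zariski closure of that leaf, a principal $G_\sigma$-bundle over $X_{12}$ of dimension $1+3 = 4$; that the Galois group of the restricted connection is exactly the graph $G_\sigma$ is what the proof of Theorem \ref{Binary}(b) establishes. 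The worry you flag in your final sentence is therefore already settled by the argument you give: because $\mathcal V$ has the same dimension as the leaves of the restricted connection, it is not merely contained in one but coincides locally with one, so its Zariski closure is a minimal invariant subvariety and no intermediate dimension is possible. This is exactly the content that the paper compresses into the single assertion that ``$V$ is the graph of a gauge correspondence.''
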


\begin{proof}
Let us assume $V = \bar{\mathcal V}$. 

{ If the projection of $\mathcal V$ on a factor, say $Y_1$, is not dominant then its image is a point and its projection in $J_2$ is also a point. Then $V$ has dimension less than or equal to $4$. Its projection on the second factor gives a $\nabla_2$-invariant algebraic subset thus of dimension $0$ or $4$. }

Assume both projections are dominant. By lemma \ref{rank}, the rank of $\ker \Omega|_V$ can be $0$,$1$ or $2$. If it is $0$ then $\mathcal V$ is a point. From the above theorem, if $0<\dim V< 7$, then $V$ is the graph of a gauge correspondence between $\nabla_1$ and $\nabla_2$ thus $\dim V = \dim J_1 = \dim J_2 = 4$. The hypothesis that the structure is simple implies that the dimension cannot be smaller.
\end{proof}

\section{Proof of the Ax-Schanuel Theorem for products of curves.} \label{prodsofcurves}
In this section, we prove a general Ax-Schanuel type theorem for curves with geometric structure. Using this, we obtain two instances of the \emph{Ax-Schanuel Theorem with derivatives with a full characterization of algebraic relations}\footnote{For the remainder we will just say the \emph{full Ax-Schanuel.}}. Namely, we establish the case of products of hyperbolic curves (a direct generalization of the work \cite{AXS} and \cite{ASmpt} in the case of curves) and the case of non-hyperbolic curves given by generic triangle groups.

Let us carefully state the general problem for a Schwarzian equation. Let $\upsilon$ denote a solution of a Schwarzian equation
\begin{equation}\tag{$\star$} \label{stareqn}
S_t(y) + 2R(y,w)y'^2=0
\end{equation}
attached to a complex affine algebraic curve $Y$, and such that ${\rm Gal}(\nabla)={\rm PSL}_2(\mathbb{C})$. Notice that we take $\upsilon:U\rightarrow Y$ to be a uniformization function as defined in Subsection \ref{sectionUniformization}. {Let $\hat t_1,\ldots,\hat t_n$ be formal parameterizations of neighborhoods of points $p_1, \ldots, p_n$ in $U$, i.e., $\hat t_i \in p_i + (\mathfrak m \setminus \{0\})$ with $\mathfrak m$ the maximal ideal in $\mathbb C[[s_1, \ldots, s_\ell]]$. The Ax-Schanuel Theorem with derivatives with a full characterization of algebraic relations is an answer to the following problem.
\begin{prob} (Full Ax-Schanuel)
Fully characterize the conditions on $\hat t_1,\ldots,\hat t_n$ for which
$${\rm tr.deg.}_{\mathbb{C}}\mathbb{C}(\hat t_1,\upsilon(\hat t_1),\upsilon'(\hat t_1),\upsilon''(\hat t_1),\ldots,\hat t_n,\upsilon(\hat t_n),\upsilon'(\hat t_n),\upsilon''(\hat t_n))< 3n+{\rm rank}\left(\frac{\partial\hat t_i}{\partial s_j}\right).$$
\end{prob}}

Using Theorem \ref{MainSL2} (which holds since ${\rm Gal}(\nabla)={\rm PSL}_2(\mathbb{C})$) with $Y=Y_i$ and $\upsilon=\upsilon_i$ for $i=1,\ldots,n$, to prove the full Ax-Schanuel Theorem we only need to characterize the conditions on pairs $\hat t_1,\hat t_2$. Moreover, from Corollary \ref{dimensions}, if $\hat t_1,\hat t_2$ are nonconstant and 
$${\rm tr.deg.}_{\mathbb C} \left( \mathbb C (\hat t_1,\hat t_2, \upsilon(\hat t_1), \upsilon'(\hat t_1), \upsilon''(\hat t_1), \upsilon(\hat t_2), \upsilon'(\hat t_2), \upsilon''(\hat t_2)\right)=\ell < 7,$$ 
then it must be that $\ell=4$ and there is a polynomial $P \in \mathbb C[x,y]$ such that $P(\upsilon(\hat t_1), \upsilon(\hat t_2))=0.$ {In order to obtain the full Ax-Schanuel theorem, we will need to carefully study the way in which the polynomial $P$ is obtained in Theorem \ref{MainSL2}. To simplify notation, we write $t_i$ instead of $\hat t_i$. As one can expect, the relation $P(\upsilon(t_1), \upsilon(t_2))=0$ is not the only algebraic relations on the projective structure. The others are given by rational gauge transformations on the curve $X = \{P=0\} \subset Y \times Y$ transforming the connection  $\nabla_1 = {\rm d} - \begin{bmatrix}0& 1 \\ R(y_1) & 0 \end{bmatrix} {\rm d}y_1$ into the connection   $\nabla_2 = {\rm d} - \begin{bmatrix}0& 1 \\ R(y_2) & 0 \end{bmatrix} {\rm d}y_2$. Given a fundamental matrix $\Phi(y_1)$ for the first connection,
$$\Phi(y_1)  =\begin{bmatrix} \Phi_1(y_1)& \Phi_2(y_1)\\ \Phi_1'(y_1) & \Phi_2'(y_1) \end{bmatrix},$$ 
the gauge transformation is given by matrix of rational functions on $X$   
$$A(y_1, y_2) =\begin{bmatrix} a(y_1, y_2)& b(y_1,y_2) \\ c(y_1,y_2) & d(y_1,y_2) \end{bmatrix}$$
and a constant matrix $C = \begin{bmatrix} c_{11} & c_{12}\\c_{21} & c_{22} \end{bmatrix}\in\text{GL}_{2}(\mathbb{C})$ such that $$\Phi(y_2)C = A(y_1, y_2)\Phi(y_1),$$ where $\Phi(y_2)$ is a fundamental matrix for the second connection. 

Writing this transformation in the coordinates $y_1, y_2, t_1 = \frac{\Phi_1(y_1)}{\Phi_2(y_1)}, t_2 = \frac{\Phi_1(y_2)}{\Phi_2(y_2)}, y_1', \ldots, y_2''$, one obtains among all the algebraic relations, the following expression for $t_2$

$$
\frac{c_{11} t_2 + c_{21}}{c_{12} t_2 + c_{22}} = t_1 + \frac{\frac{b}{a}\frac{1}{y_2'}} {1+\frac{b}{2a}\frac{y_2''}{y_2'^2}}.
$$
In particular, if $b\equiv 0$ we have that $t_1=ht_2$ for an homography $h \in {\rm PSL}_2(\mathbb C)$. This in turn means that the two pull-backs of the $({\rm PSL}_2(\mathbb C), \mathbb{C P}_1)$ structure on $Y$ by the two projections of $X$ onto $Y$ will give the same projective structure on $X$. On the other hand if $b\not\equiv0$, then the projective structures on $X$ will be different.

\begin{defn}
A $({\rm PSL}_2(\mathbb C), \mathbb{C P}_1)$ structure on a curve $Y$, with uniformizing equation $(\star)$  is said to be {\em gauge invariant} if for any curve $X\subset Y\times Y$ and any rational gauge transformations on the curve $X$, the two pull-backs of the $({\rm PSL}_2(\mathbb C), \mathbb{C P}_1)$ structure on $Y$ by the two projections of $X$ onto $Y$ give the same projective structure on $X$.
\end{defn}

Given the above discussion, we have the following:}

\begin{prop}
\label{Ax-Schanuel-curve} Let $\upsilon$ be uniformization of a simple $({\rm PSL}_2(\mathbb C), \mathbb{C P}_1)$ structure on $Y$ which is gauge invariant. Let $\hat t_1$, $\hat t_2$ be two non-constant power series in $\mathbb C[[s]]$. If
$${\rm tr.deg.}_{\mathbb C} \left( \mathbb C ( \hat t_1,\hat t_2, \upsilon(\hat t_1), \upsilon'(\hat t_1), \upsilon''(\hat t_1), \upsilon(\hat t_2), \upsilon'( \hat t_2), \upsilon''(\hat t_2)\right)=4 $$
then there exist a polynomial $P\in \mathbb C[y_1, y_2]$ and an homography $h \in {\rm PSL}_2(\mathbb C)$
such that $\hat t_2 = h\hat t_1$ and $P(\upsilon(\hat t_1), \upsilon(\hat t_2))=0$.
\end{prop}

It is worth noting that another consequence of the conclusion of Proposition \ref{Ax-Schanuel-curve} is that if $\ell=4$, then the two functions $\upsilon(\hat t_1)$ and $\upsilon(\hat t_2)$ are solutions to the same {Schwarzian} equation with the derivation $\delta_1$. Hence, given Propositon \ref{Ax-Schanuel-curve}, the full Ax-Schanuel Theorem follows as soon as one is able to obtain a complete description of the structure of the set of solutions, in a differentially closed field, of the zero fiber equation. We will next look at two instances where we have been able to establish this.

\subsection{The case of hyperbolic curves} 

We assume in this subsection that $Y$ has negative Euler characteristic, $e(Y) <0$, and the projective structure has regular singularities. More precisely, let $\hat{Y}$ be a smooth projective completion of the affine curve $Y$. The $({\rm PSL}_2(\mathbb C), \mathbb{C P}_1)$ structure considered will be regular on $Y$ with regular singularities at  $p \in \hat Y \setminus Y$,meaning that  the singularity is given in local coordinate as $R(p+y) = \frac{1}{2} \frac{1-\mu^2}{y^2} + \frac{h(y)}{y}$, where $h(y)$ is holomorphic,  $\mu$ is the local exponent at the singular point. If $\mu =0$, the singularity is a cusp with a local solution $t(y) = \log(y) + h(y)$ where $h(y)$ is holomorphic. Furthermore, if $\mu \not \in \mathbb Z$, a local solution is of the form $t(y) = y^\mu h(y)$ where $h(y)$ is holomorphic. A regular point has local exponent $\mu= 1$. A singular point $p$ with local solution $t(y) = y^\mu h(y)$ with $h(y)$ holomorphic, and $\mu \in \mathbb N$, is a branching point of multiplicity $\mu-1$.

\begin{rem}
As the solutions of the Schwarzian equation, $S_y(t) = R(y)$, are quotients of solutions of the associated linear system, 
the exponent of the Schwarzian equation is the difference of the local exponents of the associated linear system.

\end{rem}

\begin{prop} \label{general curves}
Consider a regular singular projective structure on a curve $Y$ with $e(Y)<0$, such that for any two different non cuspidal singularities in $\hat Y \setminus Y$ with exponents $\mu_1$ and $\mu_2$, we have that $\mu_1$, $\mu_2$ and $1$ are linearly independent over $\mathbb Q$. Then the projective structure is gauge invariant.
\end{prop}

The main point in the following proof comes from \cite[Theorem 4.1]{BaldassarriDworkTovena}, which  proves that projective structures with small number of regular singularities are characterized by their monodromy representation up to conjugation. 
\begin{proof}
Let $X \subset Y\times Y$ be a curve and consider the two  projective structures on $X$ obtained by the pull-back of the structure on $Y$ by the first and by the second projection. Assume that there exists a rational gauge transformation between these two projective structures.

Choose $\hat X$ a projective normalization of $X$. {\it A priori} the coefficient of the gauge transformation $b^2$ on $\hat X$, is defined on an open subset where the coordinates $y_1$ and $y_2$ are \'etale. Following \cite{BaldassarriDworkTovena}, these $b^2$'s glue together in a rational section of $T \hat X ^{\otimes 2}$ (still denoted by $b^2$) and then $\sum_{x\in \hat X} ord_x(b^2) = 2 e(\hat X)$ or $b$ is zero. Here $\chi$ is the Euler characteristic. Then \cite[Theorem 4.1]{BaldassarriDworkTovena} bounds the vanishing order of $b^2$ by the singularities data. The argument of the proof of the proposition is as follows.

Both projections of $\hat X$ on $\hat Y$ are ramified coverings. Consider the two projective structures on $\hat X$ obtained by pull-back of the projective structure of $\hat Y$ by the two projections. For such a projective structure, the ramification points $p \in \hat X$ of the projection  will be singular points with exponent $n\mu$ where $n$ is the ramification order and $\mu$ is the exponent of the projection of $p$ in $\hat Y$.


As the monodromy around singularities of both projective structures is the same, their exponents at a singular point must differ by an integer. As two non-cuspidal exponents at singular points in $\hat Y$ together with $1$ are $\mathbb Q$ linearly independent, the exponents of the two projective structures at the same point $p \in \hat X$ must be equal and its projections on $\hat Y$ by the two projections coincide.

Let $b_1$ and $b_2$ be the number of branch points (with multiplicities) of the respective projective structures, $\ell_1$ and $\ell_2$ be the number of nonapparent singularities, and $m_1$, $m_2$ the degrees of the projections. By the Riemann-Hurwitz formula $b_i  = \ell_i + m_i e(Y) - e(\hat X)$. In particular $\ell_1 + \ell_2 -b_1 -b_2 = 2 e(\hat X) - (m_1+m_2)e(Y) > 2 e(\hat X)$. 

Theorem 4.1 from \cite{BaldassarriDworkTovena} applied to this particular case, gives that if the projective structures are different and have the same monodromy,  then $\ell_1 + \ell_2 -b_1 -b_2 \leq 2 e(\hat X)$. So the two projective structures must be the same, that is, the projective structure is gauge invariant.
\end{proof}

Let $\Gamma\subset {\rm PSL}_2(\mathbb{R})$ be a Fuchsian group of the first kind and $Y = \mathbb{H}/\Gamma$. If $C_{\Gamma}$ denotes the set of cusps of $\Gamma$ and $\mathbb{H}_{\Gamma}:=\mathbb{H}\cup C_{\Gamma}$, then there is a meromorphic mapping $j_{\Gamma}:\mathbb{H}_{\Gamma}\rightarrow \hat{Y}(\mathbb{C})$. The map $j_{\Gamma}$ is called a {\it uniformizer} and is an automorphic function for $\Gamma$
\[j_{\Gamma}(g t)=j_{\Gamma}(t)\;\;\;\text{ for all }\;g\in\Gamma\text{ and } t \in\mathbb{H}\]
and so factorizes in a bi-rational isomorphism of $\Gamma\backslash \mathbb{H}_{\Gamma}$ into $\hat{Y}(\mathbb{C})$. We have that $j_{\Gamma}$ is a solution of the Schwarzian equation $(\star)$ for some rational function $R$ on $Y$ with regular singularities. Note that in this case we have $e(Y)<0$ and then {\it a fortiori} the Euler characteristic of the regular locus is negative.

\begin{thm}[\bf Theorem D]\label{AxSchanuelHyperbolic}
Let $\hat t_1,\ldots,\hat t_n \in \mathbb C[[s]]\setminus \mathbb C$ be formal parameterizations of neighborhoods of points $p_1,\ldots, p_n$ in $\m H$. Assume that $\hat t_1,\ldots,\hat t_n$ are geodesically independent, namely $\hat t_i$ is nonconstant for $i =1, \ldots , n$ and there are no relations of the form $\hat t_i = \gamma\hat t_j$ for $i \neq j$, $i,j \in \{1, \ldots , n \}$ and $\gamma$ is an element of  $\text{Comm}(\Gamma)$, the commensurator of $\Gamma$. Then
$${\rm tr.deg.}_{\mathbb{C}}\mathbb{C}(\hat t_1,j_{\Gamma}(\hat t_1),j'_{\Gamma}(\hat t_1),j''_{\Gamma}(\hat t_1),\ldots,\hat t_n,j_{\Gamma}(\hat t_n),j'_{\Gamma}(\hat t_n),j''_{\Gamma}(\hat t_n))\geq 3n+{\rm rank}\left(\frac{\partial \hat t_j}{\partial s_i}\right).$$
\end{thm}

\begin{proof}

For notational simplicity we write $t_i$ instead of $\hat t_i$. Assume that the inequality does not hold and that none of the $t_i$'s are constants. Then as discussed above, using Theorem \ref{MainSL2} and Corollary \ref{dimensions} we have a couple of indices, say 1 and 2, such that 
$${\rm tr.deg.}_{\mathbb C} \left( \mathbb C ( t_1, t_2, j_\Gamma(t_1), j_\Gamma'(t_1), j_\Gamma''(t_1), j_\Gamma(t_2), j_\Gamma'(t_2), j_\Gamma''(t_2)\right)=4.$$

If the projective structure is simple and gauge invariant, then we can apply Theorem \ref{Ax-Schanuel-curve} to get that $t_2 = ht_1$ for some homography $h \in {\rm PSL}_2(\mathbb C)$. So the two functions $j_\Gamma(t_1)$ and $j_\Gamma(t_2)$ are solutions to the same zero fiber equation ($\star$). Using Theorem \ref{Urank1}, we have that the set defined by this equation, in a differentially closed field, is strongly minimal and satisfies the refined version of geometric triviality. We also have that the weak form of the Ax-Lindemann-Weierstrass Theorem given in Corollary \ref{independence} holds. In particular $h\in\text{Comm}(\Gamma)$ and we are done.

Let us verify that the projective structures given by Fuchsian uniformizations are simple and gauge invariant. Using \cite[Corollary B.1]{Morales}, we have that $\Gamma$ is Zariski dense in ${\rm Gal}(\nabla)$, for the corresponding connection $\nabla$. So ${\rm Gal}(\nabla)=G={\rm PSL}_2(\mathbb{C})$ and the structure is simple. 

If $\Gamma$ has no torsion, then all the singularities of the projective structure are cusps. So Proposition \ref{general curves} applies and we are done.

Now if $\Gamma$ has a finite order element, by Selberg's Lemma there exists a finite index subgroup $\Gamma_0< \Gamma$ such that $\Gamma_0$ has no torsion. Let $Y_0$ be the quotient $\Gamma_0\backslash \mathbb{H}$. The curve $\hat Y_0$ is a ramified covering of $\hat Y$. If $X_0$ is an irreducible component of the pull-back of $X$ in $Y_0 \times Y_0$, then Proposition \ref{general curves} can be applied and the two pull-back of the projective structure of $\hat Y$ on $\hat X_0$ coincide. This implies that the intermediate pullbacks on $\hat X$ coincide and so the projective structures are the same.
\end{proof}

We hence also answer positively a question of Aslanyan \cite[Section 3.4]{Aslanyan} about the existence of differential equations satisfying the full Ax-Schanuel Theorem and such that the polynomial ($X_1-X_2$) is the only $\Gamma$-special polynomial. Indeed, this is true of any $\Gamma$ satisfying $\Gamma=\text{Comm}(\Gamma)$. Many such examples exist (cf. \cite[Fact 4.9]{BCFN}). We next give some more examples.

\subsection{The case of generic triangle}

Let $\triangle\subset\m H$ be an open circular triangle with vertices $v_{1},v_{2},v_{3}$ and with respective internal angles $\frac{\pi}{\alpha}$, $\frac{\pi}{\beta}$ and $\frac{\pi}{\gamma}$. Using the Riemann mapping theorem, we have a unique biholomorphic mapping $J:\triangle\rightarrow \mathbb{H}$ sending the vertices $v_{1},v_{2},v_{3}$ to $\infty,0,1$ respectively. We can also extend $J(t)$ to a homeomorphism from the closure of $\triangle$ onto $\overline{\mathbb{H}}=\mathbb{H}\cup{\bf P}^1(\mathbb{R})$. The function $J(t)$ is called a {\em Schwarz triangle function} and is a uniformization function in the sense of Subsection \ref{sectionUniformization}. It satisfies the Schwarzian equation ($\star$) with
\[R(y,w)=R_{\alpha,\beta,\gamma}(y)=\frac{1}{2}\left(\frac{1-{\beta}^{-2}}{y^2}+\frac{1-{\gamma}^{-2}}{(y-1)^2}+\frac{{\beta}^{-2}+{\gamma}^{-2}-{\alpha}^{-2}-1}{y(y-1)}\right).\]
We call such an equation a Schwarzian triangle equation and write it as $\chi _{\alpha,\beta,\gamma, \frac{d}{dt}} (y)=0.$ By a generic such equation we mean the case when $\alpha,\beta,\gamma\in\mathbb{R}_{>1}$ are algebraically independent over $\mathbb{Q}$.

Let us now state what is known about generic Schwarzian triangle equations. The proof can be found in \cite[Section 4]{BCFN}. 
\begin{fct}\label{BCFN-main} Let $\alpha,\beta,\gamma\in\mathbb{R}$ be algebraically independent over $\mathbb{Q}$. Then
\begin{enumerate}
    \item The Galois group ${\rm Gal}(\nabla)$ for the corresponding connection $\nabla$ is ${\rm PSL}_2(\mathbb{C})$.
    \item The set defined by $\chi _{\alpha,\beta,\gamma, \frac{d}{dt}} (y)=0$ is strongly minimal, geometrically trivial and strictly disintegrated. Namely, if $K$ is any differential field extension of $\mathbb{Q}(\alpha,\beta,\gamma)$ and $y_1,\ldots,y_n$ are distinct solutions that are not algebraic over $K$, then $${\rm tr.deg._KK}(y_1,y'_1,y''_1,\ldots,y_n,y'_n,y''_n)=3n.$$
\end{enumerate}
\end{fct}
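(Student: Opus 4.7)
I will establish the three conclusions in order, invoking the general machinery developed in Sections~3--5 together with classical results on hypergeometric equations. For the Galois group computation (1), the idea is to linearize: as in \eqref{eq:linearized}, the Schwarzian equation $\chi_{\alpha,\beta,\gamma,d/dt}(y)=0$ is the projective form of a rank~two linear system with regular singularities at $\{0,1,\infty\}$ and local exponent differences $\alpha^{-1},\beta^{-1},\gamma^{-1}$, i.e.\ a classical hypergeometric equation. The differential Galois groups of such equations have been explicitly classified (Schwarz, Kimura, Beukers--Heckman): the Galois group is a proper subgroup of $\mathrm{SL}_2(\mathbb{C})$ if and only if the local exponents satisfy one of an explicit finite list of algebraic conditions over $\mathbb{Q}$. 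Algebraic independence of $\alpha,\beta,\gamma$ over $\mathbb{Q}$ excludes every such condition, so the Galois group is $\mathrm{SL}_2(\mathbb{C})$, whence $\mathrm{Gal}(\nabla)=\mathrm{PSL}_2(\mathbb{C})$.

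For (2), I would check that $(Y,\mathscr{Y})$ with $Y=\mathbb{CP}_1\setminus\{0,1,\infty\}$ is \emph{simple} in the sense of Definition~\ref{simple}: $G=\mathrm{PSL}_2(\mathbb{C})$ is simple with trivial center $Z=\{e\}$, part (1) supplies the Galois condition, and $\dim Y=1$ makes condition (3) vacuous since a curve has no positive-dimensional $Z$-special subvarieties. Theorem~\ref{Urank1} then yields at once that the zero set of $\chi_{\alpha,\beta,\gamma,d/dt}$ is strongly minimal and geometrically trivial.

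The main obstacle lies in upgrading geometric triviality to \emph{strict} disintegration, i.e.\ ruling out every binary interalgebraicity between distinct non-algebraic solutions. Suppose $y_1,y_2$ were two such solutions with $\mathrm{tr.deg.}_K K\langle y_1,y_2\rangle<6$. Realizing $y_i=J(\hat t_i)$ for a Schwarz triangle function $J$ (which by construction is a uniformization of the structure $\mathscr Y$) and invoking Theorem~\ref{Ax-Schaneul-curve}, we would obtain an $h\in\mathrm{PSL}_2(\mathbb{C})$ with $\hat t_2=h\hat t_1$ together with a polynomial relation $P(y_1,y_2)=0$. This data amounts to a bi-algebraic self-correspondence on $\mathbb{CP}_1\setminus\{0,1,\infty\}$ compatible with the Schwarzian datum $R_{\alpha,\beta,\gamma}$. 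The key technical step -- which I expect to be the hardest -- is to show that existence of such a nontrivial correspondence forces an algebraic relation among $\alpha,\beta,\gamma$: concretely, the local ramification profile of the correspondence above $\{0,1,\infty\}$ must interact with the local monodromy orders $\alpha,\beta,\gamma$ in a way that produces a polynomial identity on these parameters (equivalently, any element of the commensurator of the triangle group $\Delta(\alpha,\beta,\gamma)$ outside $\Delta$ itself would impose such a relation). Algebraic independence of $\alpha,\beta,\gamma$ excludes all such relations, precluding binary interalgebraicity. The full equality $\mathrm{tr.deg.}_K K(y_1,y_1',y_1'',\ldots,y_n,y_n',y_n'')=3n$ for distinct non-algebraic $y_1,\ldots,y_n$ then follows by induction on $n$: at each step $y_{k+1}$ lies over the field generated by $y_1,\ldots,y_k$ with relative transcendence degree either $0$ or $3$ by strong minimality, and the $0$ case is excluded by the binary independence just established together with geometric triviality.
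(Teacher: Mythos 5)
The paper does not prove Fact~\ref{BCFN-main}; it is stated explicitly as a citation, with the text immediately preceding it reading ``The proof can be found in \cite[Section 4]{BCFN}.'' So there is no internal proof to compare against; your task was, in effect, to reconstruct the relevant part of \cite{BCFN}. Your plan for item (1) (linearize to a hypergeometric equation and invoke the Schwarz--Kimura--Beukers--Heckman list of exceptional exponent triples, each of which is an algebraic condition over $\mathbb{Q}$) is the standard and correct route, and your deduction of strong minimality and geometric triviality from Theorem~\ref{Urank1} once simplicity is verified is exactly how the paper organizes the surrounding material (compare the hyperbolic-curve discussion in Section~7.1). Up to that point the sketch is sound and in the same spirit as the paper's framework.

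The genuine gap is in the strict-disintegration step, which you yourself flag as the hard part but then leave unproved. After Theorem~\ref{Ax-Schaneul-curve} reduces a binary interalgebraicity to the existence of $h\in\mathrm{PSL}_2(\mathbb{C})$ with $\hat t_2=h\hat t_1$ and a polynomial relation $P(\upsilon(\hat t_1),\upsilon(\hat t_2))=0$, you must show that no such nontrivial algebraic self-correspondence of $(\mathbb{CP}_1\setminus\{0,1,\infty\},\,R_{\alpha,\beta,\gamma})$ exists when $\alpha,\beta,\gamma$ are algebraically independent; this requires a concrete computation (e.g.\ tracking how the local exponents $\alpha^{-1},\beta^{-1},\gamma^{-1}$ transform under a ramified correspondence and showing compatibility forces a $\mathbb{Q}$-algebraic relation), and it is not supplied. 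Moreover the heuristic you offer -- ``any element of the commensurator of the triangle group $\Delta(\alpha,\beta,\gamma)$ outside $\Delta$ itself would impose such a relation'' -- is not well-posed here: for generic real $\alpha,\beta,\gamma$ the angle parameters do not give rotations of finite order, the group generated by the side reflections is not discrete, and there is no Fuchsian ``triangle group'' whose commensurator one can speak of. Finally, note that the Fact is stated for all $\alpha,\beta,\gamma\in\mathbb{R}$ algebraically independent over $\mathbb{Q}$ (and \cite{BCFN} treats arbitrary complex parameters), whereas your argument via the Schwarz triangle function $J:\triangle\to\mathbb{H}$ only makes sense for $\alpha,\beta,\gamma\in\mathbb{R}_{>1}$; the paper's own remark acknowledges this restriction is needed precisely to bring Theorem~\ref{MainSL2} into play, so your route cannot recover the Fact in the generality in which it is asserted.
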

We now assume that $\alpha,\beta,\gamma\in\mathbb{R}_{>1}$ are 
algebraically
independent 
over $\mathbb{Q}$ and let $J:\triangle\rightarrow {\mathbb{H}}$ be the corresponding Schwarz triangle function as describe above. We obtain the full Ax-Schanuel Theorem:

\begin{thm}\label{AxSchanuelGenericTriangle}
Let $\hat t_1,\ldots,\hat t_n$ be distinct formal parameterizations of neighborhoods of points $p_1,\ldots, p_n$ in $\triangle$. Then
$${\rm tr.deg.}_{\mathbb{C}}\mathbb{C}(\hat t_1,J(\hat t_1),J'(\hat t_1),J''(\hat t_1),\ldots,\hat t_n,J(\hat t_n),J'(\hat t_n),J''(\hat t_n))\geq 3n+{\rm rank}\left(\frac{\partial \hat t_j}{\partial s_i}\right).$$
\end{thm}

\begin{proof} As with the proof of Theorem \ref{AxSchanuelHyperbolic}, all we have to do is prove that the projective structure is simple and gauge invariant. Simplicity follows from Fact \ref{BCFN-main}(1). Since $1, \alpha^{-1},\beta^{-1}, \gamma^{-1}$ are linearly independent over $\mathbb Q$ and $e(\mathbb{CP}_1 \setminus \{0,1, \infty \}) < 0$, Proposition \ref{general curves} gives gauge invariance. The problem is now reduced to the weak form of the Ax-Lindemann-Weierstrass Theorem which was established in Fact \ref{BCFN-main}(3).

\end{proof}

\begin{rem}
In \cite{BCFN}, the parameters $\alpha,\beta,\gamma$ where allowed to be arbitrary complex numbers. The reader can follow the proofs above and see that the proof are correct in this less geometrical context. However the function $J$ is no more a biholomorphism of a triangle onto the upper half plane but a solution of a Schwarzian equation defined on a simply connected open subset of $\mathbb{CP}_1\setminus\{0,1,\infty\} $. 
\end{rem}

\section{Ax-Schanuel for combined uniformizers}
\label{combinedexpj}
Our theorem may be applied to geometric structures of different nature on curves. For instance the exponential function is the uniformization of a $(\mathbb G_a(\mathbb C), \mathbb A(\mathbb C))$ structure on the curve $Y = \mathbb C\setminus\{0\}$ whose charts are determinations of the logarithm up to an additive constant. The groups are $G =\mathbb G_a(\mathbb C)$ and $B=\{e\}$ so that $G/B = \mathbb A(\mathbb C)$. Even if the additive group is not simple, the group ${\rm \mathbb G_a}(\mathbb C)^k \times {\rm PSL}_2(\mathbb C)^\ell$ is sparse so we can apply Theorem \ref{th:AxSh1} in this combined case.

For $i=1, \ldots, k$ consider a rational function $R_i$ on a curve $\hat Y_i$ and $Y_i$ the subset defined by $R_i\not = 0$ seen as a ramified covering of $\mathbb C$ with coordinate $z$. For any $i$, let $U_i \subset \mathbb{CP}_1$ be a Euclidean open subset with affine coordinate $t$ such that there exist an holomorphic $\hat e_i \in \mathcal O(U_i)$ solution of $ \frac{dz}{dt} = R_i(z)$ transcendental over $\mathbb C(t)$.   These are simple $(\mathbb G_a(\mathbb C), \mathbb A(\mathbb C))$-structures on $Y_i$.

Consider a simple gauge invariant $({\rm PSL}_2(\mathbb C) , \mathbb{CP}_1)$-structure on a curve $Y$ and let $\upsilon \in \mathcal O(U)$ be a uniformizer of this structure.

\begin{thm}\label{mixedThm}
Let $\hat t_1, \ldots \hat t_{k+\ell}$ be non constant formal power series in $\mathbb C [[s_1, \ldots s_m]]$ with $ \hat t_i(0) \in U_i$ for $i=1, \ldots, n$ and $\hat t_{k+1}(0), \ldots \hat t_{k+\ell}(0) \in U$.
If 
$${\rm tr.deg.}_{\mathbb C}\mathbb C \left(\hat t_1, \ldots \hat t_{k+\ell}, \hat e_1(\hat t_1), \ldots \hat e_k(\hat t_k), \upsilon (\hat t_{k+1}), \ldots \upsilon ''(\hat t_{k+\ell})\right) <  k + 3\ell + {\rm rank}\left( \frac{\partial \hat t_i}{\partial s_j}\right)$$ 
then 
\begin{itemize}
\item there exists $n\in \mathbb C^k\setminus\{0\}$ such that $\sum_{i=1}^k n_i \hat t_i \in \mathbb C(\hat e(\hat t_1), \ldots \hat e(\hat t_k))$ and ${\rm tr.deg.}_{\mathbb C}\mathbb C(\hat e(\hat t_1), \ldots \hat e(\hat t_k)) < k$; or 
\item 
there exist $k+1\leq i < j\leq k+\ell$ and $\gamma \in {\rm PSL}_2(\mathbb C)$ such that $\hat t_j = \gamma(\hat t_i)$ and ${\rm tr.deg.}_{\mathbb C} \mathbb C(\upsilon(\hat t_j), \upsilon(\hat t_i)) = 1$.

\end{itemize}
\end{thm}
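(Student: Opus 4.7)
The strategy is to apply Theorem~\ref{th:AxSh1} to the product of the $k$ additive geometric structures and the $\ell$ copies of the hyperbolic one, and then to split the resulting $\nabla$-special subvariety by a Goursat--Kolchin analysis of its Galois group.

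Form the product connection $\nabla=\nabla_1\times\cdots\times\nabla_{k+\ell}$ on the product bundle over $\tilde Y=Y_1\times\cdots\times Y_{k+\ell}$, with structure group $G=\mathbb{G}_a(\mathbb{C})^k\times\mathrm{PSL}_2(\mathbb{C})^\ell$. This $G$ is sparse: $\mathbb{G}_a^k$ is sparse because every linear subspace of its Lie algebra is algebraic, $\mathrm{PSL}_2^\ell$ is sparse because semisimple, and sparseness passes to direct products. Evaluating the combined uniformizer along the $\hat t_i$ gives a nonconstant formal parametrization
$$\hat p=\bigl((\hat e_i(\hat t_i),\hat t_i)_{i\le k},\,(\upsilon(\hat t_j),\upsilon'(\hat t_j),\upsilon''(\hat t_j),\hat t_j)_{k<j\le k+\ell}\bigr)$$
of a formal neighbourhood inside a horizontal leaf $\mathcal L$ of $\nabla$. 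The Zariski closure $V$ of its image has $\dim V$ equal to the transcendence degree on the left-hand side of the hypothesis, while $\mathrm{rk}(\hat p)=\mathrm{rank}(\partial\hat t_i/\partial s_j)$. The hypothesis thus reads $\dim V<\dim G+\mathrm{rk}(\hat p)$, so Corollary~\ref{FirstAx} produces a proper $\nabla$-special subvariety $X\subset\tilde Y$ containing the projection of $V$.

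Now reduce ${\rm Gal}(\nabla|_X)$. Replacing $X$ by a minimal such subvariety and applying Lemma~\ref{lm:projection}, we may arrange that ${\rm Gal}(\nabla|_X)$ projects dominantly onto every simple factor of $G$. The Goursat--Kolchin lemma then presents ${\rm Gal}(\nabla|_X)$ as the preimage of a family of algebraic-group isomorphisms between quotients of the factors modulo their (finite) centres. Because $\mathbb{G}_a$ is abelian whereas $\mathrm{PSL}_2(\mathbb{C})$ is perfect with finite centre, no such isomorphism can cross between the two kinds of factors, so the reduction must occur either entirely within $\mathbb{G}_a^k$ or entirely within $\mathrm{PSL}_2^\ell$. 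If it occurs in $\mathrm{PSL}_2^\ell$, projecting $X$ to a pair $k+1\le i<j\le k+\ell$ via Lemma~\ref{lm:projection} and invoking Theorem~\ref{MainSL2} gives a gauge correspondence between $\nabla_i$ and $\nabla_j$ over a correspondence $X_{ij}\subset Y\times Y$; running the argument of Theorem~\ref{Ax-Schaneul-curve} along $X_{ij}$ forces $\hat t_j=\gamma\hat t_i$ for some $\gamma\in\mathrm{PSL}_2(\mathbb{C})$ together with the required algebraic relation, yielding the second bullet. If the reduction occurs in $\mathbb{G}_a^k$, the image of ${\rm Gal}(\nabla|_X)$ lies in the hyperplane cut out by some $n\in\mathbb{C}^k\setminus\{0\}$; in the standard trivialisation of the $\mathbb{G}_a$-bundle attached to $dz/dt=R_i(z)$ the fibre coordinate is an antiderivative of $1/R_i$ along the solution, so this linear functional on the Lie algebra integrates to a relation $\sum_{i=1}^k n_i\hat t_i\in\mathbb{C}(\hat e_1(\hat t_1),\ldots,\hat e_k(\hat t_k))$, and the simultaneous drop of the Galois group on $Y_1\times\cdots\times Y_k$ yields ${\rm tr.deg.}_{\mathbb{C}}\mathbb{C}(\hat e_1(\hat t_1),\ldots,\hat e_k(\hat t_k))<k$, giving the first bullet.

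The main technical obstacle is the Goursat--Kolchin step, and more specifically the translation of a $\mathbb{C}$-linear reduction of the Galois group inside $\mathbb{G}_a^k$ into a genuine relation among the $\hat t_i$ modulo $\mathbb{C}(\hat e_1(\hat t_1),\ldots,\hat e_k(\hat t_k))$ (rather than a weaker identity involving higher jets); verifying that the additive-factor reduction simultaneously forces the claimed algebraic dependence among the $\hat e_i(\hat t_i)$ requires matching the dimension deficit of $V$ against the dimension of the additive part of the reduced Galois group, and is essentially a book-keeping version of Ax's original argument in the present bundle-theoretic formulation. Once these identifications are in place, the dichotomy between the two bullets follows directly from Theorems~\ref{th:AxSh1} and~\ref{MainSL2}.
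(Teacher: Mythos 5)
Your proposal follows essentially the same route as the paper: apply Theorem~\ref{th:AxSh1} (via Corollary~\ref{FirstAx}) to the product connection with structure group $\mathbb G_a(\mathbb C)^k\times{\rm PSL}_2(\mathbb C)^\ell$, then split the resulting Galois-group reduction by the Goursat--Kolchin lemma together with Lemma~\ref{lm:projection} into a reduction entirely within $\mathbb G_a^k$ or entirely within ${\rm PSL}_2^\ell$, handling the first by the Picard--Vessiot description of an additive drop and the second by Theorems~\ref{MainSL2} and~\ref{Ax-Schaneul-curve}. The paper's own proof is more terse at each stage (it does not explicitly verify sparseness, nor spell out why no Goursat pairing can cross between $\mathbb G_a$ and $\mathrm{PSL}_2$ factors), but the decomposition, the key lemmas invoked, and the dichotomy obtained are identical to yours.
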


\begin{proof}
Let $t_i$ be the coordinate on $\mathbb C$ and $e_i$ the coordinate on $Y_i$. The connection $\nabla_i$ on $\mathbb C \times Y_i \to Y_i$ given by $dt_i - \frac{de_i}{R_i(e_i)} = 0$ is a $(\mathbb C,+)$-principal connection.

Let $\nabla$ be the connection on ${\rm PSL}_2(\mathbb C) \times Y \to  Y$ associated to the Schwarzian equation, described in Section \ref{CurveCase}. It is a ${\rm PSL}_2(\mathbb C)$-principal connection.

We apply Theorem \ref{th:AxSh1} to the principal connection $(\prod \nabla_i) \times (\nabla)^{\ell}$ on $\left (\mathbb C ^k \times {\rm PSL}_2(\mathbb C) ^\ell \right) \times (\prod {Y_i}) \times {Y_2}^\ell \to  (\prod{Y_i}) \times {Y_2}^\ell$ defined for $v_1+v_2+\ldots v_{k+\ell} \in T_p((\prod{Y_i}) \times {Y_2}^\ell)$ by 
$$
\nabla_{v_1+v_2+\ldots v_{k+\ell}} = \sum_{i=1}^k (\nabla_i)_{v_i} + \sum_{i=k+1}^{k+\ell} (\nabla)_{v_i}.
$$ 

As the subset parameterized by 
$$
\left\{ \begin{array}{lcl} t_1 =\hat t_1(s), & \ldots ,& t_{k+l} = \hat t_{k+l}(s), \\

\dot t_{k+1} = \frac{1}{\upsilon'(\hat t_{k+1}(s))} ,& \ldots,& \ddot t_{k+l} = \frac{1}{\upsilon'(\hat t_{k+l}(s))}, \\

e_1= \hat e_1(\hat t_1(s)),& \ldots,& e_k = \hat e_k(\hat t_k(s)),\\ 

y_1 = \upsilon(\hat t_{k+1}(s)),&  \ldots, &y_{l} = \upsilon(\hat t_{k+l}(s))
\end{array}
\right.
$$

\noindent lies in a horizontal leaf of the connection and has a small enough Zariski closure $V$, we can use Theorem \ref{th:AxSh1} and get that the projection of $V$ in $(\prod {Y_i})\times {Y_2}^\ell$ is contained in a proper subvariety $Z$ (giving polynomial relation involving only the $e$'s and the $\upsilon$'s).

The Galois group of the restriction of the connection above a Zariski open subset $Z^\circ$ of $Z$ is a strict algebraic subgroup  $G \subset \mathbb G_a(\mathbb C)^k \times {\rm PSL}_2(\mathbb C)^\ell$ whose projections on each factors are onto by Galois correspondence.

By Goursat's lemma, the projection of $G$ in $\mathbb G_a(\mathbb C)^k $ or the projection in ${\rm PSL}_2(\mathbb C)^\ell$ is not onto. Applying Lemma \ref{lm:projection} we get that:
\begin{itemize}
    \item either the projection of $Z$ in $\prod Y_i$ is contained in a special subvariety, meaning that 
    
    \begin{itemize}
        \item it is a proper subvariety: ${\rm tr.deg.}_{\mathbb C}\mathbb C(\hat e(\hat t_1), \ldots \hat e(\hat t_k)) < k$,
\item  the Galois group of the restriction is given by a proper subvector space of $\mathbb C^k$: there exists $n\in \mathbb C^k\setminus\{0\}$ such that $\sum_{i=1}^k n_i \hat t_i \in \mathbb C(\hat e(\hat t_1), \ldots \hat e(\hat t_k))$;
       \end{itemize} 
    \item or the projection of $Z$ in $Y^\ell$ is contained in a special subvariety. Theorem \ref{Ax-Schanuel-curve} can be used to obtain the conclusion.
\end{itemize} 
\end{proof}

Using the original proof by Ax of the classical Ax-Schanuel Theorem, Theorem \ref{mixedThm} can be made more precise in the case of the exponential function and the uniformizer of a hyperbolic curve.

Let $Y$ by an hyperbolic curve  and denoted by  $j_{\Gamma}:\mathbb{H}_{\Gamma}\rightarrow \hat{Y}(\mathbb{C})$ its Fuchsian uniformization.

\begin{thm}[\bf Theorem E]\label{combined}
Let $\hat t_1,\ldots,\hat t_{k+\ell}$ be formal parameterizations (in variables $s=(s_1, \ldots, s_m)$) of neighborhoods of points in $\m H$. If 
$${\rm tr.deg.}_{\mathbb C}\mathbb C \left(\hat t_1, \ldots \hat t_{k+\ell}, \exp(\hat t_1), \ldots\exp(\hat t_{k}),  j_{\Gamma} (\hat t_{k+1}), \ldots j_{\Gamma} ''(\hat t_{k+\ell})\right) < k + 3\ell+{\rm rank}\left(\frac{\partial \hat t_j}{\partial s_i}\right)$$ 
then 
\begin{itemize}
\item there exists $n\in \mathbb Z^{k}\setminus \{0\}$ such that $\sum_{i=1}^k n_i \hat t_i \in \mathbb C$; or
\item there exist $k+1\leq i < j\leq k+\ell$ and $\gamma \in \text{Comm}(\Gamma)$ such that $\hat t_i = \gamma\hat t_j$.
\end{itemize}
\end{thm}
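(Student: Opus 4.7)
The plan is to proceed by contrapositive. Assume that both conclusions of Theorem~\ref{combined} fail: namely,
\begin{itemize}
\item[(A)] no $n\in\mathbb{Z}^k\setminus\{0\}$ satisfies $\sum_{i=1}^k n_i\hat t_i\in\mathbb{C}$, and
\item[(B)] no pair $k+1\le i<j\le k+\ell$ with $\gamma\in\text{Comm}(\Gamma)$ satisfies $\hat t_i=\gamma\hat t_j$.
\end{itemize}
I aim to contradict the transcendence hypothesis by invoking Theorem~\ref{mixedThm} and showing that neither of its alternatives can hold under (A) and (B). This is legitimate since the Schwarzian $(\mathrm{PSL}_2(\mathbb{C}),\mathbb{CP}_1)$-structure attached to $j_\Gamma$ is simple and its Galois group is full ($\Gamma$ being of the first kind is Zariski dense in $\mathrm{PSL}_2(\mathbb{C})$), so Theorem~\ref{mixedThm} applies with $\hat e_i=\exp$ and $\upsilon=j_\Gamma$ and furnishes one of the two alternatives.

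For Alternative~(1) I obtain $n\in\mathbb{C}^k\setminus\{0\}$ with $\sum n_i\hat t_i\in\mathbb{C}(\exp(\hat t_1),\ldots,\exp(\hat t_k))$ together with the crucial inequality $\text{tr.deg.}_{\mathbb{C}}\mathbb{C}(\exp(\hat t_1),\ldots,\exp(\hat t_k))<k$. Writing $r_1=\text{tr.deg.}_{\mathbb{C}}\mathbb{C}(\hat t_1,\ldots,\hat t_k)$, Ax's classical functional Schanuel theorem \cite{Ax71}, combined with (A), gives
\begin{equation*}
\text{tr.deg.}_{\mathbb{C}}\mathbb{C}(\hat t_i,\exp(\hat t_i):1\le i\le k)\ge k+r_1.
\end{equation*}
On the other hand, algebraic dependence of the $\exp(\hat t_i)$ over $\mathbb{C}$ is preserved upon extending scalars to $\mathbb{C}(\hat t_1,\ldots,\hat t_k)$, giving
\begin{equation*}
\text{tr.deg.}_{\mathbb{C}}\mathbb{C}(\hat t_i,\exp(\hat t_i):1\le i\le k)=r_1+\text{tr.deg.}_{\mathbb{C}(\hat t_i)}\mathbb{C}(\hat t_i,\exp(\hat t_i))\le r_1+(k-1).
\end{equation*}
These two inequalities together imply $0\le-1$, a contradiction. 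Hence Alternative~(1) cannot occur.

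For Alternative~(2) I obtain indices $k+1\le i<j\le k+\ell$ and $\gamma\in\mathrm{PSL}_2(\mathbb{C})$ with $\hat t_i=\gamma\hat t_j$ and $\text{tr.deg.}_{\mathbb{C}}\mathbb{C}(j_\Gamma(\hat t_i),j_\Gamma(\hat t_j))=1$. Equivalently, $j_\Gamma(\gamma z)$ is algebraic over $\mathbb{C}(j_\Gamma(z))$ in a neighborhood of $p_j$. The classical theorem on automorphic correspondences for Fuchsian groups of the first kind (compare the argument in the proof of \cite[Lemma 5.15]{CasFreNag}) identifies such algebraic relations with Hecke correspondences, parameterized by double cosets $\Gamma\gamma\Gamma$ with $\gamma\in\text{Comm}(\Gamma)$. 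Hence $\gamma\in\text{Comm}(\Gamma)$, contradicting (B), and this completes the proof.

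The main technical hinge is the refinement in Alternative~(2): while the exponential side is disposed of by a clean application of Ax's theorem with a surprisingly tight transcendence count, the upgrade from $\gamma\in\mathrm{PSL}_2(\mathbb{C})$ in Theorem~\ref{mixedThm} to $\gamma\in\text{Comm}(\Gamma)$ in Theorem~\ref{combined} relies on the classical commensurator characterization of algebraic correspondences of $\Gamma\backslash\mathbb{H}$, which is the point at which the arithmetic refinement enters the argument.
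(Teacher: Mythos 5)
Your overall structure matches the paper's — apply Theorem~\ref{mixedThm} with $\hat e_i=\exp$ and $\upsilon=j_\Gamma$, then upgrade each of its two alternatives to the arithmetic form demanded by Theorem~E. Your treatment of Alternative~(2) is in order: the upgrade from $\gamma\in\mathrm{PSL}_2(\mathbb{C})$ to $\gamma\in\mathrm{Comm}(\Gamma)$ is exactly the commensurator characterization of algebraic correspondences (the paper invokes Corollary~\ref{AxSchanuelHyperbolic}, but the underlying content is the same double-coset argument you cite).

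The treatment of Alternative~(1), however, has a genuine gap. You discard the functional relation $\sum_i n_i\hat t_i\in\mathbb{C}(\exp(\hat t_1),\ldots,\exp(\hat t_k))$ that Theorem~\ref{mixedThm} supplies, and try to close the argument from the transcendence drop alone. The inequality you attribute to Ax, namely
$\mathrm{tr.deg.}_{\mathbb{C}}\mathbb{C}(\hat t_i,\exp(\hat t_i):1\le i\le k)\ge k+r_1$ with $r_1=\mathrm{tr.deg.}_{\mathbb{C}}\mathbb{C}(\hat t_1,\ldots,\hat t_k)$, is not what Ax proves: his bound is $\ge k+\mathrm{rank}\bigl(\partial\hat t_j/\partial s_i\bigr)$, and the Jacobian rank $\rho$ satisfies $\rho\le r_1$ with strict inequality possible. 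For instance, $k=2$, $m=1$, $\hat t_1=s$, $\hat t_2=e^s$ gives $r_1=2$ but $\rho=1$, and $\mathrm{tr.deg.}_{\mathbb{C}}\mathbb{C}(s,e^s,e^{e^s})=3<4$, so your inequality fails. With the correct Ax bound, combining with your second estimate yields only $\rho\le r_1-1$, which is no contradiction. Even more to the point, the conclusion you need — that $\mathrm{tr.deg.}_{\mathbb{C}}\mathbb{C}(\exp(\hat t_1),\ldots,\exp(\hat t_k))<k$ forces a nontrivial $\mathbb{Z}$-linear relation $\sum n_i\hat t_i\in\mathbb{C}$ whenever the $\hat t_i$ are $\mathbb{Q}$-linearly independent mod $\mathbb{C}$ — is false in the abstract (take $\hat t_1=s$, $\hat t_2=\log(1+e^s)$: then $\exp(\hat t_2)=1+\exp(\hat t_1)$, yet no $\mathbb{Z}$-linear combination of $\hat t_1,\hat t_2$ is constant). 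What rescues the paper is precisely the functional relation you drop: writing $\sum n_i\hat t_i=f(\exp(\hat t_1),\ldots,\exp(\hat t_k))$, differentiating gives the identity $\sum n_i\,de_i/e_i=df$ of rational $1$-forms on the Zariski closure $V$ of $(\exp(\hat t_1),\ldots,\exp(\hat t_k))$, and Ax's Proposition~2 of \cite{Ax71} (a statement about exact $\mathbb{C}$-linear combinations of logarithmic differentials) is what converts the complex $n_i$ into integers and the relation into $\sum n_i\hat t_i\in\mathbb{C}$. You need to carry along and use the rational-functional relation from Theorem~\ref{mixedThm}, not just the dimension drop.
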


\begin{proof}
Applying Theorem \ref{combined} to the case where $\hat e_i=\exp$ for all $i=1,\ldots,k$ and $\upsilon=j_{\Gamma}$, we get that 
\begin{itemize}
\item there exists $n\in \mathbb C^{k}\setminus \{0\}$ such that 
$$\sum_{i=1}^{k} n_i \hat t_i \in \mathbb C\left(\exp(\hat t_1), \ldots \exp(\hat t_{k}))\right)\text{; or}$$ 
\item there exist $k+1\leq i < j\leq k+\ell$, $\gamma \in {\rm PGL}_2(\mathbb C)$ such that $\hat t_i = \gamma(\hat t_j)$ and  ${\rm tr.deg.}_{\mathbb C} \mathbb C(j_{\Gamma}(\hat t_j), j_{\Gamma}(\hat t_i)) = 1$. 
\end{itemize}

In the first case, we can use the original proof by Ax \cite{Ax71} of Ax-Schanuel theorem to conclude in the following way. On the Zariski closure $V \subset \mathbb C^k$ of the formally parameterized subspace $\exp(\hat t_1), \ldots \exp(\hat t_{k})$, we get $k$ rational functions $e_1, \ldots e_k$ and closed linear combination of logarithmic differential forms: $\sum_{i=1}^{k} n_i \frac{de_i}{e_i} = df$ with $f  \in \mathbb C(V)$. Applying \cite[Proposition 2]{Ax71}, we can assume $\sum_{i=1}^{k} n_i \frac{de_i}{e_i} = 0$ with $n \in \mathbb Z^n$. Then $\sum_{i=1}^{k} n_i t_i$ is constant on $V$.

In the second case, the $(i, j)$-projection of $P$ in ${\rm PSL}_2(\mathbb C)^2 \times Y^2$ gives a principal bundle on a curve in $Y^2$ for the subgroup $\{(h,\gamma h \gamma^-1): h \in {\rm PSL}_2(\mathbb C) \}$. Using Theorem \ref{AxSchanuelHyperbolic} it follows that for some $\gamma \in \text{Comm}(\Gamma)$  we have that $\hat t_i = \gamma\hat t_j$ as required.
\end{proof}

\end{document}